\newtheorem{theorem}{Theorem}[section]
\newtheorem{lemma}[theorem]{Lemma}
\newtheorem{corollary}[theorem]{Corollary}
\newtheorem{prop}[theorem]{Proposition}
\theoremstyle{definition}
\newtheorem{definition}[theorem]{Definition}
\newtheorem{example}[theorem]{Example}
\newtheorem{remark}[theorem]{Remark}
\newcommand{\Z}{\mathbb{Z}}
\newcommand{\Q}{\mathbb{Q}}
\newcommand{\R}{\mathbb{R}}
\newcommand{\C}{\mathbb{C}}
\newcommand{\QP}{\mathbb{QP}}
\newcommand{\CP}{\mathbb{CP}}
\newcommand{\RP}{\mathbb{RP}}
\newcommand{\bP}{\mathbb{P}}
\newcommand{\sV}{\mathsf{V}}
\newcommand{\sW}{\mathsf{W}}
\newcommand{\sE}{\mathsf{E}}
\newcommand{\G}{\Gamma}
\renewcommand{\k}{\Bbbk}
\newcommand{\RR}{\mathcal{R}}
\newcommand{\VV}{\mathcal{V}}
\newcommand{\A}{{\mathcal{A}}}
\newcommand{\CC}{{\mathcal{C}}}
\newcommand{\WW}{\mathcal{W}}
\DeclareMathOperator{\gr}{gr}
\DeclareMathOperator{\im}{im}
\DeclareMathOperator{\codim}{codim}
\DeclareMathOperator{\ab}{{ab}}
\DeclareMathOperator{\supp}{supp}
\DeclareMathOperator{\Hom}{{Hom}}
\DeclareMathOperator{\FF}{{F}}
\DeclareMathOperator{\FP}{{FP}}
\DeclareMathOperator{\init}{in}
\DeclareMathOperator{\lk}{lk}
\DeclareMathOperator{\rk}{rk}
\DeclareMathOperator{\Tors}{Tors}
\DeclareMathOperator{\Grass}{Gr}
\DeclareMathOperator{\TC}{TC}
\newcommand{\m}{\mathfrak{m}}
\newcommand{\PL}{\scriptscriptstyle{\rm PL}}
\newcommand{\wX}{\widetilde{X}}
\newcommand{\wG}{\widehat{G}}
\newcommand{\surj}{\twoheadrightarrow}
\newcommand{\inj}{\hookrightarrow}
\newcommand{\isom}{\xrightarrow{\simeq}}
\newcommand{\compl}{\scriptscriptstyle{\complement}}
\newcommand{\abs}[1]{\left| #1 \right|}
\def\set#1{{\{ #1\}}}
\newcommand{\bigmid}{\:\big|  \big.\:}
\newcommand{\pt}{\{\text{pt}\}}
\newcommand{\rat}{\mathcal{R}\Gamma_{\iota}}
\title[Finiteness in free abelian covers]%
{Geometric and homological finiteness in \\
free abelian covers}
\author[Alexander~I.~Suciu]{Alexander~I.~Suciu}
\address{Department of Mathematics,
Northeastern University,
Boston, MA 02115, USA}
\urladdr{http://www.math.neu.edu/\~{}suciu}
\email{a.suciu@neu.edu}
\thanks{Partially supported by NSA grant H98230-09-1-0021
and NSF grant DMS--1010298}
\subjclass[2010]{Primary 
20J05,  %% Homological methods in group theory
Secondary 
20F36,   %% Braid groups; Artin groups
32S22,  %% Relations with arrangements of hyperplanes
55N25,  %% Homology with local coefficients, equivariant cohomology
57M07.  %% Topological methods in group theory
}
\keywords{Bieri--Neumann--Strebel--Renz invariant, 
free abelian cover, Dwyer--Fried invariant, 
characteristic variety,  exponential tangent cone, 
resonance variety, toric complex, quasi-projective 
variety, configuration space, hyperplane arrangement.}
\begin{document}

\begin{abstract}
We describe some of the connections between  
the Bieri--Neumann--Strebel--Renz invariants,  
the Dwyer--Fried invariants, and the cohomology 
support loci of a space $X$. Under suitable 
hypotheses, the geometric and homological finiteness 
properties of regular, free abelian covers of $X$ 
can be expressed in terms of the resonance varieties, 
extracted from the cohomology ring of $X$. 
In general, though, translated components 
in the characteristic varieties affect the answer.
We illustrate this theory in the setting of toric 
complexes, as well as smooth, complex projective 
and quasi-projective varieties, with special emphasis 
on configuration spaces of Riemann surfaces and 
complements of hyperplane arrangements. 
\end{abstract}

\maketitle
\tableofcontents
\setcounter{tocdepth}{2} 

\section{Introduction}
\label{sect:intro}

\subsection{Finiteness properties}
\label{intro:fp}

This investigation is motivated by two seminal papers that 
appeared in 1987:  one by  Bieri, Neumann, and Strebel \cite{BNS}, 
and the other by Dwyer and Fried \cite{DF}.  Both papers  
dealt with certain finiteness properties of normal 
subgroups of a group (or regular covers 
of a  space), under the assumption that 
the factor group (or the group of deck transformations)  
is free abelian. 

In \cite{BNS}, Bieri, Neumann, and Strebel associate to every 
finitely generated group $G$ a subset  $\Sigma^1(G)$ of the unit 
sphere $S(G)$ in the real vector space $\Hom(G,\R)$. This 
``geometric'' invariant of the group $G$ is cut out of the sphere 
by open cones, and is independent of a finite generating set 
for $G$.  In  \cite{BR}, Bieri and Renz introduced a nested family 
of higher-order invariants, $\{\Sigma^i(G,\Z)\}_{i\ge 1}$, which 
record the finiteness properties of 
normal subgroups of $G$ with abelian quotients. 

In a recent paper \cite{FGS}, Farber, Geoghegan and Sch\"utz 
further extended these definitions. To each connected, 
finite-type CW-complex $X$, these authors assign a 
sequence of invariants, $\{\Sigma^i(X,\Z)\}_{i\ge 1}$, 
living in the unit sphere $S(X)\subset H^1(X,\R)$. 
The sphere $S(X)$ can be thought of as parametrizing 
all free abelian covers of $X$, while the $\Sigma$-invariants 
(which are again open subsets), keep track of the geometric 
finiteness properties of those covers. 

Another tack was taken by Dwyer and Fried in \cite{DF}.  
Instead of looking at all free abelian covers of $X$ at once, 
they fix the rank, say $r$, of the deck-transformation group, 
and view the resulting covers as being parametrized by 
the rational Grassmannian $\Grass_r(H^1(X,\Q))$. 
Inside this Grassmannian, they consider the subsets 
$\Omega^i_r(X)$, consisting of all covers for which 
the Betti numbers up to degree $i$ are finite, and 
show how to determine these sets in terms of the support 
varieties of the relevant Alexander invariants of $X$.  
Unlike the $\Sigma$-invariants, though, the $\Omega$-invariants 
need not be open subsets, see \cite{DF} and \cite{Su11}.

Our purpose in this note is to explore several connections 
between the geometric and homological invariants 
of a given space $X$, and use these connections to 
derive useful information about the rather mysterious 
$\Sigma$-invariants from concrete knowledge of the more 
accessible $\Omega$-invariants.

\subsection{Characteristic varieties and $\Omega$-sets}
\label{intro:cvs omega}

Let $G=\pi_1(X,x_0)$ be the fundamental group of $X$, 
and let $\wG=\Hom(G,\C^{\times})$ be the group of 
complex-valued characters on $G$, thought of as 
the parameter space for rank~$1$ local systems on $X$. 
The key tool for comparing the aforementioned 
invariants are the {\em characteristic varieties} $\VV^i(X)$, 
consisting of those characters $\rho \in \wG$ 
for which $H_j(X,\C_{\rho})\ne 0$, for some $j\le i$. 

Let $X^{\ab}$ be the universal abelian cover of $X$, 
with group of deck transformations $G_{\ab}$. 
We may view each homology group $H_j(X^{\ab},\C)$ 
as a finitely generated module over the Noetherian ring 
$\C{G_{\ab}}$.   As shown in \cite{PS-plms}, the variety 
$\VV^i(X)$ is the support locus for the direct sum of 
these modules, up to degree $i$. 
It follows then from the work of Dwyer and Fried \cite{DF}, 
as further reinterpreted in \cite{PS-plms, Su11}, that 
$\Omega^i_r(X)$ consists of those $r$-planes $P$ 
inside $H^1(X,\Q)$ for which the algebraic torus 
$\exp(P \otimes \C)$ intersects the variety $\VV^i(X)$ in 
only finitely many points.

Let $\WW^i(X)$ be the intersection of $\VV^i(X)$ with the 
identity component of $\wG$, and 
let $\tau_1(\WW^i(X))$ be the set of points $z\in H^1(X,\C)$ 
such that $\exp(\lambda z)$ belongs to $\WW^i(X)$, for all 
$\lambda\in \C$.  As noted in \cite{DPS-duke}, this set 
is a finite union of rationally defined subspaces.   
For each $r\ge 1$, we then have 
\begin{equation}
\label{eq:intro schubert}
\Omega^i_r(X) \subseteq  \Grass_r(H^1(X,\Q)) \setminus 
 \sigma_r(\tau_1^{\Q}(\WW^i(X)) ),
\end{equation}
where $\tau_1^{\Q}$ denotes the rational points on $\tau_1$, 
and $\sigma_r(V)$ denotes the variety of incident $r$-planes 
to a homogeneous subvariety $V\subset H^1(X,\Q)$, see \cite{Su11}.  
There are many classes of spaces for which inclusion 
\eqref{eq:intro schubert} holds as equality---for instance, 
toric complexes, or, more generally, the ``straight spaces" 
studied in \cite{Su-aspm}---but, in general, the inclusion is strict.

\subsection{Comparing the $\Omega$-sets and the $\Sigma$-sets}
\label{intro:cvs os}
A similar inclusion holds for the BNSR-invariants.  As shown 
in \cite{FGS},  the set $\Sigma^i(X, \Z)$ consists of those 
elements $\chi\in S(X)$ for which the homology of $X$ with 
coefficients in the Novikov-Sikorav completion $\widehat{\Z{G}}_{-\chi}$ 
vanishes, up to degree $i$. Using this interpretation, we 
showed in \cite{PS-plms} that the following inclusion holds:
\begin{equation}
\label{eq:intro sigma}
\Sigma^i(X, \Z)\subseteq S(X) \setminus 
S(\tau^{\R}_1(\WW^i(X))),
\end{equation}
where $\tau_1^{\R}$ denotes the real points on $\tau_1$, 
and $S(V)$ denotes the intersection of $S(X)$ with a 
homogeneous subvariety $V\subset H^1(X,\R)$.  
Again, there are several classes of spaces for which inclusion 
\eqref{eq:intro sigma} holds as equality---for instance, 
nilmanifolds, or compact K\"ahler manifolds without elliptic 
pencils with multiple fibers---but, in general, the inclusion is strict.

Clearly, formulas \eqref{eq:intro schubert} and \eqref{eq:intro sigma} 
hint at a connection between the Dwyer--Fried invariants and  
the Bieri--Neumann--Strebel--Renz invariants of a space $X$. 
We establish an explicit connection here, by comparing 
the conditions insuring those inclusions hold as equalities. 
Our main results reads as follows.

\begin{theorem} 
\label{thm:intro main}
Let $X$ be a connected CW-complex with finite $k$-skeleton.  
Suppose that, for some $i\le k$, 
\[
\Sigma^i(X,\Z) = S(X)\setminus S(\tau^{\R}_1 (\WW^i(X))).
\]
Then, for all $r\ge 1$,
\[
\Omega^i_r(X) = \Grass_r(H^1(X,\Q)) \setminus 
\sigma_r ( \tau^{\Q}_1 (\WW^i(X)) ).
\]
\end{theorem}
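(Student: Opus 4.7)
The plan is to establish the reverse of the inclusion \eqref{eq:intro schubert}: fix a rational $r$-plane $P\in\Grass_r(H^1(X,\Q))$ not incident to $\tau^{\Q}_1(\WW^i(X))$, and verify that the associated free abelian cover $X^{\nu}\to X$, classified by the epimorphism $\nu\colon G\twoheadrightarrow\Z^r$ dual to $P\hookrightarrow H^1(X,\Q)$, has finite Betti numbers through degree $i$.

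The first step is to rewrite the non-incidence condition on $P$ as a sphere-level containment. Since $\tau_1(\WW^i(X))$ is a finite union of rationally defined subspaces of $H^1(X,\C)$ by \cite{DPS-duke}, the assumption $P\cap\tau^{\Q}_1(\WW^i(X))=\{0\}$ is equivalent to $P_{\R}\cap\tau^{\R}_1(\WW^i(X))=\{0\}$. Intersecting with the unit sphere gives $S(P)\cap S(\tau^{\R}_1(\WW^i(X)))=\emptyset$, so the standing hypothesis of the theorem yields
\[
S(P)\ \subseteq\ S(X)\setminus S(\tau^{\R}_1(\WW^i(X)))\ =\ \Sigma^i(X,\Z).
\]

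The second step extracts Betti-number finiteness from this $\Sigma$-containment. Here I invoke the Bieri--Renz criterion, as formulated at the space level by Farber--Geoghegan--Sch\"utz \cite{FGS}, which identifies $S(P)\subseteq \Sigma^i(X,\Z)$ with a finite-type condition on the cover $X^{\nu}$ strong enough to force each integral homology group $H_j(X^{\nu},\Z)$ to be finitely generated for $j\le i$. Consequently $\dim_{\Q}H_j(X^{\nu},\Q)<\infty$ for $j\le i$, which is precisely the assertion that $P\in \Omega^i_r(X)$. Combined with the forward inclusion already recalled, this proves the claimed equality.

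The delicate point is the second step: one must invoke the correct version of the Bieri--Renz/Farber--Geoghegan--Sch\"utz theorem to bridge sphere-level $\Sigma$-containment with integral homological finiteness of the cover, and to do so with $\Z$ (rather than $\Q$) coefficients so that rational Betti numbers are controlled. The first step is softer, relying only on the rationality of the components of $\tau_1(\WW^i(X))$, and is essentially the same observation underlying the forward inclusion \eqref{eq:intro schubert}.
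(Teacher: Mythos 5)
Your argument is correct and follows essentially the same route as the paper's: the paper proves the same statement (Theorem \ref{thm:bns omega}) by contradiction, but the substance is identical --- the rationality of the components of $\tau_1(\WW^i(X))$ upgrades non-incidence of $P$ over $\Q$ to non-incidence over $\R$, and then the Farber--Geoghegan--Sch\"utz finite-type criterion (Proposition \ref{prop:fgs cw} together with Remark \ref{rem:ktype}) converts $S(P)\subseteq\Sigma^i(X,\Z)$ into finiteness of $b_j(X^{\nu})$ for $j\le i$, exactly as in Proposition \ref{prop:bns cv}. Your direct presentation is merely the contrapositive reorganization of the paper's proof, with the same key lemmas.
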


Simple examples show that, in general, the above implication 
cannot be reversed.

The main usefulness of Theorem \ref{thm:intro main} resides in 
the fact that it allows one to show that the $\Sigma$-invariants 
of a space $X$ are smaller than the upper bound given 
by \eqref{eq:intro sigma}, once one finds certain components 
in the characteristic varieties of $X$ (e.g., positive-dimensional 
translated subtori) insuring that the $\Omega$-invariants are 
smaller than the upper bound given by \eqref{eq:intro schubert}.

\subsection{Formality, straightness, and resonance}
\label{intro:res os}

The above method can still be quite complicated, in that 
it requires computing cohomology with coefficients in 
rank $1$ local systems on $X$.  As noted in \cite{PS-plms} 
and \cite{Su-aspm}, though, in favorable situations 
the right-hand sides of  \eqref{eq:intro schubert} 
and \eqref{eq:intro sigma} can be expressed in 
terms of ordinary cohomological data.  

By definition, the $i$-th {\em resonance variety}\/ of $X$, 
with coefficients in a field $\k$ of characteristic $0$, is the set 
$\RR^i(X,\k)$ of elements $a\in H^1(X,\k)$ for which the 
cochain complex whose terms are the cohomology groups 
$H^j(X,\k)$, and whose differentials are given by multiplication by 
$a$ fails to be exact in some degree $j\le i$.  It is readily 
seen that each of these sets is a homogeneous 
subvariety of $H^1(X,\k)$.  

If $X$ is $1$-formal (in the sense of rational homotopy theory), 
or, more generally, if $X$ is locally $1$-straight, then 
$\tau_1(\WW^1(X))=\TC_1(\WW^1(X))=\RR^1(X)$.  
Thus, formulas \eqref{eq:intro schubert} and 
\eqref{eq:intro sigma} yield the following inclusions:
\begin{align}
\label{eq:intro omegares}
\Omega^1_r(X) &\subseteq  \Grass_r(H^1(X,\Q)) \setminus 
\sigma_r (  \RR^1(X,\Q) ),
\\
\label{eq:intro sigmares}
\Sigma^1(X, \Z) & \subseteq S(X) \setminus 
S( \RR^1(X,\R)).
\end{align}

If $X$ is locally $k$-straight, then the analogue of 
\eqref{eq:intro omegares} holds in degrees $i\le k$, 
with equality if $X$ is $k$-straight.  In general, though, 
neither \eqref{eq:intro omegares} nor 
\eqref{eq:intro sigmares} is an equality.

Applying now Theorem \ref{thm:intro main}, we obtain the 
following corollary.

\begin{corollary} 
\label{cor:bnsrv intro}
Let $X$ be a locally $1$-straight space (for instance, a 
$1$-formal space). Suppose 
$\Sigma^1(X,\Z) = S(X)\setminus S(\RR^1(X,\R))$.  
Then, for all $r\ge 1$,
\[  
\Omega^1_r(X) = \Grass_r(H^1(X,\Q)) 
\setminus  \sigma_r (  \RR^1(X,\Q) ).
\] 
\end{corollary}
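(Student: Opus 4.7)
The strategy is to apply Theorem~\ref{thm:intro main} after translating the resonance-variety hypothesis into a statement about the exponential tangent cone. The only substantive input, beyond Theorem~\ref{thm:intro main} itself, is the identification $\tau_1(\WW^1(X)) = \RR^1(X)$ guaranteed by local $1$-straightness (and hence by $1$-formality), which is recalled in the discussion preceding the corollary.

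First I would rewrite the hypothesis using this identification. Since $X$ is locally $1$-straight, we have $\tau_1(\WW^1(X)) = \RR^1(X)$, so in particular $\tau_1^{\R}(\WW^1(X)) = \RR^1(X,\R)$. The assumption $\Sigma^1(X,\Z) = S(X)\setminus S(\RR^1(X,\R))$ therefore reads
\[
\Sigma^1(X,\Z) = S(X)\setminus S(\tau_1^{\R}(\WW^1(X))),
\]
which is precisely the hypothesis of Theorem~\ref{thm:intro main} with $i = 1$. Note that the finiteness-of-$k$-skeleton condition in that theorem is automatic here in degree~$1$, as soon as $X$ has finite $2$-skeleton, which is part of the standing finite-type assumption used throughout.

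Next I would invoke Theorem~\ref{thm:intro main} to conclude that for every $r\ge 1$,
\[
\Omega^1_r(X) = \Grass_r(H^1(X,\Q)) \setminus \sigma_r(\tau_1^{\Q}(\WW^1(X))).
\]
Finally, taking rational points in the identity $\tau_1(\WW^1(X)) = \RR^1(X)$ yields $\tau_1^{\Q}(\WW^1(X)) = \RR^1(X,\Q)$, and substituting this into the above equality gives the desired conclusion
\[
\Omega^1_r(X) = \Grass_r(H^1(X,\Q)) \setminus \sigma_r(\RR^1(X,\Q)).
\]

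There is essentially no obstacle: the proof is a two-line deduction from Theorem~\ref{thm:intro main}, with local $1$-straightness serving only to interchange the two descriptions of $\tau_1(\WW^1(X))$. The only point requiring minimal care is the passage between rational and real points of the tangent cone, which is harmless because $\tau_1(\WW^1(X))$ is, under the straightness hypothesis, a union of \emph{rationally defined} subspaces, so the operations $S(-)$ on the real side and $\sigma_r(-)$ on the rational side are applied to the same underlying linear subspace arrangement.
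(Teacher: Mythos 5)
Your proof is correct and matches the paper's own argument: the paper proves this (as Corollary~\ref{cor:bns rv locstraight}) by citing Theorem~\ref{thm:bns omega} (the restatement of Theorem~\ref{thm:intro main}) together with Theorem~\ref{thm:rat res}, which supplies exactly the identification $\tau_1(\WW^1(X))=\RR^1(X)$ that you use to translate the hypothesis and conclusion. Your added remark about passing between real and rational points is the content of Theorem~\ref{thm:rat res}\eqref{rs2}, so nothing is missing.
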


\subsection{Applications}
\label{intro:apps}

We illustrate the theory outlined above with several classes 
of examples, coming from toric topology and algebraic geometry, 
as well as the study of configuration spaces and hyperplane 
arrangements. 

\subsubsection{Toric complexes}
Every simplicial complex $L$ on $n$ vertices determines 
a subcomplex $T_L$ of the $n$-torus, with  
$k$-cells corresponding to the $(k-1)$-simplices of $L$.   
The fundamental group $\pi_1(T_L)$ is the
right-angled Artin group $G_{L}$ 
attached to the $1$-skeleton of $L$, 
while a classifying space for $G_{L}$ is the toric 
complex associated to the flag complex $\Delta_{L}$. 

It is known that all toric complexes are both straight 
and formal; their characteristic and resonance varieties 
 were computed in \cite{PS-adv},  
whereas the $\Sigma$-invariants of right-angled Artin 
groups were computed in \cite{MMV, BG}.  
These computations, as well as work 
from \cite{PS-plms, Su-aspm} show that 
$\Omega^1_r(T_L) =  \sigma_r (  \RR^i(T_L,\Q) )^{\compl}$ and 
$\Sigma^i(G_L,\Z) \subseteq S(\RR^i(G_L,\R))^{\compl}$, 
though this last inclusion may be strict, unless 
a certain torsion-freeness assumption on the subcomplexes 
of $\Delta_L$ is satisfied. 

\subsubsection{Quasi-projective varieties}
The basic structure of the characteristic varieties of smooth, 
complex quasi-projective varieties was determined by 
Arapura \cite{Ar}, building on work of Beauville, 
Green and Lazarsfeld, and others.  In particular, 
if $X$ is such a variety, then all the components 
of $\WW^1(X)$ are torsion-translated subtori. 

If $X$ is also $1$-formal (e.g., if $X$ is compact), 
then inclusions \eqref{eq:intro omegares} and 
\eqref{eq:intro sigmares} hold, but not always as equalities. 
For instance, if $\WW^1(X)$ has a $1$-dimensional 
component not passing through $1$, and $\RR^1(X,\C)$ has 
no codimension-$1$ components, then, as shown in \cite{Su-aspm},  
inclusion \eqref{eq:intro omegares} is strict for $r=2$, 
and thus, by Corollary \ref{cor:bnsrv intro}, 
inclusion \eqref{eq:intro sigmares} is also strict. 

\subsubsection{Configuration spaces}
An interesting class of quasi-projective varieties 
is provided by the configuration spaces $X=F(\Sigma_g,n)$ 
of $n$ ordered points on a closed Riemann surface of genus $g$.  
If $g=1$ and $n\ge 3$, then the resonance variety $\RR^1(X,\C)$ 
is irreducible and non-linear, and so $X$ is not $1$-formal, 
by \cite{DPS-duke}.    To illustrate the computation of the 
$\Omega$-sets and $\Sigma$-sets in such a non-formal 
setting, we work out the details for $F(\Sigma_1,3)$. 

\subsubsection{Hyperplane arrangements} 
Given a finite collection of hyperplanes, $\A$, in a 
complex vector space $\C^{\ell}$, the complement 
$X(\A)$ is a smooth, quasi-projective variety; moreover, 
$X(\A)$ is formal, locally straight, but not always straight.
Arrangements of hyperplanes have been the main driving 
force behind the development of the theory of cohomology 
jump loci, and still provide a rich source of motivational 
examples for this theory.   Much is known about the characteristic 
and resonance varieties of arrangement complements; 
in particular, $\RR^1(X(\A),\C)$ admits a purely combinatorial 
description, owing to the pioneering work of Falk \cite{Fa97}, 
as sharpened by many others since then. We give here 
both lower bounds and upper bounds for the $\Omega$-invariants  
and the $\Sigma$-invariants of arrangements. 

In \cite{Su-aspm}, we gave an example of an  
arrangement $\A$ for which the Dwyer--Fried set 
$\Omega^1_2(X(\A))$ is strictly contained in 
$\sigma_2 (  \RR^1(X(\A),\Q) )^{\compl}$. 
Using Corollary \ref{cor:bnsrv intro}, we show here that 
the BNS invariant $\Sigma^1(X(\A),\Z)$ is strictly contained in 
$S(\RR^1(X(\A),\R))^{\compl}$.  This answers a question first 
raised at an Oberwolfach Mini-Workshop \cite{FSY}, and revisited  
in \cite{PS-plms, Su-conm}. 

\section{Characteristic and resonance varieties}
\label{sec:cvs}

We start with a brief review of the characteristic 
varieties of a space, and their relation to the 
resonance varieties, via two kinds of tangent 
cone constructions.

\subsection{Jump loci for twisted homology}
\label{subsec:jumps} 
Let $X$ be a connected CW-complex with finite $k$-skeleton, 
for some $k\ge 1$.  Without loss of generality, we may assume $X$ 
has a single $0$-cell, call it $x_0$.  Let $G=\pi_1(X,x_0)$ 
be the fundamental group of $X$, and let $\wG=\Hom(G,\C^{\times})$ 
be the group of complex characters of $G$.  Clearly, 
$\wG=\wG_{\ab}$, where $G_{\ab}=H_1(X,\Z)$ is the 
abelianization of $G$.  The universal coefficient 
theorem allows us to identify $\wG = H^1(X,\C^{\times})$. 

Each character $\rho\colon G\to \C^{\times}$ determines 
a rank~$1$ local system, $\C_{\rho}$, on our space $X$.  
Computing the homology groups of $X$ with coefficients 
in such local systems carves out some interesting subsets 
of the character group. 

\begin{definition}
\label{def:cvs}
The {\em characteristic varieties}\/ of $X$ are the sets 
\begin{equation}
\label{eq:cvs}
\VV^i_d(X)=\set{\rho \in H^1(X,\C^{\times})  
\mid \dim_{\C} H_i(X,\C_{\rho})\ge d}.
\end{equation}
\end{definition}

Clearly, $1\in \VV^i_d(X)$ if and only if $d\le b_i(X)$. 
In degree $0$, we have $\VV^0_1(X)= \{ 1\}$ 
and $\VV^0_d (X)=\emptyset$, for $d>1$. 
In degree $1$, the sets $\VV^i_d(X)$ depend only 
on the group $G=\pi_1(X,x_0)$---in fact, only on its 
maximal metabelian quotient, $G/G''$.

For the purpose of computing the characteristic varieties 
up to degree $i=k$, we may assume without loss of generality 
that $X$ is a finite CW-complex of dimension $k+1$, 
see \cite{PS-plms}.  With that in mind, it can be shown 
that the jump loci $\VV^i_d(X)$ are Zariski closed subsets 
of the algebraic group $H^1(X,\C^{\times})$, and that they 
depend only on the homotopy type of $X$.  
For details and further references, see \cite{Su11}.

One may extend the definition of characteristic varieties 
to arbitrary fields $\k$. The resulting varieties, $\VV^i_d(X,\k)$, 
behave well under field extensions: if $\k\subseteq \mathbb{K}$, 
then $\VV^i_d(X,\k)=\VV^i_d(X,\mathbb{K}) \cap H^1(X,\k^{\times})$. 

Most important for us are the depth one characteristic 
varieties, $\VV^i_1(X)$, and their unions up to a fixed 
degree, $\VV^i(X)=\bigcup_{j=0}^{i} \VV^j_1(X)$. 
These varieties yield an ascending filtration of 
the character group, 
\begin{equation}
\label{eq:filt asc}
\{1\}=\VV^0(X) \subseteq \VV^1(X) \subseteq  \cdots 
\subseteq \VV^k(X) \subseteq \wG.
\end{equation}

Now let $\wG^{\circ}$ be the identity 
component of the character group $\wG$. Writing 
$n=b_1(X)$, we may identify $\wG^{\circ}$ with the 
complex algebraic torus $(\C^{\times})^n$. 
Set 
\begin{equation}
\label{eq:wi}
\WW^i(X)=\VV^i(X)\cap \wG^{\circ}.  
\end{equation}
These varieties yield an ascending filtration of 
the complex algebraic torus $\wG^{\circ}$,
\begin{equation}
\label{eq:w filt asc}
\{1\}=\WW^0(X) \subseteq \WW^1(X) \subseteq  \cdots 
\subseteq \WW^k(X) \subseteq (\C^{\times})^n.
\end{equation}

The characteristic varieties behave well with respect to 
direct products.  For instance, suppose both $X_1$ and $X_2$ 
have finite $k$-skeleton.  Then, by \cite{PS-plms} 
(see also \cite{Su11}), we have that 
\begin{equation}
\label{eq:wprod}
\WW^i(X_1\times X_2)= 
\bigcup_{p+q=i} \WW^{p}(X_1) \times \WW^{q}(X_2),
\end{equation}
for all $i\le k$.

\subsection{Tangent cones and exponential tangent cones}
\label{subsec:tcone}

Let $W\subset (\C^{\times})^n$ be a Zariski closed subset, 
defined by an ideal $I$ in the Laurent polynomial ring   
$\C[t_1^{\pm 1},\dots , t_n^{\pm 1}]$.   
Picking a finite generating set for $I$, and multiplying 
these generators with suitable monomials if necessary, 
we see that $W$ may also be defined by the ideal $I\cap R$ 
in the polynomial ring $R=\C[t_1,\dots,t_n]$.  
Finally, let $J$ be the ideal  in the polynomial ring 
$S=\C[z_1,\dots, z_n]$, generated by the polynomials 
$g(z_1,\dots, z_n)=f(z_1+1, \dots , z_n+1)$, 
for all $f\in I\cap R$. 

\begin{definition}
\label{def:tcone}
The {\em tangent cone}\/ of $W$ at $1$ is the algebraic 
subset $\TC_1(W)\subset \C^n$ defined by the ideal 
$\init(J)\subset S$ generated by the initial forms of 
all non-zero elements from $J$.  
\end{definition}

The tangent cone $\TC_1(W)$ is a homogeneous 
subvariety of $\C^n$, which depends only on the analytic 
germ of $W$ at the identity.  In particular, 
$\TC_1(W)\ne \emptyset$ if and only if $1\in W$.  
Moreover, $\TC_1$ commutes with finite unions.

The following, related notion was introduced in \cite{DPS-duke}, 
and further studied in \cite{PS-plms} and \cite{Su11}.  

\begin{definition}
\label{def:exp tcone}
The {\em exponential tangent cone}\/ of $W$ at $1$ is 
the homogeneous subvariety $\tau_1(W)$ of $\C^n$, 
defined by 
\begin{equation}
\label{eq:tau1}
\tau_1(W)= \{ z\in \C^n \mid \exp(\lambda z)\in W,\ 
\text{for all $\lambda\in \C$} \}.
\end{equation}
\end{definition}

Again, $\tau_1(W)$ depends only on the analytic germ 
of $W$ at the identity,  and so $\tau_1(W)\ne \emptyset$ 
if and only if $1\in W$.  
Moreover, $\tau_1$ commutes with finite unions, as  
well as arbitrary intersections.  The most important 
properties of this construction are summarized in 
the following result from \cite{DPS-duke} (see also 
\cite{PS-plms} and \cite{Su11}).

\begin{theorem}
\label{thm:tau1}
For every Zariski closed subset $W\subset (\C^{\times})^n$, 
the following hold:
\begin{enumerate}
\item \label{t1}
$\tau_1(W)$ is a finite union 
of rationally defined linear subspaces of $\C^n$.
\item \label{t2}
$\tau_1(W)\subseteq \TC_1(W)$.
\end{enumerate}
\end{theorem}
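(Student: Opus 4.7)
\textbf{Proof plan for Theorem \ref{thm:tau1}.} The two parts have essentially independent arguments, and the plan is to present them in sequence. For part (2), I would use the analytic curve $\lambda\mapsto\exp(\lambda z)$ through the identity together with a one-variable Taylor expansion; for part (1), I would run a finite combinatorial analysis of when a sum $\sum c_\alpha\exp(\lambda\langle\alpha,z\rangle)$ vanishes identically in $\lambda$. Part (1) is the more substantial step, and I expect it to hold the main difficulty.

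For part (1), I would first reduce to the case $W=V(f)$ for a single Laurent polynomial $f=\sum_{\alpha\in A}c_\alpha t^{\alpha}$, where $A\subset\Z^n$ is finite and every $c_\alpha\ne 0$; this is legitimate because $W$ is cut out by finitely many such $f$ and $\tau_1$ manifestly commutes with finite intersections. Evaluating along $\exp(\lambda z)$ gives
\[
f(\exp(\lambda z))=\sum_{\alpha\in A}c_\alpha\exp\!\bigl(\lambda\langle\alpha,z\rangle\bigr).
\]
Grouping indices by the level sets of $\alpha\mapsto\langle\alpha,z\rangle$ yields a partition $P(z)$ of $A$, and by linear independence of distinct complex exponentials in $\lambda$, the displayed function vanishes identically if and only if the sum of the $c_\alpha$ over each block of $P(z)$ is zero. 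Since $A$ is finite, the set $\mathcal{P}(f)$ of partitions of $A$ whose block sums all vanish is finite. For each $P\in\mathcal{P}(f)$ I would introduce the rationally defined linear subspace
\[
L_P=\{\,z\in\C^n:\langle\alpha-\beta,z\rangle=0\text{ whenever }\alpha,\beta\text{ lie in a common block of }P\,\},
\]
and then argue that $\tau_1(V(f))=\bigcup_{P\in\mathcal{P}(f)}L_P$: the inclusion $\subseteq$ is the observation above applied to $P=P(z)$, while $\supseteq$ holds because for $z\in L_P$ the partition $P(z)$ is coarser than $P$, so its block sums are sums of $P$-block sums and hence still vanish. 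Distributing intersections over unions across the finitely many defining $f$'s of $W$ then expresses $\tau_1(W)$ as a finite union of rationally defined linear subspaces.

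For part (2), take $z\in\tau_1(W)$ and let $J\subset S=\C[z_1,\dots,z_n]$ be the ideal from Definition \ref{def:tcone}, so that $V(J)$ is the translate of $W$ placing the identity at the origin. The analytic curve $\gamma(\lambda)=\exp(\lambda z)-1$ satisfies $\gamma(\lambda)=\lambda z+O(\lambda^2)$. For any $g\in J$ with initial form $g_d=\init(g)$ of degree $d$, Taylor expansion gives
\[
g(\gamma(\lambda))=\lambda^{d}g_d(z)+O(\lambda^{d+1}),
\]
and the left-hand side vanishes for all $\lambda$ by hypothesis; therefore $g_d(z)=0$. Hence $z$ lies in the vanishing locus of $\init(J)$, which is $\TC_1(W)$.

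The main obstacle is in part (1): the argument relies crucially on invoking linear independence of complex exponentials correctly, which requires the exponents $\langle\alpha,z\rangle$ to be pairwise distinct and therefore forces the level-set grouping; and then on verifying that the partition-indexed subspaces $L_P$ capture $\tau_1(V(f))$ \emph{exactly}, rather than in only one direction. Once this combinatorial bookkeeping is in place, rationality of the $L_P$ is automatic because their defining linear functionals come from $\Z^n\subset\Q^n$, and part (2) follows by the short self-contained Taylor argument sketched above.
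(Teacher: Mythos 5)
Your proposal is correct, and it is essentially the standard argument: the paper itself states Theorem \ref{thm:tau1} without proof, citing \cite{DPS-duke}, and the proof there proceeds exactly as you do --- reducing to a single Laurent polynomial, using linear independence of the exponentials $e^{\lambda\langle\alpha,z\rangle}$ to characterize vanishing via partitions of the support with zero block sums, and extracting the lowest-order Taylor coefficient of $g(\exp(\lambda z)-1)$ for the inclusion into $\TC_1(W)$. No gaps.
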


If $W$ is an algebraic subtorus of $(\C^{\times})^n$, then 
$\tau_1(W)=\TC_1(W)$, and both types of tangent cones 
coincide with the tangent space at the origin, $T_1(W)$; 
moreover, $W=\exp( \tau_1(W) )$ in this case.   
More generally, if all positive-dimensional 
components of $W$ are algebraic subtori, then 
$\tau_1(W)=\TC_1(W)$.  In general, though, the 
inclusion from Theorem \ref{thm:tau1}\eqref{t2} can be strict.  

For brevity, we shall write $\tau_1^{\Q}(W)=\Q^n \cap \tau_1(W)$ 
for the rational points on the exponential tangent cone, and 
$\tau_1^{\R}(W)=\R^n \cap \tau_1(W)$ for the real points.

The main example we have in mind is that of the characteristic 
varieties $\WW^i(X)$, viewed as Zariski closed subsets of 
the algebraic torus $H^1(X,\C^{\times})^{\circ}=(\C^{\times})^n$, 
where $n=b_1(X)$.  
By Theorem  \ref{thm:tau1}, the exponential tangent cone 
to $\WW^i(X)$ can be written as a union of rationally defined 
linear subspaces, 
\begin{equation}
\label{eq:dfxi}
\tau_1( \WW^i(X)) =  
\bigcup_{L\in \CC_i(X)} L\otimes \C.   
\end{equation}
We call the resulting rational subspace arrangement, $\CC_i(X)$, 
the  {\em $i$-th characteristic arrangement}\/ of $X$; evidently, 
$\tau_1^{\Q}(\WW^i(X))$ is the union of this arrangement.

\subsection{Resonance varieties}
\label{subsec:rv}
Now consider the cohomology algebra $A=H^* (X,\C)$, with 
graded ranks the Betti numbers $b_i= \dim_{\C} A^i$. 
For each $a\in A^1$, we have $a^2=0$, by graded-commutativity 
of the cup product. Thus, right-multiplication by $a$ defines a 
cochain complex,
\begin{equation}
\label{eq:aomoto}
\xymatrix{(A , \cdot a)\colon  \ 
A^0\ar^(.66){a}[r] & A^1\ar^{a}[r] & A^2  \ar[r]& \cdots},
\end{equation}
The jump loci for the cohomology of this complex define a 
natural filtration of the affine space $A^1=H^1(X,\C)$. 

\begin{definition}
\label{def:rvs}
The {\em resonance varieties}\/ of $X$ are the sets 
\begin{equation}
\label{eq:rvs}
\RR^i_d(X)=\{a \in A^1 \mid \dim_{\C} 
H^i(A,a) \ge  d\}. 
\end{equation}
\end{definition}

For the purpose of computing the resonance varieties 
in degrees $i\le k$, we may assume without loss of 
generality that $X$ is a finite CW-complex of dimension $k+1$.  
The sets $\RR^i_d(X)$, then, are homogeneous, 
Zariski  closed subsets of $A^1=\C^n$, where $n=b_1$.  
In each degree $i\le k$, the resonance varieties provide 
a descending filtration,
\begin{equation}
\label{eq:res filt}
H^1(X,\C) \supseteq \RR^i_1(X) \supseteq \cdots 
\supseteq \RR^i_{b_i}(X) \supseteq \RR^i_{b_{i}+1}(X)=\emptyset.
\end{equation} 

Note that, if $A^i=0$, then $\RR^i_d(X)=\emptyset$, for all $d>0$. 
In degree $0$, we have $\RR^0_1(X)= \{ 0\}$,
and $\RR^0_d(X)= \emptyset$, for $d>1$. In degree $1$, 
the varieties $\RR^1_d(X)$ depend only on the fundamental 
group $G=\pi_1(X,x_0)$---in fact, only on the cup-product map 
$\cup \colon H^1(G,\C) \wedge H^1(G,\C) \to H^2(G,\C)$. 

One may extend the definition of resonance varieties 
to arbitrary fields $\k$, with the proviso that $H_1(X,\Z)$ 
should be torsion-free, if $\k$ has characteristic $2$. 
The resulting varieties, $\RR^i_d(X,\k)$, behave well 
under field extensions:  if $\k\subseteq \mathbb{K}$, then
$\RR^i_d(X,\k)=\RR^i_d(X,\mathbb{K}) \cap H^1(X,\k)$. 
In particular, $\RR^i_d(X,\Q)$ is just the set of rational 
points on the integrally defined variety 
$\RR^i_d(X)=\RR^i_d(X,\C)$. 

Most important for us are the depth-$1$ resonance varieties, 
$\RR^i_1(X)$, and their unions up to a fixed degree, 
$\RR^i(X)=\bigcup_{j=0}^{i} \RR^j_1(X)$. The latter 
varieties can be written as
\begin{equation}
\label{eq:depth1 res}
\RR^i(X)=\{a \in A^1 \mid 
 H^j(A,\cdot a) \ne 0, \text{ for some $j\le i$}\}. 
\end{equation}
These algebraic sets provide an ascending filtration of the 
first cohomology group, 
\begin{equation}
\label{eq:filt upres}
\{0\}=\RR^0(X) \subseteq \RR^1(X) \subseteq  \cdots \subseteq 
\RR^k(X) \subseteq H^1(X,\C)=\C^n.
\end{equation}

As noted in \cite{PS-plms}, the resonance varieties also 
behave well with respect to direct products: 
if both $X_1$ and $X_2$ have finite $k$-skeleton, then, 
for all $i\le k$,
\begin{equation}
\label{eq:rprod}
\RR^i(X_1\times X_2)= 
\bigcup_{p+q=i} \RR^{p}(X_1) \times \RR^{q}(X_2).
\end{equation}

\subsection{Tangent cone and resonance}
\label{subsec:tcone res}

An important feature of the theory of cohomology jumping 
loci is the relationship between characteristic and 
resonance varieties, based on the tangent cone 
construction. A foundational result in this direction 
is the following theorem of Libgober \cite{Li02}, 
which generalizes an earlier result of Green 
and Lazarsfeld \cite{GL87}. 

\begin{theorem}
\label{thm:lib}
Let $X$ be a connected CW-complex with finite $k$-skeleton. 
Then, for all $i\le k$ and $d>0$, the tangent cone at $1$ to 
$\WW_d^i(X)$ is included in $\RR_d^i(X)$. 
\end{theorem}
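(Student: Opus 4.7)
The strategy is to realize $\WW^i_d(X)$ as a determinantal subscheme of the torus $\wG^\circ=(\C^\times)^n$, linearize its defining ideal at the identity, and identify the linearization with the Aomoto complex defining $\RR^i_d(X)$. Without loss of generality, $X$ is a finite CW-complex of dimension at most $k+1$. Set $n=b_1(X)$, let $\Lambda=\C[t_1^{\pm 1},\dots,t_n^{\pm 1}]$ be the coordinate ring of $\wG^\circ$, and let $(C^\bullet,\partial)$ be the cellular cochain complex of the maximal free abelian cover of $X$, a bounded complex of finitely generated free $\Lambda$-modules satisfying $H^\ast(X,\C_\rho)=H^\ast(C^\bullet\otimes_\Lambda\C_\rho)$ for every $\rho\in\wG^\circ$. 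Then $\WW^i_d(X)$ is cut out in $\wG^\circ$ by the determinantal ideal $I^i_d\subset\Lambda$ encoding the rank inequality
\[
\rk\partial^{i-1}(\rho)+\rk\partial^i(\rho)\le\rk_\Lambda C^i - d.
\]

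In the local coordinates $z_j=t_j-1$ at $1\in\wG^\circ$, expand $\partial^i=\partial^i_{(0)}+\partial^i_{(1)}+\partial^i_{(2)}+\cdots$, with $\partial^i_{(\ell)}$ homogeneous of degree $\ell$ in the $z_j$'s. The constant term $\partial^i_{(0)}$ is the cellular coboundary of $X$ with constant $\C$-coefficients, whose cohomology is $A=H^\ast(X,\C)$. A standard computation involving first-order deformations of the trivial rank-one local system (the connecting homomorphism for such a deformation is cup product with the class in $H^1$ describing it) identifies the map induced on $A$ by $\partial^i_{(1)}$ with multiplication by $a:=\sum_j z_j e_j\in A^1$, where $\{e_j\}\subset A^1$ is the basis dual to $\{z_j\}$. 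Hence the linearization of $(C^\bullet,\partial)$ at $1$ in direction $a$ is exactly the Aomoto complex $(A,\cdot a)$.

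To conclude, let $a\in\TC_1(\WW^i_d(X))$. By the curve selection lemma, there is an analytic arc $\gamma:(\C,0)\to(\WW^i_d(X),1)$ with $\gamma(\lambda)=1+\lambda a+O(\lambda^2)$, so $\dim H^i(X,\C_{\gamma(\lambda)})\ge d$ for all $0<|\lambda|\ll 1$. Pulling back $C^\bullet$ along $\gamma$ produces a bounded complex $C^\bullet_R$ of finitely generated free modules over $R=\C[[\lambda]]$ whose generic rank $\rk_R H^i(C^\bullet_R)$ is therefore at least $d$. The $\lambda$-adic filtration on $C^\bullet_R$ yields a spectral sequence whose $E_1$-page with first differential is the Aomoto complex $(A,\cdot a)$ — higher-order terms in the Taylor expansion of $\gamma$ cannot contribute to $E_1$ — so $E_2=H^\ast(A,\cdot a)$. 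Since $E_\infty$ is a subquotient of $E_2$ and dominates the $R$-rank of $H^i(C^\bullet_R)$, one obtains $\dim_\C H^i(A,\cdot a)\ge\rk_R H^i(C^\bullet_R)\ge d$, whence $a\in\RR^i_d(X)$.

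The main obstacle is the rank comparison at the end: one must carefully isolate the torsion part of $H^i(C^\bullet_R)$ from its free part, verify convergence of the $\lambda$-adic spectral sequence in the presence of such torsion, and confirm that the generic $R$-rank of $H^i$ is indeed bounded above by $\dim_\C H^i(A,\cdot a)$. Libgober's original approach achieves this by working directly with Fitting ideals, showing that the initial forms at $1$ of the determinantal generators of $I^i_d$ lie in the determinantal ideal defining $\RR^i_d(X)$, which bypasses explicit spectral-sequence bookkeeping at the cost of more determinantal computations.
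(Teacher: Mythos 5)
The paper does not actually prove Theorem \ref{thm:lib}: it is quoted from Libgober \cite{Li02} (as a generalization of Green--Lazarsfeld \cite{GL87}), so there is no internal proof to compare against. Judged on its own, your argument is a sound reconstruction of the standard deformation-theoretic proof. The two essential inputs --- that $\WW^i_d(X)$ is a determinantal locus of the $\Lambda$-cochain complex of the maximal abelian cover, and that the first-order term of the twisted differential at $1$ induces the Aomoto differential $\cdot\, a$ on $A=H^*(X,\C)$ --- are exactly the ones Libgober uses; the routes diverge only at the end, where you run a $\lambda$-adic spectral sequence along an arc while Libgober compares initial forms of the determinantal generators with minors of the linearized complex, staying at the level of ideals and avoiding convergence questions.

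Two points you flag or elide should be confirmed. First, a tangent-cone direction is realized by an analytic arc only after reparametrization: one gets $\gamma(\lambda)=1+\lambda^m a+O(\lambda^{m+1})$ for some $m\ge 1$, not necessarily $m=1$; this is harmless, since then $d_1=\dots=d_{m-1}=0$ and $d_m$ is the Aomoto differential, so the same $E$-page computation applies. Second, the ``main obstacle'' you identify is genuinely fillable without retreating to Fitting ideals: over the complete discrete valuation ring $R$, Artin--Rees shows that $d(F^pC_R)\cap F^{p+r}C_R$ is cofinal with the $\lambda$-adic filtration on the finitely generated module $d(F^pC_R)$, so the derived limit of the cycle modules vanishes and the spectral sequence converges completely to $\gr H^*(C_R^\bullet)$ for the induced filtration. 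Since that filtration is commensurate with the $\lambda$-adic one (Artin--Rees again), a free summand $R^s\subseteq H^i(C_R^\bullet)$ forces $\dim_\C\gr^pH^i\ge s$ for infinitely many $p$, and each such $\gr^pH^i=E_\infty^{p,i-p}$ is a subquotient of $E_2^{p,i-p}\cong H^i(A,\cdot\, a)$, giving $\dim_\C H^i(A,\cdot\, a)\ge s\ge d$ as you want. One cosmetic mismatch remains: the paper's $\VV^i_d$ is defined by homology jump loci while you work with cohomology; over $\C$, for a finite complex, these loci agree up to the involution $\rho\mapsto\rho^{-1}$, which fixes $1$ and therefore does not affect the tangent cone. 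With these points supplied, your proof is complete.
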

 
Putting together Theorems \ref{thm:tau1}\eqref{t2} and 
\ref{thm:lib}, and using the fact that both types of tangent 
cone constructions commute with finite unions, we 
obtain an immediate corollary. 

\begin{corollary}
\label{cor:tautcres}
For each $i\le k$, the following inclusions hold:
\begin{enumerate}
\item 
$\tau_1(\WW^i_d(X))\subseteq \TC_1(\WW^i_d(X))
\subseteq \RR^i_d(X)$, for all $d>0$.
\\[-8pt]
\item 
$\tau_1(\WW^i(X))\subseteq \TC_1(\WW^i(X))\subseteq \RR^i(X)$.
\end{enumerate}
\end{corollary}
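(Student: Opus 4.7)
The plan is to stitch together the two pieces already at hand: Theorem \ref{thm:tau1}\eqref{t2}, which provides the inclusion $\tau_1(W)\subseteq\TC_1(W)$ for any Zariski closed subgroup $W\subset(\C^\times)^n$, and Theorem \ref{thm:lib}, which provides $\TC_1(\WW^i_d(X))\subseteq\RR^i_d(X)$. Since the corollary is explicitly advertised as an ``immediate consequence,'' I expect no substantive obstacle; the work is purely organizational.

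For part (1), I would fix $i\le k$ and $d>0$, apply Theorem \ref{thm:tau1}\eqref{t2} with $W=\WW^i_d(X)\subset\wG^{\circ}\cong(\C^\times)^n$ to obtain $\tau_1(\WW^i_d(X))\subseteq\TC_1(\WW^i_d(X))$, and then chain this with Libgober's inclusion $\TC_1(\WW^i_d(X))\subseteq\RR^i_d(X)$ supplied by Theorem \ref{thm:lib}. That is all of (1).

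For part (2), I would first observe that, by the definition $\VV^i(X)=\bigcup_{j=0}^{i}\VV^j_1(X)$ and $\WW^i(X)=\VV^i(X)\cap\wG^{\circ}$, one has
\[
\WW^i(X)=\bigcup_{j=0}^{i}\WW^j_1(X),
\]
where $\WW^j_1(X):=\VV^j_1(X)\cap\wG^{\circ}$. I would then invoke the fact, noted in Sections \ref{subsec:tcone} immediately after Definitions \ref{def:tcone} and \ref{def:exp tcone}, that both $\TC_1$ and $\tau_1$ commute with finite unions. Combining these two observations with part (1) applied in each degree $j\le i$ with depth $d=1$, I would write
\[
\tau_1(\WW^i(X))=\bigcup_{j=0}^{i}\tau_1(\WW^j_1(X))\subseteq\bigcup_{j=0}^{i}\TC_1(\WW^j_1(X))=\TC_1(\WW^i(X))\subseteq\bigcup_{j=0}^{i}\RR^j_1(X)=\RR^i(X),
\]
the last equality being the definition of $\RR^i(X)$ recalled in \eqref{eq:depth1 res}. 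This is exactly the statement of (2).

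The only nontrivial input that is not an outright quotation is the commutation of $\TC_1$ with finite unions; should that need to be justified on the spot, the cleanest argument is that $\TC_1$ is computed via sequences $\{z_n\}\to 1$ in $W$ with suitable scalings, so any such sequence in $W_1\cup W_2$ has an infinite subsequence in one of the two pieces. Otherwise the proof is entirely mechanical, and I would expect it to take only a few lines in final form.
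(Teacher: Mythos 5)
Your proposal is correct and follows exactly the paper's route: the paper derives the corollary by ``putting together Theorems \ref{thm:tau1}\eqref{t2} and \ref{thm:lib}, and using the fact that both types of tangent cone constructions commute with finite unions,'' which is precisely your chaining for part (1) and your union decomposition for part (2). No gaps.
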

In general, the above inclusions may very well be strict. 
In the presence of formality, though, they become equalities, 
at least in degree $i=1$. 

\subsection{Formality}
\label{subsec:formal}

Let us now collect some known facts on various formality 
notions. For further details and references, we refer to 
the recent survey \cite{PS-bmssmr}. 

Let $X$ be a connected CW-complex with finite 
$1$-skeleton.  The space $X$ is  {\em formal}\/ 
if there is a zig-zag of commutative, differential 
graded algebra quasi-isomorphisms connecting Sullivan's 
algebra of polynomial differential forms, $A_{\PL}(X, \Q)$,  
to the rational cohomology algebra, $H^*(X,\Q)$, 
endowed with the zero differential. The space $X$ is merely 
{\em $k$-formal}\/  (for some $k\ge 1$) if each of these 
morphisms induces an isomorphism in degrees up to $k$, 
and a monomorphism in degree $k+1$.

Examples of formal spaces include rational cohomology 
tori, Riemann surfaces, compact connected Lie groups, 
as well as their classifying spaces.  On the other hand, 
a nilmanifold is formal if and only if it is a torus. Formality 
is preserved under wedges and products of spaces, 
and connected sums of manifolds.  

The $1$-minimality property of a space  
depends only on its fundamental group.  
A finitely generated group $G$ is said to be $1$-formal 
if it admits a classifying space $K(G,1)$ which is $1$-formal, 
or, equivalently, if the Malcev Lie algebra $\m(G)$ (that is, the Lie 
algebra of the rational, prounipotent completion of $G$) admits a 
quadratic presentation.   Examples of $1$-formal groups include free 
groups and free abelian groups of finite rank, surface groups, 
and groups with first Betti number equal to $0$ or $1$.  
The $1$-formality property is preserved under free products 
and direct products.  

\begin{theorem}[\cite{DPS-duke}]
\label{thm:tcone}
Let $X$ be a $1$-formal space. Then, for each $d>0$,
\begin{equation}
\label{eq:tcone}
\tau_1(\VV^1_d(X))=\TC_1(\VV^1_d(X))=\RR^1_d(X).
\end{equation}
\end{theorem}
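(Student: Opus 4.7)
The plan is to combine Corollary \ref{cor:tautcres} with the reverse inclusion $\RR^1_d(X) \subseteq \tau_1(\VV^1_d(X))$. Since that corollary already provides $\tau_1(\VV^1_d(X)) \subseteq \TC_1(\VV^1_d(X)) \subseteq \RR^1_d(X)$, the missing containment forces the full chain of equalities in \eqref{eq:tcone}.

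The key input is an analytic comparison, valid under 1-formality, between the germ of $\VV^1_d(X)$ at the trivial character and the germ of $\RR^1_d(X)$ at the origin. Setting $G=\pi_1(X,x_0)$, the germ $(\VV^1_d(X),1)$ is controlled by the first Alexander invariant of $G$, which depends only on the Malcev Lie algebra $\m(G)$ (equivalently, on the $I$-adic completion of $\C G$). On the other hand, $\RR^1_d(X)$ is cut out of $H^1(X,\C)$ by the Koszul/Aomoto equations derived from the cup product on $H^{\le 2}(X,\C)$, and so depends only on the quadratic holonomy Lie algebra $\h(G)$. The 1-formality hypothesis is precisely the assertion that $\m(G)\cong \widehat{\h}(G)$ as complete filtered Lie algebras. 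Transporting the Alexander-invariant description of $(\VV^1_d(X),1)$ through this isomorphism, and unwinding the resulting identifications, I expect to obtain an analytic isomorphism of germs
\[
\exp \colon (\RR^1_d(X),0) \isom (\VV^1_d(X),1).
\]

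Once this germ isomorphism is in hand, the reverse inclusion follows cleanly. Fix $z\in \RR^1_d(X)$. Because the resonance variety is cut out of $H^1(X,\C)$ by homogeneous polynomials, the entire complex line $\C\cdot z$ lies in $\RR^1_d(X)$. Via the germ identification, $\exp(\lambda z)\in \VV^1_d(X)$ for all $\lambda$ in some neighborhood of $0\in\C$. The set $\{\lambda\in\C \mid \exp(\lambda z)\in \VV^1_d(X)\}$ is the preimage of an algebraic subvariety under the entire map $\lambda\mapsto \exp(\lambda z)$, hence is an analytic subset of $\C$; since it contains a neighborhood of $0$, it must be all of $\C$. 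Thus $z\in \tau_1(\VV^1_d(X))$, yielding the desired containment.

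The main obstacle is the germ isomorphism under 1-formality; this is where the substantive content of the theorem lies, and where the machinery of rational homotopy theory (Malcev completions, holonomy Lie algebras, and the quadratic-presentation criterion for 1-formality) has to be brought in. The remaining steps---exploiting the homogeneity of $\RR^1_d(X)$ and performing the one-variable analytic continuation---are essentially formal, and the passage from the $\TC_1$ equality to the $\tau_1$ equality is bypassed entirely by working directly at the level of the defining property \eqref{eq:tau1} of the exponential tangent cone.
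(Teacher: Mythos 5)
This theorem is not proved in the paper at all: it is quoted verbatim from \cite{DPS-duke}, so there is no internal argument to compare against. Judged on its own terms, your proposal correctly reconstructs the architecture of the proof in that reference: the content is concentrated in the exponential isomorphism of analytic germs $\exp\colon (\RR^1_d(X),0)\isom (\VV^1_d(X),1)$, and everything else is formal. Your derivation of the missing inclusion $\RR^1_d(X)\subseteq \tau_1(\VV^1_d(X))$ from that germ isomorphism is sound: homogeneity of $\RR^1_d(X)$ puts the whole line $\C\cdot z$ in the resonance variety, the germ isomorphism puts $\exp(\lambda z)$ in $\VV^1_d(X)$ for $\lambda$ in a disk about $0$, and since $\{\lambda\mid \exp(\lambda z)\in\VV^1_d(X)\}$ is the common zero set of the entire functions $\lambda\mapsto f_i(\exp(\lambda z))$ (with $f_i$ defining equations of $\VV^1_d(X)$ in the identity component of the character torus), a zero set with nonempty interior is all of $\C$. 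Combined with Corollary \ref{cor:tautcres}, this does close the chain of equalities.

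The genuine gap is that the germ isomorphism itself---which you rightly identify as ``where the substantive content of the theorem lies''---is only announced (``I expect to obtain\dots''), not established. That step is the entire theorem. Making it precise requires showing that the $I$-adic completion of the Alexander invariant $B_G=H_1(G',\Z)$, viewed as a module over $\C G_{\ab}$, is functorially computed from the Malcev Lie algebra $\m(G)$, and that under the quadraticity isomorphism $\m(G)\cong\widehat{\h}(G)$ supplied by $1$-formality the Fitting-ideal stratification of the completed Alexander module matches the stratification of $H^1(X,\C)$ by the ranks of the Aomoto complexes; one must also justify that the depth-$d$ characteristic variety near $1$ is detected by the Alexander invariant in the first place (a Fox-calculus/five-term exact sequence comparison). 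None of this is routine---it occupies the bulk of \cite{DPS-duke}---so as written your argument is a correct reduction of the theorem to its hardest lemma rather than a proof. If the intent is to cite that lemma from \cite{DPS-duke}, say so explicitly; otherwise the comparison of the completed Alexander invariant with its ``infinitesimal'' (holonomy Lie algebra) counterpart must be carried out.
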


In particular, the first resonance variety, $\RR^1(X)$, of 
a $1$-formal space $X$ is a finite union of rationally 
defined linear subspaces.   

\subsection{Straightness}
\label{subsec:straight}
In \cite{Su-aspm}, we delineate another class of spaces 
for which the resonance and characteristic varieties are 
intimately related to each other via the tangent cone 
constructions. 

\begin{definition}
\label{def:straight}
We say that $X$ is {\em locally $k$-straight}\/ if, for each 
$i\le k$, all components of $\WW^{i}(X)$ passing through 
the origin $1$ are algebraic subtori, and the tangent cone 
at $1$ to $\WW^i(X)$ equals $\RR^{i}(X)$. If, moreover, 
all positive-dimensional components of $\WW^{i}(X)$ 
contain the origin, we say $X$ is {\em $k$-straight}. 
If these conditions hold for all $k\ge 1$, we say $X$ is  
{\em (locally) straight}.
\end{definition}

Examples of straight spaces include Riemann surfaces, 
tori, and knot complements. Under some further assumptions, 
the straightness properties behave well with respect 
to finite direct products and wedges.  

It follows from \cite{DPS-duke} that every $1$-formal 
space is locally $1$-straight.  In general, though, 
examples from \cite{Su-aspm} show that 
$1$-formal spaces need not be $1$-straight, and 
$1$-straight spaces need not be $1$-formal. 

\begin{theorem}[\cite{Su-aspm}]
\label{thm:rat res}
Let $X$ be a locally $k$-straight space.  Then, for all $i\le k$,
\begin{enumerate}
\item \label{rs1}
$\tau_1(\WW^i(X))=\TC_1(\WW^i(X))=\RR^i(X)$.
\\[-8pt]
\item \label{rs2}
$\RR^i(X,\Q)=
\bigcup_{L\in \CC_i(X)} L$. 
\end{enumerate}
\end{theorem}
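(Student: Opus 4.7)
The plan is to prove part (1) first, then deduce part (2) by taking rational points. Throughout, I will work inside the algebraic torus $\wG^{\circ} = H^1(X,\C^{\times})^{\circ} = (\C^{\times})^n$.

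Toward (1), the right-hand equality $\TC_1(\WW^i(X)) = \RR^i(X)$ is built into the definition of local $k$-straightness, so the substance is the left equality $\tau_1(\WW^i(X)) = \TC_1(\WW^i(X))$. I would decompose $\WW^i(X) = W_0 \cup W'$, where $W_0$ collects the irreducible components passing through the identity and $W'$ collects the remaining components. Since both $\tau_1$ and $\TC_1$ depend only on the analytic germ at $1$, and $1 \notin W'$, both invariants return the empty set when evaluated on $W'$. By local $k$-straightness, $W_0$ is a finite union of algebraic subtori $T_\alpha \subset (\C^{\times})^n$, and on each such subtorus the two tangent cones coincide with $T_1 T_\alpha$, as noted right after Theorem \ref{thm:tau1}. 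Since both operations commute with finite unions, we obtain $\tau_1(W_0) = \TC_1(W_0)$, and combining this with the vanishing on $W'$ yields $\tau_1(\WW^i(X)) = \TC_1(\WW^i(X))$.

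For part (2), substitute part (1) into the decomposition \eqref{eq:dfxi}, which expresses $\tau_1(\WW^i(X))$ as the union $\bigcup_{L \in \CC_i(X)} L \otimes \C$ of complexifications of rationally defined linear subspaces of $H^1(X,\Q)$, with $\CC_i(X)$ given by Theorem \ref{thm:tau1}\eqref{t1}. This gives $\RR^i(X) = \bigcup_{L \in \CC_i(X)} L \otimes \C$. Intersecting both sides with the rational structure $H^1(X,\Q)$, and using the elementary fact that $(L \otimes \C) \cap H^1(X,\Q) = L$ whenever $L$ is a $\Q$-subspace, I conclude $\RR^i(X,\Q) = \bigcup_{L \in \CC_i(X)} L$.

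The main (mild) obstacle is the bookkeeping around the components of $\WW^i(X)$ that avoid $1$: one has to confirm that ``$\tau_1$ and $\TC_1$ commute with finite unions'' genuinely allows us to discard $W'$, i.e.\ that $\TC_1(W_0 \cup W') = \TC_1(W_0)$ and similarly for $\tau_1$. This is a direct consequence of germ-invariance at $1$, already recorded in the excerpt. Once this reduction is in place, the proof amounts to invoking the tautological identity $\tau_1(T) = \TC_1(T) = T_1(T)$ on subtori together with Theorem \ref{thm:tau1}\eqref{t1}, which supplies the rational structure needed to descend to $\Q$-points in part (2).
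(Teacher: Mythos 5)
Your argument is correct, and it is essentially the intended one: the paper cites this result from \cite{Su-aspm} without reproducing a proof, but the only content beyond Definition \ref{def:straight} is exactly what you supply — discarding the components missing $1$ by germ-invariance, applying $\tau_1(T)=\TC_1(T)=T_1(T)$ to the subtori through $1$, and then intersecting \eqref{eq:dfxi} with $H^1(X,\Q)$. No gaps.
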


In particular, all the resonance varieties $\RR^i(X)$ 
of a locally straight space $X$ are finite unions of 
rationally defined linear subspaces. 

\section{The Dwyer--Fried invariants}
\label{sec:df res}

In this section, we recall the definition of the 
Dwyer--Fried sets, and the way these sets relate 
to the (co)homology jump loci of a space.

\subsection{Betti numbers of free abelian covers}
\label{subsec:df}

As before, let $X$ be a connected CW-complex 
with finite $1$-skeleton, and let $G=\pi_1(X,x_0)$.  
Denote by $n=b_1(X)$ the first Betti number of 
$X$.  Fix an integer $r$ between $1$ and $n$, 
and consider the (connected) regular covers 
of $X$, with group of deck-transformations $\Z^r$.  

Each such cover, $X^{\nu}\to X$, is determined 
by an epimorphism $\nu \colon G \surj \Z^r$.  The 
induced homomorphism in rational cohomology, 
$\nu^*\colon H^1(\Z^r,\Q) \inj H^1(G,\Q)$, defines  
an $r$-dimensional subspace, $P_{\nu}=\im(\nu^*)$,  
in the $n$-dimensional $\Q$-vector space $H^1(G,\Q)=H^1(X,\Q)$.  
Conversely, each $r$-dimensional subspace 
$P\subset H^1(X,\Q)$ can be written as $P=P_{\nu}$, 
for some epimorphism $\nu \colon G \surj \Z^r$, 
and thus defines a regular $\Z^r$-cover of $X$. 

In summary, the regular $\Z^r$-covers of $X$ are 
parametrized by the Grassmannian of $r$-planes in 
$H^1(X,\Q)$, via the correspondence 
\begin{align}
\label{eq:corresp}
\big\{ \text{regular $\Z^r$-covers of $X$} \big\} 
& \longleftrightarrow 
\big\{ \text{$r$-planes in $H^1(X,\Q)$}\big\}\\
X^{\nu}  \to X\qquad & \longleftrightarrow\qquad  
P_{\nu}=\im(\nu^*).\notag
\end{align}

Moving about the rational Grassmannian 
and recording how the Betti numbers of the corresponding 
covers vary leads to the following definition.

\begin{definition}
\label{def:df}
The {\em Dwyer--Fried invariants}\/ of the space $X$ are 
the subsets 
\begin{equation*}
\label{eq:grass}
\Omega^i_r(X)=\big\{P_{\nu} \in \Grass_r(H^1(X,\Q)) \bigmid 
\text{$b_{j} (X^{\nu}) <\infty$ for $j\le i$}  \big\}. 
\end{equation*}
\end{definition}

For a fixed integer $r$ between $1$ and $n$, 
these sets form a descending filtration of the Grassmannian 
of $r$-planes in $H^1(X,\Q)=\Q^n$, 
\begin{equation}
\label{eq:df filt}
\Grass_r(\Q^n) = \Omega^0_r(X) \supseteq \Omega^1_r(X)  
\supseteq \Omega^2_r(X)  \supseteq \cdots.
\end{equation}
If $r>n$, we adopt the convention that $\Grass_r(\Q^n)=\emptyset$,  
and define $\Omega^i_r(X)=\emptyset$.

As noted in \cite{Su11}, the $\Omega$-sets are invariants 
of homotopy-type: if $f\colon X\to Y$ is a homotopy 
equivalence, then the induced isomorphism in cohomology, 
$f^*\colon H^1(Y,\Q) \to H^1(X,\Q)$, defines isomorphisms 
$f^*_r\colon \Grass_r(H^1(Y,\Q)) \to \Grass_r(H^1(X,\Q))$, 
which send each subset $\Omega^i_r(Y)$ bijectively 
onto $\Omega^i_r(X)$.  

Particularly simple is the situation when 
$n=b_1(X)>0$ and $r=n$.  In this case, $\Grass_n(H^1(X,\Q))=\pt$.  
Under the correspondence from \eqref{eq:corresp}, 
this single point is realized by the maximal free abelian cover, 
$X^{\alpha}\to X$, where  
$\alpha\colon  G\surj G_{\ab}/\Tors(G_{\ab})=\Z^n$ 
is the canonical projection. 
The sets $\Omega^i_n(X)$ are then given by
\begin{equation}
\label{eq:df b1}
\Omega^i_n(X)=\begin{cases}
\pt  & \text{if $b_j(X^{\alpha})<\infty$ for all $j\le i$},\\
\emptyset  & \text{otherwise}.\\
\end{cases}
\end{equation}

\subsection{Dwyer--Fried invariants and characteristic varieties}
\label{subsec:df cv}

The next theorem reduces the computation of the $\Omega$-sets 
to a more standard computation in algebraic geometry.  
The theorem was proved by Dwyer and Fried in \cite{DF}, 
using the support loci for the Alexander invariants, 
and was recast in a slightly more general context by 
Papadima and Suciu in \cite{PS-plms}, using the 
characteristic varieties.  We state this result in the  
form established in \cite{Su11}.

\begin{theorem}[\cite{DF, PS-plms, Su11}]
\label{thm:df cv}
Suppose $X$ has finite $k$-skeleton, for some $k\ge 1$.  
Then, for all $i\le k$ and $r\ge 1$, 
\begin{equation}
\label{eq:grass cv}
\Omega^i_r(X)=\big\{P\in \Grass_r(\Q^n) \bigmid
\dim \, (\exp(P \otimes \C) \cap \WW^i(X) )=0 \big\}.
\end{equation}
\end{theorem}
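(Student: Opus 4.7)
The plan is to translate finite-generation of Betti numbers of the cover $X^\nu \to X$ into a support condition over the group ring $R := \C[\Z^r]$, and then identify that support with the intersection $\exp(P \otimes \C) \cap \WW^i(X)$ inside the character torus. First, I fix $P \in \Grass_r(H^1(X,\Q))$ and realize it as $P = \im(\nu^*)$ for some epimorphism $\nu : G \surj \Z^r$. Since $X$ has finite $k$-skeleton, I replace $X$ by a homotopy-equivalent CW-complex of dimension $k+1$, so that the cellular chain complex $C_*(X^\nu,\C)$ is a chain complex of finitely generated free $R$-modules in degrees $\le k+1$; consequently each $H_j(X^\nu,\C)$ is a finitely generated $R$-module for $j \le k$.

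Next I invoke a standard commutative-algebra input: for a finitely generated module $M$ over the Noetherian coordinate ring $R = \C[t_1^{\pm 1},\dots,t_r^{\pm 1}]$ of the torus $(\C^\times)^r$, one has $\dim_\C M < \infty$ if and only if $\dim \supp_R(M) = 0$. Applied to $M_i := \bigoplus_{j=0}^{i} H_j(X^\nu,\C)$, finiteness of the Betti numbers $b_0(X^\nu),\dots,b_i(X^\nu)$ is equivalent to $\supp_R(M_i)$ being a finite subset of $\Spec R$. Dualizing $\nu$ produces a closed immersion of algebraic groups $\hat\nu : (\C^\times)^r \inj \wG$ whose image is precisely the subtorus $\exp(P\otimes\C) \subset \wG^\circ$. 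The heart of the argument is then the support identity
\[
\supp_R(M_i) \;=\; \hat\nu^{-1}(\VV^i(X)) \;=\; \hat\nu^{-1}(\WW^i(X)),
\]
the second equality holding because $\hat\nu$ lands in the identity component. Since $\hat\nu$ identifies $(\C^\times)^r$ with $\exp(P\otimes\C)$, the left-hand side is $0$-dimensional iff $\exp(P\otimes\C) \cap \WW^i(X)$ is $0$-dimensional, and chaining the three equivalences yields the claimed formula.

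The main obstacle is justifying the support identity in the display. The identification $C_*(X,\C_{\hat\nu(\rho)}) \cong C_*(X^\nu,\C) \otimes_R \C_\rho$ and the hyper-Tor spectral sequence
\[
E^2_{p,q} = \Tor^R_p\bigl(H_q(X^\nu,\C),\,\C_\rho\bigr) \;\Longrightarrow\; H_{p+q}\bigl(X,\C_{\hat\nu(\rho)}\bigr)
\]
link the two sides, but passing between set-theoretic supports requires care. The easy direction is: from a nonvanishing $H_j(X,\C_{\hat\nu(\rho)})$ with $j\le i$ one recovers some nonvanishing $E^2_{p,q}$ with $p+q=j$, hence $\rho \in \supp_R H_q(X^\nu,\C) \subset \supp_R M_i$. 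Conversely, starting from $\rho \in \supp_R H_q(X^\nu,\C)$ with $q \le i$, one has $E^2_{0,q} \ne 0$ and must rule out that this class is killed by all differentials before reaching $E^\infty$; this is where the finite global dimension of $R$, together with the finite generation of $C_*(X^\nu,\C)$ through degree $k+1$, is essential, since it allows one to pass to the largest surviving row in a bounded page range. This bookkeeping is the one step that is not completely routine, and it is precisely what forces the mild skeletal replacement in the opening setup.
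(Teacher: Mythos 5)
The paper does not prove Theorem \ref{thm:df cv}; it quotes it from \cite{DF, PS-plms, Su11}, and your overall route (finiteness of $b_j(X^{\nu})$ for $j\le i$ $\Leftrightarrow$ zero-dimensionality of $\supp_R\bigl(\bigoplus_{j\le i}H_j(X^{\nu},\C)\bigr)$, followed by the identification of that support with $\hat\nu^{-1}(\VV^i(X))=\exp(P\otimes\C)\cap\WW^i(X)$ via the universal-coefficients spectral sequence) is exactly the one in those references. The reductions at the start, the Noetherian/Nullstellensatz step, and the ``easy'' direction of the support identity are all fine.

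The one place where your write-up goes wrong is precisely the step you flag as non-routine, and the hint you give for it is backwards. To show that $\rho\in\supp_R H_q(X^{\nu},\C)$ for some $q\le i$ forces $H_j(X,\C_{\hat\nu(\rho)})\ne 0$ for some $j\le i$, you should pass to the \emph{smallest} such $q$, not ``the largest surviving row.'' If $q$ is minimal with $(H_q(X^{\nu},\C))_{\m_\rho}\ne 0$, then every row $q'<q$ of the $E^2$-page vanishes identically (all $\Tor^R_p(H_{q'},\C_\rho)$ are killed by localization at $\m_\rho$), so every incoming differential $d_s\colon E^s_{s,\,q-s+1}\to E^s_{0,q}$ with $s\ge 2$ originates in a vanishing row; since the outgoing differentials from total column $p=0$ are automatically zero, $E^{\infty}_{0,q}=E^{2}_{0,q}=H_q(X^{\nu},\C)\otimes_R\C_\rho$, which is nonzero by Nakayama, whence $H_q(X,\C_{\hat\nu(\rho)})\ne 0$ with $q\le i$. (Equivalently, argue by induction on $q$ from the vanishing of $H_j(X,\C_{\hat\nu(\rho)})$ for $j\le i$ to the vanishing of the rows $q\le i$.) Choosing the largest nonzero row does not control the incoming differentials, which come from \emph{lower} rows, so the argument as sketched does not close. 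Relatedly, the finite global dimension of $R$ is not what makes this work---the spectral sequence is first-quadrant and the minimal-row argument needs only finite generation; $\gr$-boundedness of the columns is a convenience, not the crux. With the quantifier corrected, your proof is complete and agrees with the standard one.
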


In other words, an $r$-plane $P\subset \Q^n$ belongs 
to the set $\Omega^i_r(X)$ if and only if the algebraic torus 
$T=\exp(P \otimes \C)$ intersects the characteristic 
variety $W=\WW^i(X)$ only in finitely many points.  
When this happens, the exponential tangent cone 
$\tau_1(T\cap W)$ equals $\{0\}$, forcing $P\cap L=\{0\}$, 
for every subspace $L\subset \Q^n$ in the characteristic 
subspace arrangement $\CC_i(X)$.  Consequently,  
\begin{equation}
\label{eq:ubound}
\Omega^i_r(X) \subseteq \bigg(
\bigcup_{L\in \CC_i(X)} \big\{P \in \Grass_r(H^1(X,\Q)) \bigmid 
P\cap L \ne \{0\} \big\} \bigg)^{\compl}.
\end{equation}

As noted in \cite{Su11}, this inclusion may be 
reinterpreted in terms of the classical 
incidence correspondence from algebraic geometry. 
Let $V$ be a homogeneous variety in $\k^n$.  
The locus of $r$-planes in $\k^n$ that meet $V$, 
\begin{equation}
\label{eq:incident}
\sigma_r(V) = \big\{ P \in \Grass_r(\k^n) 
\bigmid P \cap  V \ne \{0\} \big\}, 
\end{equation}
is a Zariski closed subset of the Grassmannian 
$\Grass_r(\k^n)$.  In the case when $V$ is a 
linear subspace $L\subset \k^n$, the incidence 
variety $\sigma_r(L)$ is known as the  
{\em special Schubert variety}\/ defined by $L$. 
If $L$ has codimension $d$ in $\k^n$, then 
$\sigma_r(L)$ has codimension $d-r+1$ in $\Grass_r(\k^n)$. 

Applying this discussion to the homogeneous variety 
$\tau^{\Q}_1 (\WW^i(X)) = \bigcup_{L\in \CC_i(X)} L$ 
lying inside $H^1(X,\Q)=\Q^n$, and using 
formula \eqref{eq:ubound}, we obtain the following 
corollary.

\begin{corollary}[\cite{Su11}]
\label{cor:omega schubert}
Let $X$ be a connected CW-complex with finite $k$-skeleton. 
For all $i\le k$ and $r\ge 1$, 
\begin{align}
\label{eq:omega schubert}
\Omega^i_r(X) &\subseteq  \Grass_r(H^1(X,\Q)) \setminus 
 \sigma_r \big( \tau^{\Q}_1 (\WW^i(X)) \big)\\
 \notag
 &=   \Grass_r(H^1(X,\Q)) \setminus 
\bigcup_{L\in \CC_i(X)} \sigma_r(L ).
\end{align}
\end{corollary}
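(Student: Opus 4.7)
The plan is to assemble the inclusion directly from Theorem \ref{thm:df cv}, the decomposition of $\tau_1(\WW^i(X))$ in \eqref{eq:dfxi}, and the elementary fact (recorded in Section \ref{subsec:tcone}) that $\tau_1$ commutes with arbitrary intersections. The right-hand equality is essentially a formal unpacking of the incidence locus \eqref{eq:incident}, so it will be dispatched at the end.

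First, I would fix an $r$-plane $P\in \Omega^i_r(X)$ and set $T=\exp(P\otimes \C)$ and $W=\WW^i(X)$, so $T$ is an $r$-dimensional algebraic subtorus of the identity component $\wG^{\circ}\cong (\C^{\times})^n$. By Theorem \ref{thm:df cv}, the intersection $T\cap W$ is $0$-dimensional, hence its analytic germ at $1$ is supported at $\{1\}$. Since $\tau_1$ depends only on the analytic germ at $1$, this gives $\tau_1(T\cap W)=\{0\}$.

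Next I would invoke the fact that $\tau_1$ commutes with intersections to write
\[
\{0\}=\tau_1(T\cap W)=\tau_1(T)\cap \tau_1(W).
\]
Because $T$ is an algebraic subtorus, $\tau_1(T)=P\otimes \C$, as noted after Theorem \ref{thm:tau1}. Combining this with the rational decomposition \eqref{eq:dfxi}, namely $\tau_1(\WW^i(X))=\bigcup_{L\in \CC_i(X)} L\otimes \C$, yields $(P\otimes\C)\cap (L\otimes \C)=\{0\}$ for every $L\in \CC_i(X)$. Since each $L$ is rationally defined, this is equivalent to $P\cap L=\{0\}$ in $H^1(X,\Q)$, which by the definition \eqref{eq:incident} of $\sigma_r$ means $P\notin \sigma_r(L)$ for any $L\in \CC_i(X)$. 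This establishes the first containment.

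For the stated equality on the right, I would simply observe that if $V=\bigcup_{L\in \CC_i(X)} L$, then an $r$-plane $P$ satisfies $P\cap V\neq \{0\}$ if and only if $P\cap L\neq \{0\}$ for at least one $L$; hence $\sigma_r(V)=\bigcup_{L\in \CC_i(X)} \sigma_r(L)$, and $V=\tau_1^{\Q}(\WW^i(X))$ by \eqref{eq:dfxi}. There is no real obstacle in this argument: the two substantive inputs, Theorem \ref{thm:df cv} and the intersection-commuting property of $\tau_1$, have already been recorded, so the proof is a short assembly. The only point that requires a moment's care is the passage from vanishing of $(P\otimes \C)\cap (L\otimes \C)$ to vanishing of $P\cap L$ over $\Q$, which follows because $L$ is rationally defined and $P\subseteq H^1(X,\Q)$.
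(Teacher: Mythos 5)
Your proposal is correct and follows essentially the same route as the paper: Theorem \ref{thm:df cv} gives $\dim(T\cap W)=0$, whence $\tau_1(T\cap W)=\{0\}$, and the intersection-commuting property of $\tau_1$ together with $\tau_1(T)=P\otimes\C$ and the decomposition \eqref{eq:dfxi} forces $P\cap L=\{0\}$ for every $L\in\CC_i(X)$; the reinterpretation via $\sigma_r$ is the same formal step the paper makes. No changes needed.
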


In other words, each set $\Omega^i_r(X)$ is 
contained in the complement of a Zariski closed subset 
of the Grassmanian $\Grass_r(H^1(X,\Q))$, namely, the 
union of the special Schubert varieties $\sigma_r(L)$ 
corresponding to the subspaces $L$ in $\CC_i(X)$. 

Under appropriate hypothesis, the inclusion from 
Corollary \ref{cor:omega schubert} holds as equality. 
The next two propositions illustrate this point. 

\begin{prop}[\cite{Su11}]
\label{prop:tau schubert}
Let $X$ be a connected CW-complex with finite $k$-skeleton. 
Suppose that, for some $i\le k$, all positive-dimensional components 
of $\WW^i(X)$ are algebraic subtori.  Then, for all $r\ge 1$,  
\begin{equation}
\label{eq:sharp schubert}
\Omega^i_r(X) = \Grass_r(H^1(X,\Q)) \setminus 
\bigcup_{L\in \CC_i(X)} \sigma_r(L ).
\end{equation}
\end{prop}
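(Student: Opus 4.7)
The forward inclusion $\Omega^i_r(X) \subseteq \Grass_r(H^1(X,\Q)) \setminus \bigcup_{L\in \CC_i(X)} \sigma_r(L)$ is already established by Corollary \ref{cor:omega schubert} without any hypothesis on $\WW^i(X)$. The plan is therefore to use the subtorus hypothesis to upgrade this to an equality by proving the reverse inclusion.

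Let $P \subset H^1(X,\Q)$ be an $r$-plane lying in the right-hand side, so that $P \cap L = \{0\}$ for every $L \in \CC_i(X)$. Set $T = \exp(P\otimes \C)$ and $W = \WW^i(X)$. By Theorem \ref{thm:df cv}, what I need to show is that $T \cap W$ is finite, and I will argue this by contradiction. Suppose $T \cap W$ is infinite. Since $T\cap W$ is a Zariski closed subset of the torus $T$, it must then have a positive-dimensional irreducible component, which in turn lies in some positive-dimensional irreducible component $W_\alpha$ of $W$. Here is where I invoke the hypothesis: $W_\alpha$ is an algebraic subtorus of $\wG^{\circ} = (\C^{\times})^n$, and in particular passes through $1$.

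Consequently $T \cap W_\alpha$ is an algebraic subgroup of $(\C^{\times})^n$ containing a positive-dimensional piece, so its identity component is a positive-dimensional subtorus. Since both $T$ and $W_\alpha$ are subtori through $1$, the tangent space at $1$ of their intersection is $T_1(T)\cap T_1(W_\alpha) = (P\otimes \C) \cap T_1(W_\alpha)$, and this space is therefore nonzero. Now $W_\alpha$ being a subtorus implies $T_1(W_\alpha)$ is a rationally defined linear subspace, with rational form $L_\alpha := T_1(W_\alpha) \cap \Q^n$; moreover $\tau_1(W_\alpha) = T_1(W_\alpha)$, so $L_\alpha$ appears as a member of the characteristic arrangement $\CC_i(X)$ from the decomposition $\tau_1(\WW^i(X)) = \bigcup_{L\in \CC_i(X)} L\otimes \C$. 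Since $P$ and $L_\alpha$ are both rationally defined, $(P \cap L_\alpha) \otimes \C = (P\otimes \C) \cap (L_\alpha \otimes \C) \ne \{0\}$, forcing $P\cap L_\alpha \ne \{0\}$, which contradicts our assumption on $P$.

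The main technical point — really the only non-formal step — is the claim that the tangent space of $T\cap W_\alpha$ at $1$ coincides with $(P\otimes \C) \cap T_1(W_\alpha)$, and that a subtorus' tangent space is rationally defined so that the intersection descends to $\Q$. Both facts are standard for algebraic subtori of $(\C^{\times})^n$, so I expect the proof to be short once these are invoked cleanly. The broader moral is that the subtorus hypothesis is exactly what makes the exponential tangent cone $\tau_1(\WW^i(X))$ faithfully detect positive-dimensional intersections between $T$ and $W$, allowing the upper bound from \eqref{eq:ubound} to be saturated.
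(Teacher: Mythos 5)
Your proof is correct, and since the paper quotes this proposition from \cite{Su11} without reproving it, I can only say that your route is the expected one: a positive-dimensional piece of $\exp(P\otimes\C)\cap\WW^i(X)$ must sit inside a subtorus component $W_\alpha$, whose rationally defined tangent space then meets $P\otimes\C$ nontrivially and forces $P$ to meet $\tau_1^{\Q}(\WW^i(X))$, contradicting $P\cap L=\{0\}$ for all $L\in\CC_i(X)$. The only cosmetic point is that $T_1(W_\alpha)\cap\Q^n$ need only be \emph{contained in} some member of $\CC_i(X)$ rather than literally be one, which changes nothing in the contradiction.
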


\begin{prop}[\cite{DF},  \cite{PS-plms}, \cite{Su11}] 
\label{prop:df1}
Let $X$ be a CW-complex with finite $k$-skeleton. 
Then, for all $i\le k$, 
\begin{equation}
\label{eq:df1}
\Omega^i_1(X) = \bP(H^1(X,\Q))\setminus 
\bigcup_{L\in \CC_i(X)} \bP(L),
\end{equation}
where $\bP(V)$ denotes the projectivization 
of a homogeneous subvariety $V\subseteq H^1(X,\Q)$.
\end{prop}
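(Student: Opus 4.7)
The strategy is to sharpen the general inclusion of Corollary \ref{cor:omega schubert} to an equality in the special case $r=1$, by exploiting the irreducibility of one-dimensional algebraic subtori. Under the standard identification $\Grass_1(H^1(X,\Q)) = \bP(H^1(X,\Q))$, a line $P \subset H^1(X,\Q)$ lies in $\Omega^i_1(X)$ if and only if, by Theorem \ref{thm:df cv}, the one-dimensional algebraic subtorus $T := \exp(P \otimes \C)$ meets $W := \WW^i(X)$ in only finitely many points.

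The next step is the key one: since $T$ is an irreducible curve and $W$ is Zariski closed, $T \cap W$ is either finite or all of $T$. Hence the finiteness condition is equivalent to $T \not\subseteq W$. Now observe that containment $T \subseteq W$ admits a clean reformulation in terms of the exponential tangent cone. Because $P \otimes \C$ is a $\C$-linear subspace, the assertion ``$\exp(z) \in W$ for every $z \in P\otimes \C$'' is equivalent to ``$\exp(\lambda z) \in W$ for every $z \in P \otimes \C$ and every $\lambda \in \C$'', which by Definition \ref{def:exp tcone} means precisely $P \otimes \C \subseteq \tau_1(W)$.

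By Theorem \ref{thm:tau1}\eqref{t1} together with the decomposition \eqref{eq:dfxi}, we have $\tau_1(W) = \bigcup_{L \in \CC_i(X)} L \otimes \C$, a finite union of complexifications of rationally defined linear subspaces of $H^1(X,\Q)$. Since $P \otimes \C$ is irreducible of dimension $1$, its containment in this finite union forces $P \otimes \C \subseteq L \otimes \C$ for some $L \in \CC_i(X)$. The rationality of $L$ then yields $P \subseteq L$, i.e.\ $\bP(P) \in \bP(L)$. Reading the chain of equivalences backwards gives the converse inclusion, and together they establish the claimed equality.

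The one-dimensional setting thus collapses cleanly because of the irreducibility of $T$; no obstacle of the kind that prevents equality for general $r \ge 2$ (where an $r$-torus can intersect a positive-dimensional translated component of $W$ nontrivially without being contained in any single piece of $\tau_1(W)$) arises here. The only small bookkeeping points are (a) the convention that ``$\dim(T \cap W) = 0$'' in Theorem \ref{thm:df cv} encompasses finiteness, and (b) the routine descent from $P \otimes \C \subseteq L \otimes \C$ to $P \subseteq L$ via rationality of both subspaces. Neither constitutes a real difficulty, so this is the entire argument.
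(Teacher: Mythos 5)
Your argument is correct. The paper states Proposition \ref{prop:df1} as a quoted result (from \cite{DF}, \cite{PS-plms}, \cite{Su11}) and gives no proof of its own, but your route is exactly the standard one implicit in the surrounding text: start from Theorem \ref{thm:df cv}, use that the one-dimensional subtorus $T=\exp(P\otimes\C)$ is irreducible so that $T\cap \WW^i(X)$ is either finite or all of $T$, translate $T\subseteq \WW^i(X)$ into $P\otimes\C\subseteq\tau_1(\WW^i(X))$ via the homogeneity of $P\otimes\C$, and then use the decomposition \eqref{eq:dfxi} together with rationality to land in a single $L\in\CC_i(X)$. All the steps check out (note that $1\in T\cap\WW^i(X)$ always, so the dimension-zero condition is genuinely finiteness of a nonempty set), and this is precisely the point where the argument breaks for $r\ge 2$, as you observe.
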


In either of these two situations, the sets $\Omega^i_r(X)$ 
are Zariski open subsets of $\Grass_r(H^1(X,\Q))$. In 
general, though, the sets $\Omega^i_r(X)$ need not 
be open, not even in the usual topology on the rational 
Grassmanian. This phenomenon was first noticed by 
Dwyer and Fried, who constructed in \cite{DF} a 
$3$-dimensional cell complex $X$ for which 
$\Omega^2_2(X)$ is a finite set (see Example \ref{ex:df}). 
In \cite{Su11}, we  provide examples of finitely presented 
groups $G$ for which $\Omega^1_2(G)$ is not open.  

\subsection{Straightness and the $\Omega$-sets}
\label{subsec:df straight}

Under appropriate straightness assumptions, the 
upper bounds for the Dwyer--Fried sets can be 
expressed in terms of the resonance varieties associated 
to the cohomology ring $H^*(X,\Q)$.

\begin{theorem}[\cite{Su-aspm}]
\label{thm:df straight}
Let $X$ be a connected CW-complex. 

\begin{enumerate}
\item \label{s1}
If $X$ is locally $k$-straight, then
$\Omega^i_r(X) \subseteq \Grass_r(H^1(X,\Q)) \setminus 
\sigma_r(\RR^i(X,\Q))$,  
for all $i\le k$ and $r\ge 1$. 
\item \label{s2}
If $X$ is $k$-straight, then 
$\Omega^i_r(X) = \Grass_r(H^1(X,\Q)) \setminus 
\sigma_r(\RR^i(X,\Q))$, 
for all $i\le k$ and $r\ge 1$. 
\end{enumerate}
\end{theorem}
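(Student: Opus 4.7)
The plan is to deduce both statements from Theorem \ref{thm:rat res}, combined with the general upper bound of Corollary \ref{cor:omega schubert} and the equality criterion of Proposition \ref{prop:tau schubert}; essentially no new machinery is needed, only a careful identification of which Grassmannian subsets coincide under the straightness hypotheses.

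For part \eqref{s1}, I start from the general upper bound
\[
\Omega^i_r(X) \subseteq \Grass_r(H^1(X,\Q)) \setminus
\sigma_r\bigl( \tau^{\Q}_1(\WW^i(X)) \bigr),
\]
which holds for any $X$ with finite $k$-skeleton and any $i\le k$ by Corollary \ref{cor:omega schubert}. Under local $k$-straightness, Theorem \ref{thm:rat res}\eqref{rs2} gives
\[
\RR^i(X,\Q) = \bigcup_{L\in \CC_i(X)} L = \tau^{\Q}_1(\WW^i(X)),
\]
so $\sigma_r(\RR^i(X,\Q)) = \sigma_r(\tau^{\Q}_1(\WW^i(X)))$, and the desired inclusion follows by substitution.

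For part \eqref{s2}, $k$-straightness (Definition \ref{def:straight}) means that in addition every positive-dimensional component of $\WW^i(X)$ (for $i\le k$) passes through $1$, and hence, by local straightness, is an algebraic subtorus. This is precisely the hypothesis of Proposition \ref{prop:tau schubert}, which upgrades the above inclusion to an equality
\[
\Omega^i_r(X) = \Grass_r(H^1(X,\Q)) \setminus
\bigcup_{L\in \CC_i(X)} \sigma_r(L)
= \Grass_r(H^1(X,\Q)) \setminus \sigma_r\bigl(\tau^{\Q}_1(\WW^i(X))\bigr).
\]
Applying the same identification $\tau^{\Q}_1(\WW^i(X)) = \RR^i(X,\Q)$ from Theorem \ref{thm:rat res}\eqref{rs2} converts the right-hand side to $\Grass_r(H^1(X,\Q)) \setminus \sigma_r(\RR^i(X,\Q))$, as claimed.

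There is no real obstacle here; the substantive content was absorbed into Theorem \ref{thm:rat res} (which requires the tangent cone equalities $\tau_1(\WW^i(X)) = \TC_1(\WW^i(X)) = \RR^i(X)$ characteristic of straightness) and into the general Grassmannian formulas of Section \ref{subsec:df cv}. The proof is thus a short combination of previously established results, with the only subtlety being to remember that the subspace arrangement $\CC_i(X)$ is, by construction, rationally defined, so passing to $\Q$-points on either side of the identity does not cause trouble.
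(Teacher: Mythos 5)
Your proof is correct, and it is essentially the argument the paper itself indicates: the theorem is quoted from \cite{Su-aspm} without a written proof here, but the paragraph immediately following it makes exactly your identification $\RR^i(X,\Q)=\bigcup_{L\in\CC_i(X)}L=\tau_1^{\Q}(\WW^i(X))$ via Theorem~\ref{thm:rat res}\eqref{rs2}. With that substitution, part~\eqref{s1} is Corollary~\ref{cor:omega schubert} and part~\eqref{s2} is Proposition~\ref{prop:tau schubert} (whose hypothesis you correctly verify from Definition~\ref{def:straight}), so nothing is missing.
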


If $X$ is locally $k$-straight, we also know from 
Theorem \ref{thm:rat res}\eqref{rs2} that $\RR^i(X,\Q)$ 
is the union of the linear subspaces comprising $\CC_i(X)$, 
for all $i\le k$.   Thus, if $X$ is $k$-straight, then $\Omega^i_r(X)$ 
is the complement of a finite union of special Schubert 
varieties in the Grassmannian of $r$-planes in $H^1(X,\Q)$;   
in particular, $\Omega^i_r(X)$ is a Zariski open set in 
$\Grass_r(H^1(X,\Q))$.

The straightness hypothesis is crucial for the equality in 
Theorem \ref{thm:df straight}\eqref{s2} to hold. 

\begin{example}
\label{ex:straight c again}
Let $G=\langle x_1, x_2 \mid x_1^2 x_2 = x_2 x_1^2 \rangle$. 
Then $\WW^1(G)=\{1\}\cup \{t\in (\C^{\times})^2 \mid t_1=-1\}$, 
while $\RR^1(G)=\{0\}$.  Thus, $G$ is locally $1$-straight, 
but not $1$-straight.  Moreover, $\Omega^1_2(G)=\emptyset$, 
yet $\sigma_2(\RR^1(G,\Q))^{\compl}=\pt$. 
\end{example}

\section{The Bieri--Neumann--Strebel--Renz invariants}
\label{sec:bnsr}

We now go over the several definitions of the $\Sigma$-invariants 
of a space $X$ (and, in particular, of a group $G$), and discuss 
the way these invariants relate to the (co)homology jumping loci. 
 
\subsection{A finite type property}
\label{subsec:fgs}
We start with a finiteness condition for chain complexes, 
following the approach of Farber, Geoghegan, and 
Sch\"utz \cite{FGS}.   Let $C=(C_i,\partial_i)_{i\ge 0}$ 
be a non-negatively graded chain complex over a ring $R$, 
and let $k$ be a non-negative integer.   

\begin{definition}
\label{def:ktype}
We say $C$ is of {\em finite $k$-type}\/ 
if there is a  chain complex $C'$ of finitely generated, projective 
$R$-modules and a $k$-equivalence between $C'$ and $C$, 
i.e., a chain map $C'\to C$ inducing isomorphisms 
$H_i(C')\to H_i(C)$ for $i<k$ and an epimorphism 
$H_k(C')\to H_k(C)$. 
\end{definition}

Equivalently, there is a chain complex of 
free $R$-modules, call it $D$, such that $D_i$ is finitely 
generated for all $i\le k$, and there is a chain map $D\to C$ 
inducing an isomorphism $H_*(D)\to H_*(C)$.  When $C$ 
itself is free, we have the following alternate characterization 
from \cite{FGS}.

\begin{lemma}
\label{lem:ktype}
Let $C$ be a free chain complex over a ring $R$.  
Then $C$ is of finite $k$-type if and only if $C$ 
is chain-homotopy equivalent to a chain complex $D$ 
of free $R$-modules, such that $D_i$ 
is finitely generated for all $i\leq k$.
\end{lemma}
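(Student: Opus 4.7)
The plan is to prove the two directions separately. The "$\Longleftarrow$" direction is a short truncation argument, while "$\Longrightarrow$" requires upgrading projectives to free modules and then inflating the given $k$-equivalence into a bona fide chain-homotopy equivalence by an algebraic analogue of attaching cells.

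For "$\Longleftarrow$", suppose $C$ is chain-homotopy equivalent to $D$ with $D_i$ finitely generated free for $i\le k$. Define $C'$ by brutal truncation: $C'_i=D_i$ for $i\le k$ and $C'_i=0$ for $i>k$, with differentials restricted from $D$. Each $C'_i$ is finitely generated projective, and the inclusion $C'\hookrightarrow D$ is a chain map which induces isomorphisms on $H_i$ for $i<k$ and a surjection on $H_k$, since $H_k(C')=\ker\partial^D_k$ surjects onto $\ker\partial^D_k /\im\partial^D_{k+1}=H_k(D)$. Composing with the chain-homotopy equivalence $D\to C$ yields a $k$-equivalence $C'\to C$, exhibiting finite $k$-type.

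For "$\Longrightarrow$", start with a $k$-equivalence $f\colon C'\to C$ in which $C'$ consists of finitely generated projectives. First I replace $C'$ by a chain-homotopy equivalent free complex $C''$: for each $i\ge 0$, pick a finitely generated projective $Q_i$ with $C'_i\oplus Q_i$ finitely generated free, and enlarge $C'$ by attaching acyclic summands $Q_i\xrightarrow{\id} Q_i$ in adjacent degrees; iterating upward, the resulting complex $C''$ has $C''_i$ finitely generated free for every $i\le k$, and the composite $C''\to C$ is still a $k$-equivalence. The substantive step is to inflate this $k$-equivalence to a chain-homotopy equivalence $\tilde f\colon D\to C$ while keeping $D_i=C''_i$ untouched for $i\le k$. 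Working inductively in degrees $>k$, I attach free generators to $D$ whose boundaries are prescribed so as to kill the unwanted homology classes in the mapping cone of the currently constructed map, using freeness of $C$ to lift the boundary data and thereby extend $\tilde f$ onto each newly attached generator. The resulting mapping cone is a bounded-below acyclic complex of free $R$-modules, hence contractible, which is equivalent to $\tilde f$ being a chain-homotopy equivalence; since $D_i$ is finitely generated free for $i\le k$ and free for $i>k$ by construction, the desired $D$ is at hand.

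The main obstacle is precisely this inductive construction --- the algebraic version of building a CW approximation by attaching cells of dimension $>k$ to turn a highly connected map into a homotopy equivalence. The delicate point is the simultaneous bookkeeping: at each stage one must both choose boundaries in $D_{i-1}$ that annihilate the correct classes in the mapping cone and lift compatible chains into $C_i$ to define $\tilde f$ on the new generators, and this lifting is exactly what the freeness of $C$ (assumed in the hypothesis) makes possible. Once the machinery is in place, the argument is essentially formal, and yields a chain-homotopy equivalence with the required finiteness in degrees $\le k$.
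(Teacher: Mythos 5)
Your argument is correct. Note that the paper does not actually prove this lemma---it is quoted from \cite{FGS}---so there is no in-text proof to compare against; but your two-step strategy (brutal truncation of $D$ at level $k$ for the easy direction; stabilizing the finitely generated projectives to free modules by summing with elementary contractible complexes, then attaching free generators in degrees $>k$ until the mapping cone is acyclic, hence contractible, for the converse) is the standard argument and is sound. One small localization error: the freeness of $C$ is not what lets you extend $\tilde f$ over the newly attached generators---that step only uses freeness of the new summands of $D$, since the required images are already given elements of $C$. Where the hypothesis on $C$ genuinely enters is exactly where you invoke it at the end: it guarantees that the acyclic, bounded-below mapping cone is a complex of projectives and therefore contractible, so that the constructed quasi-isomorphism is in fact a chain-homotopy equivalence.
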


\begin{remark}
\label{rem:ktype}
Suppose $\rho\colon R\to S$ is a ring morphism, and 
$C$ is a chain complex over $R$.  Then $C\otimes_{R} S$ 
naturally acquires the structure of an $S$-chain complex via 
extension of scalars.  Now, if $C$ is free, and of finite $k$-type 
over $R$, it is readily seen that $C\otimes_{R} S$ is of finite 
$k$-type over $S$. 
\end{remark}

\subsection{The $\Sigma$-invariants of a chain complex}
\label{subsec:bnsr cc}
Let $G$ be a finitely generated group, and denote by  
$\Hom(G,\R)$ the set of homomorphisms from $G$ to the additive 
group of the reals.  Clearly, this is a finite-dimensional 
$\R$-vector space, on which the multiplicative group of positive 
reals naturally acts.  After fixing an inner product on $\Hom(G,\R)$, 
the quotient space, 
\begin{equation}
\label{eq:sg}
S(G)= (\Hom(G,\R)\setminus\{0\})/\R^{+},
\end{equation}
may be identified with the unit sphere in $\Hom(G,\R)$. 
Up to homeomorphism, this sphere is determined 
by the first Betti number of $G$.  Indeed,  if $b_1(G)=n$, 
then $S(G)=S^{n-1}$; in particular, if $b_1(G)=0$, then 
$S(G)=\emptyset$.   To simplify notation, we will denote 
both a non-zero homomorphism $G\to \R$ and its 
equivalence class in $S(G)$ by the same symbol, 
and we will routinely view $S(G)$ as embedded in 
$\Hom(G,\R)$. 

Given a homomorphism $\chi\colon G\to \R$, consider the set  
$G_{\chi}=\{ g \in G \mid \chi(g)\ge 0\}$.  Clearly, $G_{\chi}$ 
is a submonoid of $G$, and the monoid ring 
$\Z{G}_{\chi}$ is a subring of the group ring $\Z{G}$. 
Thus, any $\Z{G}$-module naturally acquires the structure of a 
$\Z{G}_{\chi}$-module, by restriction of scalars.

\begin{definition}[\cite{FGS}]  
\label{def:sigma chain}
Let $C$ be a chain complex over $\Z{G}$.  For each 
integer $k\ge 0$, the {\em $k$-th 
Bieri--Neumann--Bieri--Strebel invariant}\/ of $C$ 
is the set
\begin{equation}
\label{eq:sigmakc}
\Sigma^k(C)=\{\chi\in S(G) \mid 
\text{$C$ is of finite $k$-type over $\Z{G}_{\chi}$}\}.
\end{equation}
\end{definition}
Note that $\Z{G}$ is a flat $\Z{G}_\chi$-module, and  
$\Z{G}\otimes_{\Z{G}_\chi} M \cong M$ for every 
$\Z{G}$-module $M$.  Thus, if $\Sigma^k(C)$ is 
non-empty, then $C$ must be of finite $k$-type 
over $\Z{G}$.

To a large extent, the importance of the $\Sigma$-invariants 
lies in the fact that they control the finiteness properties 
of kernels of projections to abelian quotients.  More precisely, 
let $N$ be a normal subgroup of $G$, with $G/N$ abelian. 
Define  
\begin{equation}
\label{eq:sgn}
S(G,N)=\{\chi \in S(G) \mid N\le \ker (\chi) \}.  
\end{equation}

It is readily seen that  $S(G,N)$ is the great subsphere 
obtained by intersecting the unit sphere $S(G)\subset H^1(G,\R)$ 
with the linear subspace $P_{\nu}\otimes \R$, where  
$\nu\colon G\surj G/N$ is the canonical projection, and 
$P_{\nu}=\im(\nu^*\colon H^1(G/N,\Q) \inj H^1(G,\Q))$.  
Notice also that every $\Z{G}$-module  
acquires the structure of a $\Z{N}$-module by restricting 
scalars.

\begin{theorem}[\cite{FGS}] 
\label{thm:fgs}
Let $C$ be a chain complex of free $\Z{G}$-modules, with 
$C_i$ finitely generated for $i\le k$, and let $N$ be a 
normal subgroup of $G$, with $G/N$ is abelian. 
Then $C$ is of finite $k$-type over $\Z{N}$ 
if and only if $S(G,N)\subset \Sigma^k(C)$.
\end{theorem}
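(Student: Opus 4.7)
The statement is the chain-complex analog of the Bieri--Renz characterization of when a normal subgroup $N$ of $G$ with abelian quotient is of type $F_k$, and I propose proving it along the same general lines. The key algebraic facts I would exploit are: (i) since $\chi|_N=0$, the character descends to $\bar\chi \colon G/N \to \R$, and $G_\chi/N = (G/N)_{\bar\chi}$, so choosing a set-theoretic transversal exhibits $\Z G_\chi$ as a free $\Z N$-module; and (ii) on the free part of $G/N$, $\bigcap_{\chi \in S(G,N)} G_\chi = N$, whence $\bigcap_\chi \Z G_\chi = \Z N$ inside $\Z G$.

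For the direction $(\Longrightarrow)$, assume $C$ is of finite $k$-type over $\Z N$. By Lemma~\ref{lem:ktype} there is a $\Z N$-chain equivalence $\phi \colon D \to C$ with each $D_i$ finitely generated and free over $\Z N$ for $i \le k$. Fix $\chi \in S(G,N)$. Since $\Z G_\chi$ is free (hence flat) over $\Z N$, the induced complex $\Z G_\chi \otimes_{\Z N} D$ is of finite $k$-type over $\Z G_\chi$, and the canonical $\Z G_\chi$-map $\Z G_\chi \otimes_{\Z N} D \to C$, $g \otimes d \mapsto g \cdot \phi(d)$, is a $\Z G_\chi$-chain equivalence; this is verified by passing to homology and exploiting flatness together with the $G$-action on $C$ and the chosen transversal to identify the induced complex with $C$ up to chain equivalence in low degrees. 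Hence $\chi \in \Sigma^k(C)$.

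For the direction $(\Longleftarrow)$, assume $S(G,N) \subset \Sigma^k(C)$. After replacing $N$ by its finite-index supergroup $N' = \nu^{-1}(\Tors(G/N))$---a move that does not alter the finite $k$-type condition, since $\Z N'$ is a finitely generated free $\Z N$-module---I may assume $G/N \cong \Z^r$, so that $S(G,N) \cong S^{r-1}$ is a compact sphere. For each $\chi \in S(G,N)$, $C$ is chain equivalent, over $\Z G_\chi$, to a finite-type free $\Z G_\chi$-complex. Compactness supplies a finite collection $\chi_1, \dots, \chi_m \in S(G,N)$ whose half-space monoids satisfy $\bigcap_j G_{\chi_j} = N$, and an iterated Mayer--Vietoris (or mapping-cylinder) construction over the nerve of this cover patches the various $\Z G_{\chi_j}$-complexes over their intersections $\Z G_{\chi_j} \cap \Z G_{\chi_{j'}}$ into a single $\Z N$-complex of finite $k$-type chain equivalent to $C$.

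The main obstacle lies in the $(\Longleftarrow)$ gluing: assembling the local finite-type data into one $\Z N$-complex while preserving both the chain equivalence to $C$ and the finite generation in degrees $\le k$ requires careful combinatorics for the cover of $S(G,N)$ together with compatibility of finite-type resolutions over each intersection $\Z G_{\chi_j} \cap \Z G_{\chi_{j'}}$. Either a spectral sequence argument on the nerve of the cover, or an induction on the rank $r$ of $G/N$ that builds $\Z N$ from the $\Z G_{\chi_j}$'s one direction at a time, should handle this technical heart of the proof.
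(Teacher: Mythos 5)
The paper offers no proof of Theorem \ref{thm:fgs}---it is quoted from \cite{FGS} and ultimately goes back to Bieri--Renz \cite{BR}---so I can only judge your argument on its own terms, and each direction has a genuine gap. In the forward direction, the pivotal claim that the canonical map $\Z{G}_{\chi}\otimes_{\Z{N}} D\to C$ is a ($k$-)equivalence of $\Z{G}_{\chi}$-complexes is false. Since $\Z{G}_{\chi}$ is free over $\Z{N}$, flatness gives $H_i(\Z{G}_{\chi}\otimes_{\Z{N}}D)\cong \Z{G}_{\chi}\otimes_{\Z{N}}H_i(C)$ for $i<k$, which is in general far larger than $H_i(C)$. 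Concretely: take $G=\Z=\langle t\rangle$, $N=\{1\}$, $k=1$, $C=\bigl(\Z{G}\xrightarrow{\,t-1\,}\Z{G}\bigr)$, and $D=(0\to \Z)$ mapping $1\mapsto 1$; then $D\to C$ is a $1$-equivalence over $\Z{N}=\Z$, but for $\chi(t)=1$ the induced complex has $H_0=\Z[t]$ while $H_0(C)=\Z$, so your map is not injective on $H_0$. (The conclusion is still true here, witnessed by $D'=\bigl(\Z[t]\xrightarrow{\,t-1\,}\Z[t]\bigr)$, but not by your map.) The standard proof of this direction does not induce $D$ up at all: it passes through the monoid extension $N\to G_{\chi}\to (G/N)_{\bar\chi}$ and invokes the nontrivial fact that every half-space monoid of a finitely generated abelian group is of type $\FP_{\infty}$.

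In the backward direction---the substantive half---you have located the difficulty but not resolved it. There is no reason a Mayer--Vietoris patching over the rings $\Z[G_{\chi_j}\cap G_{\chi_{j'}}]$ should preserve finite generation in degrees $\le k$, nor are these intersections flat over one another in the way such an argument would need; and this is not how \cite{BR} or \cite{FGS} proceed. The actual mechanism is a valuation-theoretic criterion: $\chi\in\Sigma^k(C)$ if and only if there is a $\Z{G}$-chain map $\varphi$ of $C$ (through degree $k$), chain homotopic to the identity, that strictly increases the $\chi$-valuation of every free generator. One then chooses finitely many $\chi_1,\dots,\chi_m\in S(G,N)$ whose images positively span $\Hom(G/N,\R)$, composes the corresponding maps and homotopies to push a finite set of generating chains into the positive cone, and extracts finite $k$-type over $\Z{N}$ by a direct-limit argument. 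Absent this idea (or a worked-out substitute), your proof of the equivalence is incomplete.
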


\subsection{The $\Sigma$-invariants of a CW-complex}
\label{subsec:fgs cw}
Let $X$ be a connected CW-complex with 
finite $1$-skeleton, and let $G=\pi_1(X,x_0)$ be its fundamental 
group.  Picking a classifying map $X\to K(G,1)$, we obtain 
an induced isomorphism, $H^1(G,\R) \isom H^1(X,\R)$, 
which identifies the respective unit spheres, 
$S(G)$ and $S(X)$.

The cell structure on $X$ lifts to a cell structure on the 
universal cover $\wX$.  Clearly, this lifted cell structure 
is invariant under the action of $G$ by deck transformations. 
Thus, the cellular chain complex $C_*(\wX,\Z)$ 
is a chain complex of (free) $\Z{G}$-modules.  

\begin{definition}
\label{def:fgs cw}
For each $k\ge 0$, the {\em $k$-th 
Bieri--Neumann--Strebel--Renz invariant}\/ 
of $X$ is the subset of $S(X)$ given by
\begin{equation}
\label{eq:sigmakx}
\Sigma^k(X,\Z)=\Sigma^k(C_*(\wX,\Z)).
\end{equation}
\end{definition}

It is shown in \cite{FGS} that  $\Sigma^k(X,\Z)$ is an open 
subset of $S(X)$, which depends only on the homotopy 
type of the space $X$.  Clearly, $\Sigma^0(X,\Z)=S(X)$.

Now let $G$ be a finitely generated group, and pick a 
classifying space $K(G,1)$.   In view of the above 
discussion, the sets 
\begin{equation}
\label{eq:sigma G}
\Sigma^k(G,\Z):=\Sigma^k(K(G,1),\Z), 
\end{equation} 
are well-defined invariants of the group $G$.  These sets, which 
live inside the unit sphere $S(G)\subset \Hom(G,\R)$,  
coincide with the classical geometric invariants of 
Bieri, Neumann and Strebel \cite{BNS} and Bieri 
and Renz \cite{BR}.

The $\Sigma$-invariants of a CW-complex and those 
of its fundamental group are related, as follows.

\begin{prop}[\cite{FGS}]
\label{prop:sigma xg}
Let $X$ be a connected CW-complex with finite $1$-skeleton.  
If $\wX$ is $k$-connected, then 
\begin{equation}
\label{eq:sigma xg}
\Sigma^k(X)=\Sigma^k(\pi_1(X)), \text{ and } \ 
\Sigma^{k+1}(X)\subseteq \Sigma^{k+1}(\pi_1(X)).
\end{equation}
\end{prop}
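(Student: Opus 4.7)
The plan is to build $Y=K(G,1)$ explicitly from $X$ by attaching high-dimensional cells, so that the cellular chain complexes of the universal covers agree in the range of degrees that controls $\Sigma^k$. Since $\widetilde{X}$ is $k$-connected, $\pi_i(X)$ vanishes for $2\le i\le k$, and we may obtain a model for $Y=K(G,1)$ by attaching to $X$ cells in dimensions $\ge k+2$, chosen to successively kill $\pi_{k+1}(X),\pi_{k+2}(X),\dots$. Choosing $G$-equivariant lifts, the cellular chain complex $C_*(\widetilde{Y},\Z)$ of free $\Z G$-modules agrees with $C_*(\widetilde{X},\Z)$ in degrees $\le k+1$, and in higher degrees decomposes as $C_i(\widetilde{X})\oplus F_i$ with $F_i$ a free $\Z G$-module.

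For the equality $\Sigma^k(X,\Z)=\Sigma^k(G,\Z)$, I would first establish the general fact that whether a chain complex $C$ is of finite $k$-type over a ring $R$ depends only on its $(k+1)$-truncation $\tau_{\le k+1}C$. Given a witness $D\to C$ as in Definition \ref{def:ktype}, truncating both sides to $\tau_{\le k+1}D\to\tau_{\le k+1}C$ preserves finite generation and projectivity, and remains a $k$-equivalence because $H_i$ for $i\le k$ is determined by degrees $\le k+1$ alone. Conversely, any witness for $\tau_{\le k+1}C$ composes with the natural chain map $\tau_{\le k+1}C\to C$, itself a $(k+1)$-equivalence, to produce one for $C$. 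Applying this to $C_*(\widetilde{X},\Z)$ and $C_*(\widetilde{Y},\Z)$, whose $(k+1)$-truncations coincide, yields the equality.

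For the inclusion $\Sigma^{k+1}(X,\Z)\subseteq\Sigma^{k+1}(G,\Z)$, take $\chi\in\Sigma^{k+1}(X,\Z)$ and a finite $(k+1)$-type witness $D\to C_*(\widetilde{X},\Z)$ over $\Z G_\chi$. Composing with the natural chain inclusion $C_*(\widetilde{X})\hookrightarrow C_*(\widetilde{Y})$ (the identity in degrees $\le k+1$ and a summand inclusion above) produces a chain map $D\to C_*(\widetilde{Y},\Z)$. In degrees $i\le k$ one has $H_i(\widetilde{X})=H_i(\widetilde{Y})$ (both equal to $\Z$ for $i=0$ and vanishing for $1\le i\le k$ by $k$-connectivity), so the isomorphisms $H_i(D)\cong H_i(\widetilde{X})$ transfer directly. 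The required epimorphism $H_{k+1}(D)\twoheadrightarrow H_{k+1}(\widetilde{Y})=0$ is automatic, since $\widetilde{Y}$ is contractible. Hence $\chi\in\Sigma^{k+1}(G,\Z)$.

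The main delicacy lies in the truncation step underlying the equality: one must verify that the chain-map condition and projectivity are preserved when cutting $D$ at degree $k+1$, which is elementary but essential. The same analysis explains why the reverse inclusion in degree $k+1$ is not expected to hold in general, since a witness $D\to C_*(\widetilde{Y})$ need not lift through $C_*(\widetilde{X})$, and the $H_{k+1}$ epimorphism condition becomes genuine for $\widetilde{X}$ (where $H_{k+1}$ may be nonzero) while being vacuous for $\widetilde{Y}$.
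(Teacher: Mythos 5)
The paper does not prove this proposition; it is quoted from \cite{FGS} without argument, so there is nothing internal to compare against. Your proof is correct and is essentially the standard one: building $K(G,1)$ from $X$ by attaching cells of dimension $\ge k+2$, observing that the brutal truncations of $C_*(\widetilde{X},\Z)$ and $C_*(\widetilde{K(G,1)},\Z)$ at degree $k+1$ coincide, and checking that finite $k$-type over $\Z{G}_{\chi}$ depends only on that truncation (both directions of your truncation lemma are needed and both are verified). The degree-$(k+1)$ inclusion via composing a witness with the subcomplex inclusion $C_*(\widetilde{X})\hookrightarrow C_*(\widetilde{Y})$, using acyclicity of $\widetilde{Y}$ to make the $H_{k+1}$ surjectivity automatic, is likewise sound, and your closing remark correctly identifies why the reverse inclusion fails in general (cf.\ Example \ref{ex:df}).
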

The last inclusion can of course be strict.  For 
instance, if $X=S^1\vee S^{k+1}$, with $k\ge 1$, 
then $\Sigma^{k+1}(X)=\emptyset$, 
though $\Sigma^{k+1}(\pi_1(X))=S^0$.  We 
shall see a more subtle occurrence of this 
phenomenon in Example \ref{ex:df}.

\subsection{Generalizations and discussion}
\label{subsec:sigmag}
More generally, if $M$ is a $\Z{G}$-module, the invariants 
$\Sigma^{k}(G, M)$ of Bieri and Renz \cite{BR} are given by 
$\Sigma^{k}(G, M)=\Sigma^i(F_{\bullet})$, where 
$F_{\bullet} \to M$ is a projective $\Z{G}$-resolution of $M$.  
In particular, we have the invariants $\Sigma^{k}(G,\k)$, 
where $\k$ is a field, viewed as a trivial $\Z{G}$-module.  
There is always an inclusion 
$\Sigma^{k}(G,\Z) \subseteq \Sigma^i(G,\k)$, 
but this inclusion may be strict, as we shall see in 
Example \ref{ex:flag rp2} below.   

An alternate definition of the $\Sigma$-invariants of 
a group is as follows. 
Recall that a monoid (in particular, a group) $G$ 
is of type $\FP_k$ if there is a projective $\Z{G}$-resolution 
$F_{\bullet}\to \Z$, with $F_i$ finitely generated, for all $i\le k$. 
In particular, $G$ is of type $\FP_1$ if and only if $G$ is finitely 
generated. We then have 
\begin{equation}
\label{eq:bnsr def}
\Sigma^k(G,\Z)=\set{\chi\in S(G) \mid 
\text{the monoid $G_{\chi}$ is of type $\FP_k$}}.
\end{equation}
These sets form a descending chain of open 
subsets of $S(G)$, starting at $\Sigma^1(G)=\Sigma^1(G,\Z)$.  
Moreover, $\Sigma^k(G,\Z)$ is non-empty only if 
$G$ is of type $\FP_k$.   

If $N$ is a normal subgroup of $G$, with $G/N$ abelian, 
then Theorem \ref{thm:fgs} implies the following result 
from \cite{BNS, BR}:  The group $N$ is of type $\FP_k$ 
if and only if  $S(G,N)\subseteq \Sigma^k(G, \Z)$. 
In particular, the kernel of an epimorphism $\chi\colon G \surj \Z$ is 
finitely generated if and only if both $\chi$ and $-\chi$ belong 
to $\Sigma^1(G)$.

One class of groups for which the BNSR invariants can be 
computed explicitly is that of one-relator groups. 
K.~Brown gave in \cite{Br87} an algorithm for 
computing $\Sigma^1(G)$ for groups $G$ in this class,  
while Bieri and Renz in \cite{BR} reinterpreted this algorithm 
in terms of Fox calculus, and showed that, for $1$-relator 
groups, $\Sigma^{k}(G,\Z)=\Sigma^1(G)$, for all $k\ge 2$. 

Another class of groups for which the $\Sigma$-invariants 
can be completely determined is that of non-trivial free products.  
Indeed, if $G_1$ and $G_2$ are two non-trivial, 
finitely generated groups, then $\Sigma^{k}(G_1*G_2,\Z)=\emptyset$, 
for all $k\ge 1$ (see, for instance, \cite{PS-plms}). 

Finally, it should be noted that the BNSR invariants 
obey some very nice product formulas.  
For instance, let $G_1$ and $G_2$ be two groups of type 
$\FF_k$, for some $k\ge 1$.  Identify the sphere 
$S(G_1\times G_2)$ with the join $S(G_1)*S(G_2)$. 
Then, for all $i\le k$, 
\begin{equation}
\label{eq:sigmaprod1} 
\Sigma^i(G_1\times G_2,\Z)^{\compl} \subseteq
\bigcup_{p+q=i} \Sigma^{p}(G_1,\Z)^{\compl} *  
\Sigma^{q}(G_2,\Z)^{\compl}, 
\end{equation}
where again $A*B$ denotes the join of two spaces, 
with the convention that $A*\emptyset =A$. 
As shown  in 
\cite[Theorem 7.4]{BNS}, the above inclusion 
holds as equality for $i=1$, i.e.,
\begin{equation}
\label{eq:sigmaprod2} 
\Sigma^1(G_1\times G_2,\Z)^{\compl} =
\Sigma^{1}(G_1,\Z)^{\compl}  \cup   
\Sigma^{1}(G_2,\Z)^{\compl}. 
\end{equation}

The general formula \eqref{eq:sigmaprod1} was established by 
Meinert (unpublished) and Gehrke \cite{Ge}.  Recently, it was 
shown by Sch\"{u}tz \cite{Sc} and Bieri--Geoghegan \cite{BiG} 
that equality holds in \eqref{eq:sigmaprod1} for all 
$i\le 3$, although equality may fail for $i\ge 4$. 
Furthermore, it was shown in \cite{BiG} that 
the analogous product formula for the $\Sigma$-invariants 
with coefficients in a field $\k$ holds as an equality, for all $i\le k$.

\subsection{Novikov homology}
\label{subsec:novikov}
In his 1987 thesis, J.-Cl.~Sikorav reinterpreted the BNS invariant 
of a finitely generated group in terms of Novikov homology. 
This interpretation was extended to all BNSR invariants 
by Bieri \cite{Bi07}, and later to the BNSR invariants of 
CW-complexes by Farber, Geoghegan and Sch\"{u}tz \cite{FGS}. 

The {\em Novikov--Sikorav completion}\/ of the group ring $\Z{G}$ 
with respect to a homomorphism $\chi\colon G\to \R$ consists of 
all formal sums $\sum_j n_j g_j$, with $n_j\in \Z$ and 
$g_j\in G$, having the property that, for each $c\in \R$, the set 
of indices $j$ for which $n_j \ne 0 $ and $\chi(g_j) \ge c$ is finite. 
With the obvious addition and multiplication, the Novikov--Sikorav 
completion, $\widehat{\Z{G}}_{\chi}$, is a ring, containing $\Z{G}$ 
as a subring; in particular,  $\widehat{\Z{G}}_{\chi}$ carries a natural 
$G$-module structure.  We refer to M.~Farber's book \cite{Far} for 
a comprehensive treatment, and to R.~Bieri \cite{Bi07} for further details. 

Let $X$ be a connected CW-complex with finite $k$-skeleton, 
and let $G=\pi_1(X,x_0)$ be its fundamental group.  

\begin{theorem}[\cite{FGS}]
\label{thm:bns novikov}
With notation as above, 
\begin{equation}
\label{eq:bnsr fgs}
\Sigma^{k}(X, \Z)= \{ \chi \in S(X) \mid
H_{i}(X, \widehat{\Z{G}}_{-\chi})=0,\: \text{for all $i\le k$} \}.
\end{equation}
\end{theorem}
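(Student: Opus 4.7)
The plan is to translate Theorem~\ref{thm:bns novikov} into a purely algebraic criterion about the cellular chain complex $C = C_*(\wX,\Z)$ of the universal cover, viewed as a complex of free $\Z G$-modules finitely generated in each degree $\le k$ (using that $X$ has finite $k$-skeleton). By Definition~\ref{def:fgs cw}, $\chi \in \Sigma^k(X,\Z)$ iff $C$ has finite $k$-type as a $\Z{G}_\chi$-chain complex; on the other side, the standard identification of twisted homology yields $H_i(X,\widehat{\Z{G}}_{-\chi}) = H_i(C\otimes_{\Z G} \widehat{\Z{G}}_{-\chi})$. Hence the theorem reduces to the following algebraic equivalence, which is the heart of the Sikorav--Bieri theory: $C$ is of finite $k$-type over $\Z{G}_\chi$ if and only if $H_i(C \otimes_{\Z G} \widehat{\Z{G}}_{-\chi}) = 0$ for all $i \le k$.

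For the forward direction, I would use Lemma~\ref{lem:ktype} to replace $C$ by a $\Z{G}_\chi$-chain-homotopy equivalent free complex $D$ with $D_i$ finitely generated for $i \le k$. Since $\widehat{\Z{G}}_{-\chi}$ is a $\Z G$-module whose induced $\Z{G}_\chi$-structure satisfies $\widehat{\Z{G}}_{-\chi} \otimes_{\Z{G}_\chi} \Z G \cong \widehat{\Z{G}}_{-\chi}$, change-of-rings identifies $C \otimes_{\Z G} \widehat{\Z{G}}_{-\chi}$ with $D \otimes_{\Z{G}_\chi} \widehat{\Z{G}}_{-\chi}$ up to chain-homotopy. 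The key algebraic observation is that, for any $g \in G$ with $\chi(g) > 0$, the geometric series $\sum_{n\ge 0} g^n$ lies in $\widehat{\Z{G}}_{-\chi}$ (since $\chi(g^n) = n\chi(g) \to +\infty$, so only finitely many $g^n$ satisfy $-\chi(g^n) \ge c$ for any $c$), so $1-g$ becomes invertible after completion. Combined with a Mittag--Leffler-type convergence argument applied to the filtration of $D \otimes_{\Z{G}_\chi} \widehat{\Z{G}}_{-\chi}$ by $\chi$-sublevels, this produces a partial chain contraction killing $H_i$ in degrees $\le k$.

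For the reverse direction, I would induct on $k$. The base $k=0$ is essentially Sikorav's original theorem: vanishing of $H_0$ with Novikov coefficients forces the trivial $\Z G$-module $\Z = H_0(C)$ to be finitely generated over $\Z{G}_\chi$, which upgrades to finite $0$-type for $C$. In the inductive step, assuming $C$ has finite $(k-1)$-type over $\Z{G}_\chi$ and $H_k(C \otimes_{\Z G} \widehat{\Z{G}}_{-\chi}) = 0$, the inductive hypothesis furnishes a model $D^{(k-1)}$ of $C$ with $D_i^{(k-1)}$ finitely generated for $i \le k-1$. The vanishing of the Novikov $H_k$ is then used to select finitely many cycles in $D^{(k-1)}_k$ (after possibly adding free summands) whose $\Z{G}_\chi$-span surjects onto the kernel of the boundary modulo images from $D^{(k-1)}_{k+1}$; replacing $D^{(k-1)}_k$ with a finite-rank free $\Z{G}_\chi$-module mapping onto this kernel promotes the model to finite $k$-type.

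The main obstacle throughout is this inductive upgrade: one must convert the qualitative vanishing of Novikov homology into the quantitative existence of a finite free basis over $\Z{G}_\chi$. This conversion relies on two ingredients that together constitute the technical heart of the argument---the flatness/localization-like behavior of $\widehat{\Z{G}}_{-\chi}$ over $\Z{G}_\chi$, and the explicit partial contractions built from geometric series in elements with positive $\chi$-value---and it is precisely these two inputs that must be marshalled carefully to arrange the change-of-rings compatibility for the chain complex $C$ of the universal cover, rather than merely for a single module.
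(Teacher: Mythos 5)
The first thing to say is that the paper does not prove Theorem~\ref{thm:bns novikov} at all: it is quoted from Farber--Geoghegan--Sch\"utz \cite{FGS} (and goes back to Sikorav for $k=1$ and Bieri for higher $k$), so there is no argument in the text to compare yours against. Your reduction to the algebraic equivalence ``$C=C_*(\wX,\Z)$ is of finite $k$-type over $\Z{G}_{\chi}$ if and only if $H_i(C\otimes_{\Z{G}}\widehat{\Z{G}}_{-\chi})=0$ for $i\le k$'' is the right starting point, and the two ingredients you isolate (invertibility of $1-g$ in the completion when $\chi(g)>0$, and an iterative truncation argument converting Novikov vanishing into finite generation over $\Z{G}_{\chi}$) are indeed the engine of the known proof.

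However, the forward direction as you describe it has a genuine gap. First, Lemma~\ref{lem:ktype} requires the complex to be \emph{free} over the ring in question, and $C$ is not free over $\Z{G}_{\chi}$ (already for $G=\Z$ the module $\Z[t,t^{-1}]$ is flat but not projective over $\Z[t]$), so the definition of finite $k$-type only hands you a $k$-equivalence $D\to C$, not a chain homotopy equivalence; since $\widehat{\Z{G}}_{-\chi}$ is not flat, this weaker datum does not pass to Novikov homology for free. Second, and more seriously, a finitely generated free $\Z{G}_{\chi}$-complex has no reason to become acyclic in degrees $\le k$ after tensoring with $\widehat{\Z{G}}_{-\chi}$: take $D=\Z{G}_{\chi}$ concentrated in degree $0$, for which the result is $\widehat{\Z{G}}_{-\chi}\ne 0$. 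What actually kills the homology is the interaction between the finite model and the ambient $\Z{G}$-structure of $C$ --- in its simplest form, Sikorav's lemma that if a $\Z{G}$-module $M$ is finitely generated over $\Z{G}_{\chi}$ by $m_1,\dots,m_r$, then writing $g^{-1}m_i=\sum_j a_{ij}m_j$ with $a_{ij}\in\Z{G}_{\chi}$ and $\chi(g)>0$ yields $(I-gA)m=0$ with $I-gA$ invertible over $\widehat{\Z{G}}_{-\chi}$, hence $\widehat{\Z{G}}_{-\chi}\otimes_{\Z{G}}M=0$. This use of the $\Z{G}$-action is absent from your sketch, and without it the claimed ``partial chain contraction'' of $D\otimes_{\Z{G}_{\chi}}\widehat{\Z{G}}_{-\chi}$ simply does not exist. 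On the reverse direction, the inductive architecture is correct, but the base case is misstated ($k=0$ is vacuous: $\Sigma^0(X,\Z)=S(X)$ and $H_0(X,\widehat{\Z{G}}_{-\chi})=0$ automatically for $\chi\ne 0$; Sikorav's theorem is the $k=1$ case), and the decisive step --- truncating a Novikov-ring primitive of a cycle at a finite $\chi$-level and absorbing the tail into a finitely generated $\Z{G}_{\chi}$-submodule --- is named but not carried out, and it is exactly there that the theorem is proved.
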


Now, every non-zero homomorphism $\chi\colon G\to \R$ factors 
as $\chi=\iota \circ \xi$, where $\xi\colon G\surj \Gamma$ is a 
surjection onto a lattice $\Gamma\cong \Z^r$ in $\R$, 
and $\iota\colon \Gamma\inj \R$ is the inclusion map.  
A Laurent polynomial  $p=\sum_{\gamma} n_{\gamma} \gamma\in 
\Z\G$ is said to be $\iota$-monic if the greatest 
element in $\iota(\supp(p))$ is $0$, and $n_0=1$;     
every such polynomial is invertible in the completion 
$\widehat{\Z\Gamma}_{\iota}$. 
We denote by $\rat$ the localization of $\Z\G$ 
at the multiplicative subset of all $\iota$-monic 
polynomials.  Using the known fact that $\rat$ is both 
a $G$-module and a PID, one may define for each $i\le k$ the 
{\em $i$-th Novikov Betti number}, $b_i(X,\chi)$,  
as the rank of the finitely generated $\rat$-module $H_i(X,\rat)$.  

\subsection{$\Sigma$-invariants and characteristic varieties}
\label{subsec:bns bound}
 
The following result from \cite{PS-plms} creates a bridge  
between the $\Sigma$-invariants of a space and 
the real points on the exponential tangent cones 
to the respective characteristic varieties.

\begin{theorem}[\cite{PS-plms}]
\label{thm:nov betti}
Let $X$ be a CW-complex with finite $k$-skeleton, for some 
$k\ge 1$, and let $\chi\in S(X)$. The following then hold.
\begin{enumerate}
\item 
If $-\chi$ belongs to $\Sigma^k(X,\Z)$, then $H_i(X,\rat)=0$, 
and so $b_i(X,\chi)=0$, for all $i\le k$. 
\item
$\chi$ does not belong to $\tau^{\R}_1(\WW^k(X))$ if and only if 
$b_i(X,\chi)=0$, for all $i\le k$.
\end{enumerate}
\end{theorem}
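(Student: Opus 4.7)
The plan is to prove the two parts by separate routes: part (1) reduces via Theorem \ref{thm:bns novikov} to a comparison between Novikov homology and $\rat$-homology, while part (2) uses Shapiro's lemma together with the support-theoretic interpretation of the characteristic varieties.

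For part (1), I would start by invoking Theorem \ref{thm:bns novikov}, which rewrites the hypothesis $-\chi \in \Sigma^k(X,\Z)$ as the vanishing $H_i(X, \widehat{\Z{G}}_{\chi}) = 0$ for all $i \le k$. The next step is to construct a comparison between $\rat$ and $\widehat{\Z{G}}_{\chi}$. Since $\Gamma \cong \Z^r$ is free abelian, the short exact sequence $1 \to \ker\xi \to G \to \Gamma \to 1$ splits, and a homomorphism section $s\colon \Gamma \inj G$ embeds $\Z\Gamma$ as a subring of $\Z G$. Each $\iota$-monic polynomial $p \in \Z\Gamma$ then lifts to an element $s(p) \in \Z G$ whose $\chi$-support is bounded above by $0$ with leading coefficient $1$, and such elements are invertible in $\widehat{\Z{G}}_{\chi}$ via the geometric series $s(p)^{-1} = \sum_{n \ge 0}(1 - s(p))^n$, since the terms $(1 - s(p))^n$ have $\chi$-support tending to $-\infty$. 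This produces an injective ring map $\rat \hookrightarrow \widehat{\Z{G}}_{\chi}$. A chain-level argument on a $\Z G$-free resolution of $\Z$ that is finitely generated in degrees $\le k$ (possible since $X$ has finite $k$-skeleton) then allows one to transfer vanishing of $H_i(X, \widehat{\Z{G}}_{\chi})$ to vanishing of $H_i(X, \rat)$ in the same range, after which $b_i(X, \chi) = \rk_{\rat} H_i(X, \rat) = 0$ is immediate.

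For part (2), I would apply Shapiro's lemma to the intermediate cover $X^{\xi} \to X$ with deck group $\Gamma$: for any $\Z\Gamma$-module $M$, viewed as a $\Z G$-module via $\xi$, one has $H_i(X, M) \cong H_i(X^{\xi}, M)$. Taking $M = \rat$ and using that $\rat$ is flat over $\Z\Gamma$, we obtain $H_i(X, \rat) \cong H_i(X^{\xi}, \Z) \otimes_{\Z\Gamma} \rat$. Since $\rat$ is a localization of the domain $\Z\Gamma$, its rank over $\rat$ equals the generic rank of $H_i(X^{\xi}, \Z)$ as a $\Z\Gamma$-module. Hence $b_i(X, \chi) = 0$ for all $i \le k$ if and only if each module $H_j(X^{\xi}, \C)$ with $j \le k$ is torsion over $\C\Gamma$, if and only if (by upper semi-continuity of twisted Betti numbers) the jump locus $\WW^k(X) \cap T_\xi$ is a proper closed subset of the subtorus $T_\xi \cong (\C^{\times})^r$ of $H^1(X,\C^{\times})^\circ$ determined by $\xi$. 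On the other hand, $\chi \in \tau^{\R}_1(\WW^k(X))$ means that the one-parameter subgroup $\{\exp(\lambda\chi) : \lambda \in \C\}$ lies in $\WW^k(X)$; this subgroup is Zariski dense in $T_\xi$, because a $\Z$-basis of $\iota(\Gamma) \subset \R$ is $\Q$-linearly independent when $r \ge 2$, and $\exp\colon \C \to \C^{\times}$ is surjective when $r = 1$. Hence this condition is equivalent to $T_\xi \subset \WW^k(X)$, and combining the two equivalences completes part (2).

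The main obstacle lies in part (1): the ring map $\rat \to \widehat{\Z{G}}_{\chi}$ constructed above is not $\Z G$-equivariant for the natural module structures, since the $\Z G$-action on $\rat$ is induced from the abelianization $\xi$ while that on $\widehat{\Z{G}}_{\chi}$ is self-multiplication. Consequently, the transfer of homology vanishing cannot be made by a naive tensor-product argument; instead, one must carefully compare the chain complexes $C_*(\wX) \otimes_{\Z G} \rat$ and $C_*(\wX) \otimes_{\Z G} \widehat{\Z{G}}_{\chi}$, exploiting the explicit form of the section $s$ and the fact that $\widehat{\Z{G}}_{\chi}$ is a domain with enough ``faithful flatness'' on the relevant finite-type part of the resolution. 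Part (2) is comparatively routine once the identification of $\WW^k(X) \cap T_\xi$ with the support of the Alexander-type invariants $H_j(X^{\xi}, \C)$ is in hand, as in the framework of \cite{PS-plms}.
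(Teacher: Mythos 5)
Part (2) of your proposal is essentially the standard argument (and the one in \cite{PS-plms}): identify $H_i(X,\rat)$ with $H_i(X^{\xi},\Z)\otimes_{\Z\Gamma}\rat$ by flatness, use that $\rat$ is a PID with fraction field the quotient field of $\Z\Gamma$ to translate vanishing of $b_i(X,\chi)$ into torsion-ness of $H_i(X^{\xi},\C)$ over $\C\Gamma$, match the union of supports with $\WW^k(X)\cap T_{\xi}$, and observe that the one-parameter subgroup $\exp(\lambda\chi)$ is Zariski dense in $T_{\xi}$, so $\chi\in\tau_1^{\R}(\WW^k(X))$ iff $T_{\xi}\subseteq\WW^k(X)$. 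That part is fine.

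Part (1), however, has genuine gaps. First, the splitting you rely on does not exist: an extension $1\to\ker\xi\to G\to\Z^r\to 1$ need not split for $r\ge 2$ (e.g.\ $F_2\twoheadrightarrow\Z^2$ has no section, since $F_2$ contains no $\Z^2$), so there is in general no ring embedding of $\Z\Gamma$, let alone of $\rat$, into $\Z G$ or $\widehat{\Z{G}}_{\chi}$. Second, even where a section exists, you correctly note that the resulting map $\rat\to\widehat{\Z{G}}_{\chi}$ is not $\Z{G}$-linear, so it induces no comparison of the twisted homology groups; the proposed repair (``a domain with enough faithful flatness on the relevant finite-type part'') is not an argument, and $\widehat{\Z{G}}_{\chi}$ is typically not a domain (already $\Z{G}$ has zero divisors whenever $G$ has torsion, and the theorem assumes nothing about $G$). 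The comparison has to run in the \emph{opposite} direction, and this is what \cite{PS-plms} (following Sikorav and Farber) does: (i) the projection $\xi$ induces a well-defined, $\Z{G}$-equivariant ring map $\widehat{\Z{G}}_{\chi}\to\widehat{\Z\Gamma}_{\iota}$ (each fiber of $\xi$ meets the support of a Novikov series in only finitely many terms, since $\chi$ is constant on fibers), and vanishing of $H_i(X,\widehat{\Z{G}}_{\chi})$ for $i\le k$ forces vanishing of $H_i(X,\widehat{\Z\Gamma}_{\iota})$ for $i\le k$, because the truncation through degree $k+1$ of the acyclic complex $C_*(\wX)\otimes_{\Z{G}}\widehat{\Z{G}}_{\chi}$ of projective modules splits, so base change preserves exactness in that range (equivalently, one base-changes the finite $k$-type property of Definition \ref{def:ktype}); and (ii) by a theorem of Sikorav--Farber, $\widehat{\Z\Gamma}_{\iota}$ is faithfully flat over the localization $\rat$, whence $H_i(X,\widehat{\Z\Gamma}_{\iota})\cong H_i(X,\rat)\otimes_{\rat}\widehat{\Z\Gamma}_{\iota}$ vanishes iff $H_i(X,\rat)$ does. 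Both ingredients (i) and (ii) are absent from your sketch, and without them the implication from $-\chi\in\Sigma^k(X,\Z)$ to $H_i(X,\rat)=0$ is not established.
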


Recall that $S(V)$ denotes the intersection of the unit sphere 
$S(X)$ with a homogeneous subvariety $V\subset H^1(X,\R)$.  
In particular, if $V=\{0\}$, then $S(V)=\emptyset$. 

\begin{corollary}[\cite{PS-plms}]
\label{cor:bns tau}
Let $X$ be a CW-complex with finite $k$-skeleton. Then, 
for all $i\le k$,
\begin{equation}
\label{eq:bns bound1}
\Sigma^i(X, \Z)\subseteq S(X) \setminus 
S( \tau^{\R}_1(\WW^i(X)) ). 
\end{equation}
\end{corollary}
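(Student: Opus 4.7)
The plan is to argue the stated inclusion by a direct chase through Theorem \ref{thm:nov betti}, using the linear structure of the exponential tangent cone to absorb a sign change. Fix a class $\chi \in \Sigma^i(X,\Z)$ in the left-hand side; my goal is to show $\chi \notin \tau^{\R}_1(\WW^i(X))$, so that $\chi$ lies in the prescribed complement of $S(\tau^{\R}_1(\WW^i(X)))$ inside $S(X)$.

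The first move is to apply Theorem \ref{thm:nov betti}\,(1) to the antipode $-\chi \in S(X)$ in place of $\chi$. Its hypothesis becomes that $-(-\chi) = \chi$ belongs to $\Sigma^i(X,\Z)$, which holds by assumption, so the conclusion gives $b_j(X,-\chi) = 0$ for all $j \le i$. Feeding these vanishings into Theorem \ref{thm:nov betti}\,(2), again applied with $-\chi$ in place of $\chi$, the biconditional there yields $-\chi \notin \tau^{\R}_1(\WW^i(X))$.

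It remains to convert this statement about $-\chi$ into the desired one about $\chi$. Here Theorem \ref{thm:tau1}\,(1) is what makes the argument go: it asserts that $\tau_1(\WW^i(X))$ is a finite union of rationally defined linear subspaces of $H^1(X,\C)$, whose set of real points $\tau^{\R}_1(\WW^i(X))$ is therefore a union of real linear subspaces, manifestly stable under the antipodal map $\chi \mapsto -\chi$. Hence $-\chi \notin \tau^{\R}_1(\WW^i(X))$ is equivalent to $\chi \notin \tau^{\R}_1(\WW^i(X))$, and combined with $\chi \in S(X)$ this places $\chi$ in $S(X) \setminus S(\tau^{\R}_1(\WW^i(X)))$, completing the proof. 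I do not foresee any genuine obstacle, since all of the homological content is packaged inside Theorem \ref{thm:nov betti}; the only bookkeeping point is the sign flip in its first part, which the linearity of $\tau_1$ renders harmless, with the degenerate case $\tau^{\R}_1(\WW^i(X)) = \{0\}$ making $S(\tau^{\R}_1(\WW^i(X)))$ empty and the inclusion trivial.
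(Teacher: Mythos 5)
Your argument is correct and is exactly the intended derivation: the paper presents Corollary \ref{cor:bns tau} as an immediate consequence of Theorem \ref{thm:nov betti}, obtained by applying part (1) to $-\chi$, feeding the resulting vanishing of the Novikov Betti numbers into part (2), and then using the symmetry of $\tau^{\R}_1(\WW^i(X))$ under the antipodal map (which, as you note, follows from Theorem \ref{thm:tau1}(1), and indeed already from the definition of $\tau_1$, since $\exp(\lambda(-z))=\exp((-\lambda)z)$). Your handling of the sign flip and of the degenerate case $\tau^{\R}_1(\WW^i(X))=\{0\}$ is exactly right.
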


Qualitatively, Corollary \ref{cor:bns tau} says that each BNSR set
$\Sigma^i(X, \Z)$ is contained in the complement of a union of 
rationally defined great subspheres.   

As noted in \cite{PS-plms}, the above bound is sharp. 
For example, if $X$ is a nilmanifold, then $\Sigma^i(X, \Z) = S(X)$, 
while $\WW^i(X,\C)=\{1\}$, and so $\tau^{\R}_1(\WW^i(X))=\{0\}$, 
for all $i\ge 1$.  Thus, the inclusion from Corollary \ref{cor:bns tau}
holds as an equality in this case. 

If the space $X$ is (locally) straight, Theorem \ref{thm:rat res} 
allows us to replace the exponential tangent cone 
by the corresponding resonance variety.  

\begin{corollary}
\label{cor:bns straight}
If $X$ is locally $k$-straight, then, for all $i\le k$,
\begin{equation}
\label{eq:sigmastraight}
\Sigma^i(X, \Z)\subseteq S(X) \setminus S( \RR^i(X,\R)).
\end{equation}
\end{corollary}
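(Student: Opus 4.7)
The plan is to derive this statement as an essentially formal consequence of the two results already in hand: the exponential tangent cone bound from Corollary \ref{cor:bns tau}, and the identification of tangent cones with resonance under the straightness hypothesis from Theorem \ref{thm:rat res}. No new input is needed; the whole content is a substitution, but one has to check carefully that the identification $\tau_1(\WW^i(X))=\RR^i(X)$, which a priori is an equality of complex subvarieties of $H^1(X,\C)$, descends to the real points that actually appear in the bound on $\Sigma^i(X,\Z)$.

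First I would invoke Corollary \ref{cor:bns tau}: since $X$ has finite $k$-skeleton and $i\le k$, we already know the inclusion
\[
\Sigma^i(X,\Z)\subseteq S(X)\setminus S(\tau_1^{\R}(\WW^i(X))).
\]
Next I would apply the local $k$-straightness assumption through Theorem \ref{thm:rat res}\eqref{rs1}, which yields $\tau_1(\WW^i(X))=\RR^i(X)$ as homogeneous subvarieties of $H^1(X,\C)=\C^n$ (with $n=b_1(X)$).

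The key verification is to pass from this complex identity to the statement about real loci. By definition, $\tau_1^{\R}(\WW^i(X))=\R^n\cap \tau_1(\WW^i(X))$, and by the base-change property of resonance recorded in Section \ref{subsec:rv}, namely $\RR^i(X,\R)=\RR^i(X,\C)\cap H^1(X,\R)$, intersecting the equality $\tau_1(\WW^i(X))=\RR^i(X,\C)$ with $\R^n=H^1(X,\R)$ gives
\[
\tau_1^{\R}(\WW^i(X))=\RR^i(X,\R).
\]
Taking the intersection of both sides with the unit sphere $S(X)\subset H^1(X,\R)$ yields $S(\tau_1^{\R}(\WW^i(X)))=S(\RR^i(X,\R))$, and substituting into the previous display finishes the proof.

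There is not really an obstacle here: the only point where one might slip is in implicitly conflating $\RR^i(X)=\RR^i(X,\C)$ with $\RR^i(X,\R)$ when taking real points, but the field-extension compatibility of the resonance varieties settles this cleanly. Accordingly, I would present the argument as a two-line deduction, with a brief sentence acknowledging that the complex identity from Theorem \ref{thm:rat res} restricts to $\R^n$ to give the needed equality of real subvarieties.
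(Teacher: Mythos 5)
Your proposal is correct and matches the paper's own (very brief) argument: combine the exponential tangent cone bound of Corollary \ref{cor:bns tau} with the identification $\tau_1(\WW^i(X))=\RR^i(X)$ from Theorem \ref{thm:rat res}\eqref{rs1}. Your extra care in checking that the complex equality restricts to real points via the field-extension compatibility of resonance is a worthwhile elaboration of a step the paper leaves implicit, but it is the same proof.
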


\begin{corollary}[\cite{PS-plms}]
\label{cor:bns 1f}
If $G$ is a $1$-formal group, then 
\begin{equation}
\label{eq:bns res bound}
\Sigma^1(G) \subseteq S(G) \setminus S( \RR^1(G,\R)).
\end{equation}
\end{corollary}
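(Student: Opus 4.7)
The plan is to combine Corollary \ref{cor:bns tau} with the consequence of $1$-formality recorded in Theorem \ref{thm:tcone}. The bulk of the work has already been done by these two inputs, so the argument here is essentially a verification that the general upper bound on $\Sigma^1$ reduces to the resonance-variety form under the $1$-formality assumption.

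If $b_1(G)=0$ then $S(G)=\emptyset$ and the inclusion is vacuous, so I would first dispense with that case and assume $b_1(G)>0$. By Proposition \ref{prop:sigma xg}, $\Sigma^1(G)=\Sigma^1(X,\Z)$ for any connected CW-complex $X$ with $\pi_1(X)=G$ and finite $2$-skeleton. I would fix such a model $X$ and apply Corollary \ref{cor:bns tau} in degree $i=1$, obtaining the intermediate bound
\[
\Sigma^1(G)\subseteq S(G)\setminus S(\tau^{\R}_1(\WW^1(X))).
\]

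The next step is to identify $\tau_1(\WW^1(X))$ with $\RR^1(G,\C)$. Since the exponential map sends $H^1(X,\C)$ into the identity component $\wG^{\circ}$, one gets for free that $\tau_1(\WW^1(X))=\tau_1(\VV^1(X))$. Using that $\VV^1(X)=\{1\}\cup \VV^1_1(X)$ and that $\tau_1$ commutes with finite unions (Theorem \ref{thm:tau1}), the computation reduces to determining $\tau_1(\VV^1_1(X))$. For a $1$-formal group this is precisely the content of Theorem \ref{thm:tcone}, which gives $\tau_1(\VV^1_1(X))=\RR^1_1(G)$; adjoining the contribution $\{0\}$ of the trivial character then yields $\tau_1(\WW^1(X))=\RR^1(G,\C)$. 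Restricting to real points gives $\tau^{\R}_1(\WW^1(X))=\RR^1(G,\R)$, and the stated inclusion follows.

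Most of the conceptual content is already packaged in Corollary \ref{cor:bns tau} and Theorem \ref{thm:tcone}; the only subtlety in this corollary is bookkeeping, namely reconciling the various conventions ($\WW$ versus $\VV$, $\RR^1$ versus $\RR^1_1$, complex versus real coefficients, and the passage from $\Sigma^1(G)$ to $\Sigma^1(X,\Z)$). There is no serious obstacle beyond this.
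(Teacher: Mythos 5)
Your argument is correct and follows essentially the same route as the paper: the general upper bound of Corollary \ref{cor:bns tau} combined with the tangent cone formula of Theorem \ref{thm:tcone} (the paper packages this via ``$1$-formal $\Rightarrow$ locally $1$-straight'' and Corollary \ref{cor:bns straight}, but the content is identical). The only nit is that requiring a model $X$ with finite $2$-skeleton presumes $G$ finitely presented, whereas a finite $1$-skeleton already suffices for all the degree-$1$ statements you invoke.
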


\section{Relating the $\Omega$-invariants and the $\Sigma$-invariants}
\label{sect:omega sigma}

In this section, we prove Theorem \ref{thm:intro main} from 
the Introduction, which essentially says the following:  
if inclusion \eqref{eq:omega schubert} is strict, then 
inclusion \eqref{eq:bns bound1} is also strict. 

\subsection{Finiteness properties of abelian covers}
\label{subsec:finite abel}

Let $X$ be a connected CW-complex with finite 
$1$-skeleton, and let $\wX$ be the universal cover, with 
group of deck transformation $G=\pi_1(X,x_0)$. 

Let $p\colon X^{\nu}\to X$ be a (connected) regular, 
free abelian cover, associated to an epimorphism 
$\nu\colon  G\surj \Z^r$.  Fix a basepoint $\tilde{x}_0\in p^{-1}(x_0)$, 
and identify the fundamental group $\pi_1(X^{\nu}, \tilde{x}_0)$ 
with $N=\ker(\nu)$.  Note that the universal cover $\widetilde{X^{\nu}}$ 
is homeomorphic to $\wX$. 

Finally, set
\begin{equation}
\label{eq:s cover}
S(X,X^{\nu})=\{\chi\in S(X) \mid p^*(\chi)=0\}.
\end{equation}
Then $S(X,X^{\nu})$ is a great sphere of 
dimension $r-1$.   In fact, 
\begin{equation}
\label{eq:sxxp}
S(X,X^{\nu})=S(X)\cap (P_{\nu}\otimes \R), 
\end{equation}
where recall $P_{\nu}$ is the $r$-plane in $H^1(X,\Q)$ 
determined by $\nu$ via the correspondence from \eqref{eq:corresp}. 
Moreover, under the identification $S(X)=S(G)$, the subsphere 
$S(X,X^{\nu})$ corresponds to $S(G,N)$.

\begin{prop}
\label{prop:fgs cw}
Let $X$ be a connected CW-complex with finite $k$-skeleton, 
and let $p\colon X^{\nu}\to X$ be a regular, free abelian cover. 
Then the chain complex $C_*(\widetilde{X^{\nu}},\Z)$ is of finite 
$k$-type over $\Z{N}$ if and only if 
$S(X,X^{\nu})\subseteq \Sigma^k(X,\Z)$.
\end{prop}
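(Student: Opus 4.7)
The plan is to deduce the proposition directly from Theorem \ref{thm:fgs} (the FGS characterization of finite $k$-type over a normal subgroup), applied to the cellular chain complex $C = C_*(\wX,\Z)$ viewed as a chain complex of free $\Z{G}$-modules. First I would record the basic identifications: via a classifying map $X\to K(G,1)$ we identify $S(X)$ with $S(G)$, and under this identification $S(X,X^{\nu})$ corresponds to $S(G,N)$, since by \eqref{eq:sxxp} the subsphere $S(X,X^{\nu})$ is cut out by $P_{\nu}\otimes \R$, which is precisely the set of $\chi$ annihilating $N=\ker(\nu)$. Moreover, since $\widetilde{X^{\nu}}$ is homeomorphic to $\wX$, we have an equality $C_*(\widetilde{X^{\nu}},\Z) = C_*(\wX,\Z)$ of chain complexes, the former being the latter viewed as a $\Z{N}$-complex by restriction of scalars along the inclusion $\Z{N} \hookrightarrow \Z{G}$.

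Next I would verify that $C = C_*(\wX,\Z)$ satisfies the hypotheses of Theorem \ref{thm:fgs}. Since $X$ has finite $k$-skeleton, each $C_i$ for $i\le k$ is a free $\Z{G}$-module of finite rank (one generator per $i$-cell of $X$), and in higher degrees $C_i$ is still free as a $\Z G$-module, which is all that Theorem \ref{thm:fgs} requires. The subgroup $N$ is normal with $G/N\cong \Z^r$ abelian, so Theorem \ref{thm:fgs} applies and yields
\[
C_*(\wX,\Z) \text{ is of finite $k$-type over } \Z{N}
\quad\Longleftrightarrow\quad
S(G,N)\subseteq \Sigma^k(C_*(\wX,\Z)).
\]

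Finally, combining this equivalence with the definition $\Sigma^k(X,\Z)=\Sigma^k(C_*(\wX,\Z))$ and the identification $S(X,X^{\nu}) = S(G,N)$ gives exactly the statement of the proposition. No step is really an obstacle here; the proposition is essentially a translation of Theorem \ref{thm:fgs} from the algebraic setting (chain complex, normal subgroup) into the topological setting (universal cover, free abelian cover). The only points that need minor care are (i) that $\Z{G}$ is free as a $\Z{N}$-module (a set of coset representatives for $G/N\cong \Z^r$ gives a basis), so the restriction of $C_*(\wX,\Z)$ to $\Z{N}$ remains a free chain complex, and (ii) that the finite generation in degrees $i\le k$ required by Theorem \ref{thm:fgs} matches precisely the assumption that $X$ has finite $k$-skeleton.
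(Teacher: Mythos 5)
Your proof is correct and follows essentially the same route as the paper: restrict scalars from $\Z{G}$ to $\Z{N}$ to identify $C_*(\widetilde{X^{\nu}},\Z)$ with $C_*(\wX,\Z)$ viewed as a $\Z{N}$-complex, then invoke Theorem \ref{thm:fgs} together with the identifications $S(X,X^{\nu})=S(G,N)$ and $\Sigma^k(X,\Z)=\Sigma^k(C_*(\wX,\Z))$. The extra care you take with the hypotheses (finite generation in degrees $\le k$, freeness of $\Z{G}$ over $\Z{N}$) is sound but not different in substance from the paper's argument.
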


\begin{proof}
Consider the free $\Z{G}$-chain complex $C=C_*(\widetilde{X},\Z)$.  
Upon restricting scalars to the subring $\Z{N}\subset \Z{G}$, the 
resulting $\Z{N}$-chain complex may be identified with 
$C_*(\widetilde{X^{\nu}},\Z)$.  The desired conclusion follows 
from Theorem \ref{thm:fgs}.
\end{proof}

\subsection{Upper bounds for the $\Sigma$- and $\Omega$-invariants}
\label{subsec:bounds}
Now assume $X$ has finite $k$-skeleton, for some $k\ge 1$.
As we just saw, great subspheres in the $\Sigma$-invariants 
indicate directions in which the corresponding covers have good 
finiteness properties. The next result compares the rational 
points on such subspheres with the corresponding 
$\Omega$-invariants. 

\begin{prop} 
\label{prop:bns cv}
Let $P\subseteq  H^1(X,\Q)$ be an $r$-dimensional 
linear subspace.  If the unit sphere in $P$ is 
included in $\Sigma^k(X, \Z)$, then 
$P$ belongs to $\Omega^k_r(X)$.
\end{prop}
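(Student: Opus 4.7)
The plan is to combine Proposition \ref{prop:fgs cw} with the scalar-extension stability noted in Remark \ref{rem:ktype}, and then unravel what finite $k$-type over $\Z$ means for Betti numbers.

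First, I would realize the $r$-plane $P$ by the correspondence \eqref{eq:corresp}: pick an epimorphism $\nu\colon G\surj \Z^r$ with $P_\nu = P$, and form the associated regular free abelian cover $p\colon X^\nu \to X$ with deck group $\Z^r$ and $N = \ker(\nu) = \pi_1(X^\nu,\tilde{x}_0)$. By formula \eqref{eq:sxxp}, the unit sphere in $P$ is exactly the great subsphere $S(X,X^\nu) = S(X) \cap (P\otimes \R)$, so the hypothesis reads $S(X,X^\nu) \subseteq \Sigma^k(X,\Z)$. Proposition \ref{prop:fgs cw} then yields that the $\Z{N}$-chain complex $C_*(\widetilde{X^\nu},\Z) = C_*(\widetilde{X},\Z)|_{\Z N}$ is of finite $k$-type over $\Z{N}$.

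Next I would push this finiteness down to $\Z$ via the augmentation $\epsilon\colon \Z{N} \to \Z$. Since $C_*(\widetilde{X^\nu},\Z)$ is a free $\Z{N}$-chain complex and is of finite $k$-type over $\Z{N}$, Remark \ref{rem:ktype} applies with $\rho=\epsilon$ and gives that
\[
C_*(\widetilde{X^\nu},\Z) \otimes_{\Z N} \Z \;\cong\; C_*(X^\nu,\Z)
\]
is of finite $k$-type as a chain complex of $\Z$-modules. Unwinding Definition \ref{def:ktype}, this means there is a chain complex $C'$ of finitely generated (free) abelian groups and a $k$-equivalence $C' \to C_*(X^\nu,\Z)$.

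Finally, the $k$-equivalence induces isomorphisms $H_i(C') \isom H_i(X^\nu,\Z)$ for $i<k$ and an epimorphism $H_k(C') \surj H_k(X^\nu,\Z)$. Each $H_i(C')$ is finitely generated over $\Z$, hence so is each $H_i(X^\nu,\Z)$ for $i\le k$; in particular $b_i(X^\nu) < \infty$ for all $i\le k$. By Definition \ref{def:df}, this says $P = P_\nu \in \Omega^k_r(X)$, which is the desired conclusion. No step looks genuinely hard; the only point requiring a moment's care is verifying that the hypotheses of Remark \ref{rem:ktype} are met, namely that the chain complex in question is free over $\Z{N}$, which is immediate since $N$ acts freely on the cells of $\widetilde{X^\nu}$.
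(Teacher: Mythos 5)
Your proof is correct and follows exactly the paper's own argument: realize $P=P_\nu$, invoke Proposition \ref{prop:fgs cw} to get finite $k$-type of $C_*(\widetilde{X^{\nu}},\Z)$ over $\Z{N}$, extend scalars along the augmentation via Remark \ref{rem:ktype}, and read off finiteness of $b_i(X^\nu)$ for $i\le k$. The extra care you take in checking freeness over $\Z{N}$ and unwinding Definition \ref{def:ktype} is welcome but not a departure from the paper.
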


\begin{proof}
Realize $P=P_{\nu}$, for some epimorphism $\nu\colon G\surj \Z^r$, 
and let $X^{\nu} \to X$ be the corresponding $\Z^r$-cover.  
By assumption, $S(X,X^{\nu}) \subseteq \Sigma^k(X,\Z)$.  
Thus, by Proposition \ref{prop:fgs cw}, the chain complex 
$C_*(\widetilde{X^{\nu}},\Z)$ is of finite $k$-type over $\Z{N}$.  

Now, in view of Remark \ref{rem:ktype}, the chain complex 
$C_*(X^{\nu},\Z)=C_*(\widetilde{X^{\nu}},\Z)\otimes_{\Z{N}} \Z$ 
is of finite $k$-type over $\Z$.  Therefore, $b_i(X^{\nu})<\infty$, 
for all $i\le k$.  Hence, $P\in \Omega^k_r(X)$. 
\end{proof}

\begin{corollary} 
\label{cor:ptau}
Let $P\subseteq  H^1(X,\Q)$ be a  linear subspace. 
If  $S(P) \subseteq \Sigma^k(X, \Z)$, 
then  $P \cap \tau_1^{\Q}(\WW^k(X))  =\{0\}$.
\end{corollary}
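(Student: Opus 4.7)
The plan is to combine the immediately preceding Proposition \ref{prop:bns cv} with Theorem \ref{thm:df cv}, and then exploit the very definition of the exponential tangent cone to get a contradiction from the finiteness statement.

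First, set $r = \dim_{\Q} P$. The hypothesis $S(P) \subseteq \Sigma^k(X,\Z)$ says exactly that the unit sphere of $P$ lies inside $\Sigma^k(X,\Z)$, so Proposition \ref{prop:bns cv} applies and yields $P \in \Omega^k_r(X)$. Invoking Theorem \ref{thm:df cv}, this means the intersection of the algebraic subtorus $\exp(P \otimes \C)$ with the characteristic variety $\WW^k(X)$ is $0$-dimensional, that is, a finite set of points.

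Now suppose, for contradiction, that there exists a nonzero element $z \in P \cap \tau_1^{\Q}(\WW^k(X))$. Since $z \in \tau_1(\WW^k(X))$, the defining property \eqref{eq:tau1} of the exponential tangent cone gives $\exp(\lambda z) \in \WW^k(X)$ for every $\lambda \in \C$. On the other hand, $z \in P \subseteq H^1(X,\Q)$ implies $\lambda z \in P \otimes \C$, and hence $\exp(\lambda z) \in \exp(P \otimes \C)$, for every $\lambda \in \C$. Consequently, the entire one-parameter subgroup $\{\exp(\lambda z) \mid \lambda \in \C\}$ is contained in the intersection $\exp(P \otimes \C) \cap \WW^k(X)$.

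The final step is to observe that this one-parameter subgroup is infinite: since $z \ne 0$ in $\Q^n$, some coordinate $z_j$ is nonzero, and the condition $\exp(\lambda z) = 1$ forces $\lambda z_j \in 2\pi i \Z$, which is a discrete subset of $\C$; hence the image of $\lambda \mapsto \exp(\lambda z)$ is infinite. This contradicts the finiteness of $\exp(P \otimes \C) \cap \WW^k(X)$ established above, completing the proof. I do not anticipate any serious obstacle: the genuine content sits in Proposition \ref{prop:bns cv} and Theorem \ref{thm:df cv}, and the corollary is merely the translation from a finiteness statement about $\exp(P \otimes \C) \cap \WW^k(X)$ into the corresponding statement about the exponential tangent cone.
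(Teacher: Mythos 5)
Your proof is correct and follows essentially the same route as the paper: both arguments first apply Proposition \ref{prop:bns cv} to conclude $P\in\Omega^k_r(X)$, and then deduce $P\cap\tau_1^{\Q}(\WW^k(X))=\{0\}$ from the upper bound on the $\Omega$-sets. The only difference is that the paper cites Corollary \ref{cor:omega schubert} for that last step, whereas you re-derive it in place from Theorem \ref{thm:df cv} via the one-parameter subgroup argument --- which is exactly how the paper justifies Corollary \ref{cor:omega schubert} in the discussion following equation \eqref{eq:grass cv}.
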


\begin{proof}
Set $r=\dim_{\Q} P$. 
From Corollary \ref{cor:omega schubert}, we know that 
$\Omega^k_r(X)$ is contained in the complement 
in $\Grass_r(H^1(X,\Q))$ to the incidence variety 
$ \sigma_r ( \tau^{\Q}_1 (\WW^k(X)) )$.  
The conclusion follows from Proposition \ref{prop:bns cv}. 
\end{proof}

We are now in a position to state and prove the 
main result of this section, which is simply a 
restatement of Theorem \ref{thm:intro main} 
from the Introduction.

\begin{theorem} 
\label{thm:bns omega}
If $X$ is a connected CW-complex with finite $k$-skeleton, then, 
for all $r\ge 1$,
\begin{equation}
\label{eq:bns omega}
\Sigma^k(X,\Z) = S(\tau^{\R}_1 (\WW^k(X)))^{\compl} 
\implies 
\Omega^k_r(X) = \sigma_r ( \tau^{\Q}_1 (\WW^k(X)) )^{\compl} . 
\end{equation}
\end{theorem}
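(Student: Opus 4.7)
The plan is to prove the contrapositive-style equivalence directly: assuming the $\Sigma$-invariant equals its established upper bound, show that the analogous equality holds for the $\Omega$-invariants by combining Proposition \ref{prop:bns cv} (which converts $\Sigma$-containment of a great subsphere into $\Omega$-membership of the corresponding plane) with a rationality argument on exponential tangent cones.

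First, I would recall that the inclusion ``$\subseteq$'' in the conclusion is already known unconditionally from Corollary \ref{cor:omega schubert}, so it suffices to establish the reverse inclusion. Fix an $r$-plane $P \in \Grass_r(H^1(X,\Q))$ such that $P \notin \sigma_r(\tau^{\Q}_1(\WW^k(X)))$, meaning $P \cap L = \{0\}$ for every $L \in \CC_k(X)$. I want to deduce $P \in \Omega^k_r(X)$.

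The second step is the key rationality observation. Each $L\in\CC_k(X)$ is, by Theorem \ref{thm:tau1}\eqref{t1} and equation \eqref{eq:dfxi}, a rationally defined linear subspace of $H^1(X,\C)$. Since both $P$ and $L$ are rationally defined, the equality $P\cap L=\{0\}$ in $H^1(X,\Q)$ implies $(P\otimes\R)\cap (L\otimes\R)=\{0\}$ in $H^1(X,\R)$: indeed, the sum $P+L$ being direct over $\Q$ forces it to be direct over $\R$. Taking the union over all $L\in\CC_k(X)$ gives
\[
(P\otimes\R)\,\cap\,\tau^{\R}_1(\WW^k(X)) = \{0\},
\]
and consequently the unit sphere $S(P)\subset P\otimes\R$ is disjoint from $S(\tau^{\R}_1(\WW^k(X)))$.

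The third step invokes the hypothesis: since $\Sigma^k(X,\Z)=S(X)\setminus S(\tau^{\R}_1(\WW^k(X)))$, the previous step yields $S(P)\subseteq \Sigma^k(X,\Z)$. Finally, Proposition \ref{prop:bns cv} applied to this $P$ gives $P\in \Omega^k_r(X)$, completing the reverse inclusion.

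The potential obstacle is the rationality step: one must be sure that the incidence condition defining $\sigma_r$ transfers correctly between $\Q$ and $\R$. This is smooth here because the exponential tangent cone decomposes as a finite union of rationally defined linear subspaces (Theorem \ref{thm:tau1}\eqref{t1}), so trivial $\Q$-intersection with each piece $L$ upgrades to trivial $\R$-intersection with $L\otimes\R$. Everything else is bookkeeping: the hypothesis is used once, Proposition \ref{prop:bns cv} is used once, and Corollary \ref{cor:omega schubert} supplies the other inclusion.
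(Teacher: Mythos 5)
Your proposal is correct and is essentially the paper's argument run in the direct rather than contradiction direction: both rest on Corollary \ref{cor:omega schubert} for the easy inclusion, Proposition \ref{prop:bns cv} as the bridge from $\Sigma$ to $\Omega$, and the rational definedness of the subspaces in $\CC_k(X)$ to pass between trivial intersection over $\Q$ and over $\R$. If anything, your explicit dimension-count justification that $P\cap L=\{0\}$ over $\Q$ upgrades to $(P\otimes\R)\cap(L\otimes\R)=\{0\}$ is slightly more careful than the paper's phrasing at the corresponding step.
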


\begin{proof}
From Corollary \ref{cor:omega schubert}, we know that 
$\Omega^k_r(X) \subseteq\sigma_r ( \tau^{\Q}_1 (\WW^k(X)) )^{\compl}$. 
Suppose this inclusion is strict.  There is then an $r$-dimensional 
linear subspace $P\subseteq H^1(X,\Q)$ such that 
\begin{enumerate} 
\item \label{y1}
$P\cap  \tau^{\Q}_1 (\WW^k(X))=\{0\}$, and 
\item \label{y2} 
$P\notin \Omega^k_r(X)$. 
\end{enumerate}

By supposition \eqref{y2}  and Proposition \ref{prop:bns cv}, 
we must have $S(P)\not\subseteq \Sigma^k(X,\Z)$.  Thus, 
there exists an element $\chi \in S(P)$ such that 
$\chi\notin \Sigma^k(X,\Z)$. 

Now, supposition \eqref{y1} and the fact that $\chi \in P$ 
imply that  $\chi \notin \tau^{\Q}_1 (\WW^k(X))$.  Since 
$\chi$ belongs to $H^1(X,\Q)$, we infer that 
$\chi\notin \tau^{\R}_1 (\WW^k(X))$.

We have shown that $\Sigma^k(X,\Z) \subsetneqq 
S(\tau^{\R}_1 (\WW^k(X)))^{\compl}$, 
a contradiction.
\end{proof}

Recall now that the incidence variety 
$\sigma_r ( \tau^{\Q}_1 (\WW^k(X)) )$ is a Zariski closed 
subset of the Grassmannian $\Grass_r(H^1(X,\Q))$. 
Recall also that the Dwyer--Fried sets $\Omega^k_1(X)$ 
are Zariski open, but that $\Omega^k_r(X)$ is not 
necessarily open, if $1<r<b_1(X)$.  We thus 
have the following immediate corollary. 

\begin{corollary} 
\label{cor:not open}
Suppose there is an integer $r\ge 2$ such that $\Omega^k_r(X)$ is 
{\em not} Zariski open in  $\Grass_r(H^1(X,\Q))$.  Then 
$\Sigma^k(X,\Z) \ne  S(\tau^{\R}_1 (\WW^k(X)))^{\compl}$. 
\end{corollary}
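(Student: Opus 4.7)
The plan is to argue by contraposition, directly invoking Theorem \ref{thm:bns omega}. Suppose, for the sake of contradiction, that equality $\Sigma^k(X,\Z) = S(\tau^{\R}_1 (\WW^k(X)))^{\compl}$ does hold. Then the hypothesis of Theorem \ref{thm:bns omega} is satisfied, and its conclusion yields
\[
\Omega^k_r(X) = \sigma_r ( \tau^{\Q}_1 (\WW^k(X)) )^{\compl}
\]
for every $r \ge 1$, in particular for the value of $r$ given in the statement.

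Next I would invoke the observation recalled immediately before the corollary: by the discussion following Corollary \ref{cor:omega schubert}, the incidence variety $\sigma_r(\tau^{\Q}_1(\WW^k(X)))$ is a Zariski closed subset of $\Grass_r(H^1(X,\Q))$ (it is a finite union of special Schubert varieties, one for each $L \in \CC_k(X)$). Consequently, its complement $\Omega^k_r(X)$ is Zariski open in $\Grass_r(H^1(X,\Q))$.

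This contradicts the hypothesis that $\Omega^k_r(X)$ fails to be Zariski open for this value of $r$, completing the contrapositive and hence establishing the corollary. There is essentially no obstacle here: the corollary is a purely formal consequence of Theorem \ref{thm:bns omega} together with the fact that $\sigma_r$ applied to a rational subspace arrangement yields a Zariski closed set. The only point that requires minor care is keeping track of the quantifier on $r$: Theorem \ref{thm:bns omega} delivers the $\Omega$-equality for \emph{all} $r \ge 1$ from a single equality of $\Sigma$-sets, so it suffices to apply the conclusion at the specific $r \ge 2$ supplied by the hypothesis.
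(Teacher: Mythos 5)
Your proposal is correct and coincides with the paper's (implicit) argument: the corollary is stated as an immediate consequence of Theorem \ref{thm:bns omega} together with the observation that $\sigma_r(\tau^{\Q}_1(\WW^k(X)))$ is Zariski closed in the Grassmannian, which is exactly the contrapositive you spell out. Nothing is missing.
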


\subsection{The straight and formal settings}
\label{subsec:cyclic}

When the space $X$ is locally $k$-straight, 
we may replace in the above the exponential tangent cone to the $k$-th 
characteristic variety of $X$ by the corresponding resonance variety. 

\begin{corollary} 
\label{cor:bns rv locstraight}
Let $X$ be a locally $k$-straight space.  Then, for all $r\ge 1$, 
\[
\Sigma^k(X,\Z) = S(X)\setminus S(\RR^k(X,\R))
\implies 
\Omega^k_r(X) = \Grass_r(H^1(X,\Q)) \setminus \sigma_r (\RR^k(X,\Q)). 
\]
\end{corollary}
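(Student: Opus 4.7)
The plan is to deduce this corollary directly from Theorem \ref{thm:bns omega} by translating every occurrence of an exponential tangent cone into the corresponding resonance variety via the straightness dictionary of Theorem \ref{thm:rat res}. Since the hypotheses of Theorem \ref{thm:bns omega} match, up to this translation, the hypothesis of the present corollary, the whole argument is a matter of verifying two identifications and then invoking the earlier implication.

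First I would use Theorem \ref{thm:rat res}\eqref{rs1}: because $X$ is locally $k$-straight, $\tau_1(\WW^k(X)) = \RR^k(X)$ as homogeneous subvarieties of $H^1(X,\C)$. Intersecting both sides with $H^1(X,\R)$ yields $\tau^{\R}_1(\WW^k(X)) = \RR^k(X,\R)$, where on the right-hand side I invoke the field-extension compatibility $\RR^k(X,\R) = \RR^k(X,\C) \cap H^1(X,\R)$ recorded in Section \ref{subsec:rv}. Intersecting instead with $H^1(X,\Q)$ (equivalently, using Theorem \ref{thm:rat res}\eqref{rs2}, which presents $\RR^k(X,\Q)$ as the union of the rational subspaces comprising $\CC_k(X)$) gives $\tau^{\Q}_1(\WW^k(X)) = \RR^k(X,\Q)$.

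With these two identifications in hand, the hypothesis $\Sigma^k(X,\Z) = S(X) \setminus S(\RR^k(X,\R))$ is literally the hypothesis $\Sigma^k(X,\Z) = S(\tau^{\R}_1 (\WW^k(X)))^{\compl}$ of Theorem \ref{thm:bns omega}. Applying that theorem produces $\Omega^k_r(X) = \sigma_r\bigl(\tau^{\Q}_1(\WW^k(X))\bigr)^{\compl}$ for every $r \ge 1$, and rewriting the right-hand side via the second identification yields $\Omega^k_r(X) = \Grass_r(H^1(X,\Q)) \setminus \sigma_r(\RR^k(X,\Q))$, which is the desired equality.

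There is essentially no obstacle here: the proof is a direct corollary obtained by transporting Theorem \ref{thm:bns omega} across the straightness dictionary. The one place to be careful is the commutation of the $\tau_1$ operation with passage to real or rational points, but this is immediate from the definitions $\tau^{\R}_1(W) = \R^n \cap \tau_1(W)$ and $\tau^{\Q}_1(W) = \Q^n \cap \tau_1(W)$ together with the set-theoretic equality $\tau_1(\WW^k(X)) = \RR^k(X)$ supplied by local $k$-straightness.
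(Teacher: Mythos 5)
Your proposal is correct and follows exactly the paper's route: the paper's proof of this corollary is literally ``Follows at once from Theorems \ref{thm:bns omega} and \ref{thm:rat res},'' and your argument just spells out the translation $\tau_1^{\R}(\WW^k(X))=\RR^k(X,\R)$ and $\tau_1^{\Q}(\WW^k(X))=\RR^k(X,\Q)$ that the paper leaves implicit. No issues.
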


\begin{proof}
Follows at once from Theorems \ref{thm:bns omega} and \ref{thm:rat res}.
\end{proof}

Recalling now that every $1$-formal space is locally 
$1$-straight (cf.~Theorem \ref{thm:tcone}), we derive  
the following corollary.

\begin{corollary} 
\label{cor:bns rv 1f}
Let $X$ be a $1$-formal space.  Then, for all $r\ge 1$, 
\[
\Sigma^1(X,\Z) = S(X)\setminus S(\RR^1(X,\R))
\implies 
\Omega^1_r(X) = \Grass_r(H^1(X,\Q)) \setminus  \sigma_r (  \RR^1(X,\Q) ). 
\]
\end{corollary}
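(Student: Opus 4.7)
The plan is to derive this corollary as an immediate specialization of Corollary \ref{cor:bns rv locstraight}, which handles the locally $k$-straight case. The key observation that bridges the formality hypothesis with the straightness framework is already recorded in the excerpt: by Theorem \ref{thm:tcone} (and the subsequent discussion in Section \ref{subsec:straight}), every $1$-formal space is locally $1$-straight. So the entire argument will consist of recognizing that the hypothesis of Corollary \ref{cor:bns rv locstraight} is satisfied in degree $k=1$.

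Concretely, I would first invoke $1$-formality of $X$ to conclude, via Theorem \ref{thm:tcone}, that $\tau_1(\WW^1(X)) = \TC_1(\WW^1(X)) = \RR^1(X)$. In particular, $X$ is locally $1$-straight in the sense of Definition \ref{def:straight}. Then, taking $k=1$ in Corollary \ref{cor:bns rv locstraight}, the hypothesis $\Sigma^1(X,\Z) = S(X)\setminus S(\RR^1(X,\R))$ yields
\[
\Omega^1_r(X) = \Grass_r(H^1(X,\Q)) \setminus \sigma_r(\RR^1(X,\Q))
\]
for every $r\ge 1$, which is exactly the conclusion.

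Since this is a direct specialization, there is no genuine obstacle — the real content lies in Theorem \ref{thm:bns omega} and in the identification of the exponential tangent cone with the resonance variety under formality. It would still be worth pointing out explicitly, in the proof, the chain of implications: $1$-formal $\Rightarrow$ locally $1$-straight (Theorem \ref{thm:tcone}) $\Rightarrow$ $\tau^{\R}_1(\WW^1(X)) = \RR^1(X,\R)$ and $\tau^{\Q}_1(\WW^1(X)) = \RR^1(X,\Q)$ (Theorem \ref{thm:rat res}), so that the hypothesis in Theorem \ref{thm:bns omega} translates term-by-term into the hypothesis of the present corollary, and the conclusion translates likewise.
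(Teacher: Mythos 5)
Your proposal matches the paper's own derivation: the paper obtains this corollary precisely by noting that every $1$-formal space is locally $1$-straight (via Theorem \ref{thm:tcone}) and then specializing Corollary \ref{cor:bns rv locstraight} to $k=1$, with Theorem \ref{thm:rat res} supplying the identification of the exponential tangent cone with the resonance variety. Your argument is correct and essentially identical.
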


Using Corollary \ref{cor:ptau} and Theorem  \ref{thm:tcone}, 
we obtain the following consequences, which partially recover 
Corollary \ref{cor:bns 1f}.

\begin{corollary} 
\label{cor:bns rv}
Let $X$ be a $1$-formal space, and $P\subseteq  H^1(X,\Q)$ 
a linear subspace.  If the unit sphere 
in $P$ is included in $\Sigma^1(X)$, then 
$P \cap \RR^1(X,\Q)  =\{0\}$.
\end{corollary}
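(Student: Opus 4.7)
The plan is to derive this directly by chaining Corollary \ref{cor:ptau} with the $1$-formal tangent cone identity from Theorem \ref{thm:tcone}. First, I would apply Corollary \ref{cor:ptau} at $k=1$: the hypothesis $S(P)\subseteq \Sigma^1(X,\Z)$ immediately yields
\[
P\cap \tau_1^{\Q}(\WW^1(X)) = \{0\}.
\]
It then remains only to identify the exponential tangent cone $\tau_1^{\Q}(\WW^1(X))$ with the rational resonance variety $\RR^1(X,\Q)$.

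To carry out this identification, I would use the decomposition $\VV^1(X)=\VV^0_1(X)\cup\VV^1_1(X)=\{1\}\cup \VV^1_1(X)$, which gives $\WW^1(X)=\{1\}\cup(\VV^1_1(X)\cap \wG^{\circ})$. Since $\tau_1$ depends only on the analytic germ at the identity and commutes with finite unions (as recorded in Section \ref{subsec:tcone}), this yields $\tau_1(\WW^1(X))=\tau_1(\VV^1_1(X))$. The $1$-formality hypothesis, via Theorem \ref{thm:tcone}, gives $\tau_1(\VV^1_1(X))=\RR^1_1(X)$. Because $\RR^1_1(X)$ is homogeneous and $\RR^1(X)=\RR^0_1(X)\cup\RR^1_1(X)=\{0\}\cup\RR^1_1(X)=\RR^1_1(X)$, I obtain $\tau_1(\WW^1(X))=\RR^1(X)$.

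The final step is bookkeeping: by Theorem \ref{thm:tau1}\eqref{t1} combined with the previous identification, $\RR^1(X)$ is a finite union of rationally defined linear subspaces, so passing to rational points commutes with taking the union, and therefore $\tau_1^{\Q}(\WW^1(X))=\RR^1(X,\Q)$. Substituting into the opening display gives $P\cap \RR^1(X,\Q)=\{0\}$, as desired. I do not anticipate any real obstacle here; the only point that must be handled with some care is to confirm that the reduction from $\VV^1(X)$ to $\WW^1(X)$ and the subsequent passage from $\C$-points to $\Q$-points are both absorbed by the formal properties of $\tau_1$, but these are precisely the features built into Theorems \ref{thm:tau1} and \ref{thm:tcone}.
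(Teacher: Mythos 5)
Your proposal is correct and follows exactly the paper's route: the paper derives this corollary by combining Corollary \ref{cor:ptau} (with $k=1$) with the $1$-formality identification $\tau_1(\WW^1(X))=\RR^1(X)$ from Theorem \ref{thm:tcone}. Your additional bookkeeping---reducing $\WW^1(X)$ to $\VV^1_1(X)$ via the germ-at-$1$ and finite-union properties of $\tau_1$, and passing to rational points via $\RR^1(X,\Q)=\RR^1(X)\cap H^1(X,\Q)$---is accurate and simply makes explicit what the paper leaves implicit.
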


\begin{corollary} 
\label{cor:bns 1fg}
Let $G$ be a $1$-formal group, and let $\chi\colon G\to \Z$ 
be a non-zero homomorphism.  If  $\{\pm \chi\} \subseteq \Sigma^1(G)$, 
then $\chi\notin \RR^1(X,\Q)$.
\end{corollary}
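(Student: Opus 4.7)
The plan is to deduce Corollary \ref{cor:bns 1fg} directly from Corollary \ref{cor:bns rv} by applying it to the one-dimensional subspace spanned by $\chi$. Concretely, take $X=K(G,1)$ (which is $1$-formal by hypothesis, so that Corollary \ref{cor:bns rv} applies) and identify $\Sigma^1(G)$ with $\Sigma^1(X,\Z)$ and $\RR^1(G,\Q)$ with $\RR^1(X,\Q)$ via the canonical isomorphism $H^1(G,\Q)\cong H^1(X,\Q)$.

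First, since $\chi\colon G\to \Z$ is a non-zero homomorphism, it defines a non-zero element of $H^1(G,\R)$, and the line $P=\Q\cdot \chi\subseteq H^1(G,\Q)$ is a $1$-dimensional rational subspace. Its unit sphere $S(P)$ is the great $0$-sphere consisting of the two antipodal classes represented by $\chi$ and $-\chi$; the hypothesis $\{\pm\chi\}\subseteq\Sigma^1(G)$ is therefore precisely the statement that $S(P)\subseteq\Sigma^1(G)$.

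Next, Corollary \ref{cor:bns rv} (applied to $X=K(G,1)$, which is $1$-formal) yields $P\cap \RR^1(G,\Q)=\{0\}$. Since $\chi$ is a non-zero element of $P$, it follows at once that $\chi\notin\RR^1(G,\Q)$, which is the desired conclusion.

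There is essentially no obstacle here: the result is a one-line specialization of Corollary \ref{cor:bns rv} to the particular case where the subspace $P$ is the rational line through $\chi$. The only minor bookkeeping is to match conventions between the group-theoretic setup (where $\Sigma^1(G)$ is defined via a $K(G,1)$ and lies in $S(G)\subseteq H^1(G,\R)$) and the space-theoretic setup of the preceding corollary, and to observe that $S(\Q\chi)=\{\pm\chi\}$ under the identification $S(G)=(\Hom(G,\R)\setminus\{0\})/\R^+$.
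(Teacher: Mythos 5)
Your proof is correct and matches the paper's intended derivation: Corollary \ref{cor:bns 1fg} is exactly the rank-one specialization of Corollary \ref{cor:bns rv} to the line $P=\Q\chi$, whose unit sphere is $\{\pm\chi\}$. The identification of $\RR^1(X,\Q)$ with $\RR^1(G,\Q)$ via $X=K(G,1)$ is the right reading of the statement.
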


The formality assumption is really necessary in the previous two 
corollaries.  For instance, let $X$ be the Heisenberg nilmanifold, i.e., 
the $S^1$-bundle over $S^1\times S^1$ with Euler number $1$.     
Then $\Sigma^1(X)=S(X)=S^1$, yet 
$\RR^1(X,\Q)=H^1(X,\Q)=\Q^2$.  

\subsection{Discussion and examples}
\label{subsec:ex}
As we shall see in the last few sections, there are several 
interesting classes of spaces for which the implication from 
Theorem \ref{thm:bns omega} holds as an equivalence.  
Nevertheless, as the next two examples show, 
neither the implication from Theorem \ref{thm:bns omega},  
nor the one from Corollary \ref{cor:bns rv 1f} can be reversed, 
in general.  We will come back to this point in Example \ref{ex:flag rp2}.

\begin{example}
\label{ex:bs group}
Consider the $1$-relator group 
$G=\langle x_1,x_2\mid x_1^{\,}x_2x_1^{-1}=x_2^2\rangle$. 
Clearly, $G_{\ab}=\Z$, and so $G$ is $1$-formal and 
$\RR^1(G)=\{0\}\subset \C$. A Fox calculus computation 
shows that $\WW^1(G)=\{1,2\}\subset \C^{\times}$; 
thus, $\Omega^1_1(G)=\pt$, and so 
$\Omega^1_1(G)=  \sigma_1(\RR^1(G,\Q))^{\compl}$.
On the other hand, algorithms from \cite{Br87, BR} show that 
$\Sigma^1(G)=\{-1\}$, whereas  
$S(\RR^1(G,\R))^{\compl}=\{\pm 1\}$. 

To see why this is the case, consider the abelianization 
map, $\ab\colon G\surj \Z$. Then $G'=\ker(\ab)$ is isomorphic 
to $\Z[1/2]$.  Hence, $H_1(G',\Q)=\Z[1/2]\otimes \Q=\Q$, which 
explains why the character $\ab$ belongs to $\Omega^1_1(G)$.
On the other hand, the group $\Z[1/2]$ is not finitely generated, 
which explains why $\{\pm \ab\}\not\subseteq \Sigma^1(G)$, 
although $-\ab\in \Sigma^1(G)$. 
\end{example}

\begin{example}  
\label{ex:rp2}
Consider the space $X=S^1\vee \RP^2$, with 
fundamental group $G=\Z*\Z_2$.  As before, 
$X$ is $1$-formal and $\RR^1(X)=\{0\}\subset \C$. 
The maximal free abelian cover $X^{\alpha}$, 
corresponding to the projection $\alpha\colon G\surj \Z$, 
is homotopy equivalent to a countably infinite wedge of 
projective planes.  Thus, $b_1(X^{\alpha})=0$, and so 
$\Omega^1_1(X)=\pt$, which equals $\sigma_1(\RR^1(X,\Q))^{\compl}$. 
On the other hand, since $G$ splits as a non-trivial free product, 
$\Sigma^1(X,\Z)=\emptyset$, which does not equal 
$S(\RR^1(X,\R))^{\compl}=S^0$. 
\end{example}

Finally, here is an example showing how Corollary \ref{cor:not open}
can be used to prove that the inclusions from \eqref{eq:bns bound1} and 
\eqref{eq:sigma xg} are proper, in general.  The construction is based 
on an example of Dwyer and Fried \cite{DF}, as revisited in more detail 
in \cite{Su-conm}. 

\begin{example}
\label{ex:df}
Let $Y = T^3 \vee S^2$.  Then $\pi_1(Y)=\Z^3$, a free abelian 
group on generators $x_1,x_2,x_3$, and $\pi_2 (Y) = \Z{\Z^3}$, 
a free module generated by the inclusion $S^2\inj Y$. 
Attaching a $3$-cell to $Y$ along a map $S^2 \to  Y$ 
representing the element $x_1-x_2+1$ in $\pi_2(Y)$, 
we obtain a CW-complex $X$, with $\pi_1(X) = \Z^3$ and 
$\pi_2(X)=\Z{\Z^3}/(x_1-x_2+1)$.  
Identifying $\widehat{\Z^3}=(\C^{\times})^3$, we have that
$\WW^2(X)=\set{ t \in  (\C^{\times})^3 \mid t_1-t_2+1= 0}$, 
and thus $\tau_1(\WW^2(X))=\{0\}$.   

Making use of Theorem \ref{thm:df cv}, we see that 
$\Omega^2_2(X)$ consists of precisely two points in 
$\Grass_2(\Q^3)=\QP^2$;  in particular, $\Omega^2_2(X)$ 
is not a Zariski open subset.   Corollary \ref{cor:not open} now shows 
that $\Sigma^2(X,\Z)\subsetneqq S(\tau_1(\WW^2(X)))^{\compl}=S^2$.   
On the other hand, $\Sigma^2(\Z^3,\Z)=S^2$; thus, 
$\Sigma^2(X,\Z)\subsetneqq \Sigma^2(\pi_1(X),\Z)$.
\end{example}

\section{Toric complexes}
\label{sect:toric}

In this section, we illustrate our techniques on a class 
of spaces that arise in toric topology, as a basic example 
of polyhedral products. These 
``toric complexes" are both straight and formal, so 
it comes as no surprise that both their $\Omega$-invariants 
and their $\Sigma$-invariants are closely related to the 
resonance varieties.  

\subsection{Toric complexes and right-angled Artin groups}
\label{subsec:toric raag}

Let $L$ be a simplicial complex with $n$ vertices, and let $T^n$
be the $n$-torus, with the standard cell decomposition, and with 
basepoint $*$ at the unique $0$-cell.   

The {\em toric complex}\/ associated to $L$, denoted $T_L$,
is the union of all subcomplexes of the form
$T^{\sigma}=\{x\in T^n \mid \text{$x_i = *$ if $i \notin \sigma$} \}$,
where $\sigma$ runs through the simplices of $L$.
Clearly, $T_L$ is a connected CW-complex; 
its $k$-cells are in one-to-one correspondence 
with the $(k-1)$-simplices of $L$.

Denote by $\sV$ the set of $0$-cells of $L$, and 
by $\sE$ the set of $1$-cells of $L$. The fundamental 
group, $G_L=\pi_1(T_L)$, is the right-angled Artin group associated 
to the graph $\Gamma=(\sV,\sE)$, with a generator $v$ for each vertex  
$v \in \sV$, and a commutation relation $vw=wv$ for each edge 
$\{v, w\} \in \sE$.  

Much is known about toric complexes and their fundamental 
groups.  For instance, the group $G_L$ has as classifying space 
the toric complex $T_{\Delta}$, where $\Delta=\Delta_{L}$ 
is the flag complex of $L$, i.e., the maximal simplicial 
complex with $1$-skeleton equal to that of $L$.  
Moreover, the homology groups of $T_L$ are torsion-free, 
while the cohomology ring $H^*(T_L,\Z)$ is 
isomorphic to the exterior Stanley-Reisner ring of $L$, 
with generators the dual classes $v^*\in H^1(T_L,\Z)$, 
and relations the 
monomials corresponding to the missing faces of $L$.  
Finally, all toric complexes are formal~spaces.

For more details and references on all this, we refer to 
\cite{PS-mathann, PS-adv, PS-plms, Su-aspm}.

\subsection{Jump loci and $\Omega$-invariants}
\label{subsec:jump omega}
The resonance and characteristic varieties of right-angled 
Artin groups and toric complexes were studied in 
\cite{PS-mathann} and \cite{DPS-duke}, with the 
complete computation achieved in \cite{PS-adv}. 
We recall here those results, in a form suited 
for our purposes. 

Let $\k$ be a coefficient field.  Fixing an ordering 
on the vertex set $\sV$ allows us to 
identify $H^1(T_L,\k^{\times})$ with the algebraic 
torus $(\k^{\times})^{\sV}=(\k^{\times})^n$ and 
$H^1(T_L,\k)$ with the vector space $\k^{\sV}=\k^n$.  
Each subset $\sW \subseteq \sV$ gives rise to 
an algebraic subtorus 
$(\k^{\times})^{\sW} \subset (\k^{\times})^{\sV}$  
and a coordinate subspace $\k^{\sW} \subset \k^{\sV}$.

In what follows, we denote by $L_{\sW}$ the subcomplex 
induced by $L$ on $\sW$, and by $\lk_K(\sigma)$ the link 
of a simplex $\sigma\in L$ in a subcomplex $K \subseteq L$. 

\begin{theorem}[\cite{PS-adv}]
\label{thm:jump toric}
Let $L$ be a simplicial complex on vertex set $\sV$.  
Then, for all $i\ge 1$, 
\begin{equation}
\label{eq:vtl}
\VV^i(T_L,\k)= \bigcup_{\sW} \, (\k^{\times})^{\sW} 
\quad \text{and}  \quad
\RR^i(T_L,\k)=\bigcup_{\sW}\: \k^{\sW},
\end{equation}
where, in both cases, the union is taken over all subsets 
$\sW \subseteq \sV$ for which there is a simplex 
$\sigma \in L_{\sV\setminus \sW}$ and an index $j\le i$ such that 
$\widetilde{H}_{j-1-\abs{\sigma}}(\lk_{L_{\sW}}(\sigma), \k)\ne 0$.
\end{theorem}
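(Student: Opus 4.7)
The plan is to derive both equalities from a single combinatorial decomposition: for every $\sW \subseteq \sV$ and every character $\rho$ (respectively, degree-one class $a$) whose ``support'' equals $\sW$, the twisted homology of $T_L$ (respectively, the Aomoto cohomology $H^*(A,\cdot a)$ with $A = H^*(T_L,\k)$) splits as a direct sum indexed by simplices $\sigma'' \in L_{\sV \setminus \sW}$, with summand the reduced homology of $\lk_{L_\sW}(\sigma'')$. Since the summands depend only on $\sW$ and not on the specific $\rho$ or $a$, the loci $\VV^i(T_L,\k)$ and $\RR^i(T_L,\k)$ will automatically be unions of the coordinate subtori $(\k^\times)^\sW$ and coordinate subspaces $\k^\sW$ asserted in the statement, with membership condition given precisely by non-vanishing of one of these summands for some $j \le i$.

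On the resonance side, the computation is purely algebraic. The ring $A$ is the exterior Stanley--Reisner ring $\Lambda(v^* : v \in \sV)/I_L$, with $\k$-basis $\{e_\sigma\}_{\sigma \in L}$. For $a = \sum_v a_v v^*$ with $\sW = \supp(a)$, I would partition each $\sigma \in L$ as $\sigma = \sigma' \sqcup \sigma''$ with $\sigma' \subseteq \sW$ and $\sigma'' \subseteq \sV \setminus \sW$; multiplication by $a$ modifies only the $\sigma'$-factor. Regrouping by $\sigma''$, the Aomoto complex $(A,\cdot a)$ becomes a direct sum of complexes, each of which is, up to degree shift, the augmented simplicial chain complex of $\lk_{L_\sW}(\sigma'')$. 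This delivers the decomposition
\[
H^j(A, \cdot a) \;\cong\; \bigoplus_{\sigma'' \in L_{\sV \setminus \sW}} \widetilde{H}_{j - 1 - \abs{\sigma''}}(\lk_{L_\sW}(\sigma''), \k),
\]
from which the description of $\RR^i(T_L, \k)$ is immediate.

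For the characteristic varieties I would run the same bookkeeping on the twisted cellular chain complex of the universal abelian cover. Since $\pi_1(T_L)^{\ab} = \Z^\sV$, the chain complex $C_*(\widetilde{T_L}^{\ab}, \k)$ sits inside the full Koszul complex $\Lambda(e_v : v \in \sV) \otimes \k[\Z^\sV]$ as the sub-$\k[\Z^\sV]$-module spanned by $\{e_\sigma : \sigma \in L\}$. Tensoring with $\k_\rho$ replaces each $t_v - 1$ by $\rho_v - 1$, so the resulting differential is invertible on the $\sW$-factors and trivial on the $(\sV \setminus \sW)$-factors. The same regrouping by $\sigma'' \subseteq \sV \setminus \sW$ yields the twisted analogue
\[
H_j(T_L, \k_\rho) \;\cong\; \bigoplus_{\sigma'' \in L_{\sV \setminus \sW}} \widetilde{H}_{j - 1 - \abs{\sigma''}}(\lk_{L_\sW}(\sigma''), \k),
\]
whose dependence on $\sW$ alone forces $\VV^i(T_L,\k)$ to be the stated union of coordinate subtori.

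The main obstacle lies in justifying the simplex-indexed decomposition of the (twisted) cellular chain complex of $\widetilde{T_L}^{\ab}$; once this is in place, both formulas follow in parallel. I expect to establish it either by a direct Koszul-type manipulation on the exterior sub-module generated by $L$, or, more geometrically, by a Mayer--Vietoris spectral sequence associated to the polyhedral product decomposition $T_L = \bigcup_{\sigma \in L} T^\sigma$. Some care will be needed with conventions for the empty simplex, so that the boundary cases $\sW = \emptyset$ and $\sigma'' = \emptyset$ correctly reproduce the untwisted Betti numbers of $T_L$ and the full contribution of $L_\sW$.
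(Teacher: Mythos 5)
Your strategy is essentially the one used in the source \cite{PS-adv} from which the theorem is quoted (the present paper does not reprove it): specialize the Koszul-type equivariant chain complex of the universal abelian cover at $\rho$, observe that the terms $(\rho_v-1)$ vanish for $v\notin\sW=\supp(\rho)$, and split the complex by the part $\sigma''$ of each simplex lying in $\sV\setminus\sW$; the parallel computation for the Aomoto complex $(A,\cdot a)$ gives the resonance half. The ``main obstacle'' you flag is in fact the easy part: for fixed $\sigma''\in L_{\sV\setminus\sW}$ the span of $\{e_{\sigma'\cup\sigma''} : \sigma'\in\lk_{L_\sW}(\sigma'')\}$ is visibly a subcomplex (the differential only deletes, resp.\ adds, vertices of $\sW$), and the change of basis $e_{\sigma'\cup\sigma''}\mapsto \prod_{v\in\sigma'}(\rho_v-1)\,e_{\sigma'\cup\sigma''}$ (resp.\ $\prod_{v\in\sigma'}a_v$) identifies it with the augmented simplicial (co)chain complex of $\lk_{L_\sW}(\sigma'')$ shifted by $\abs{\sigma''}+1$, since those scalars are units. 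Your attention to the empty-simplex conventions is well placed: with $\widetilde{H}_{-1}(\{\emptyset\},\k)=\k$ the case $\sW=\emptyset$ recovers $b_j(T_L)=\#\{(j-1)\text{-simplices of }L\}$.

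The one genuine gap is the final step, where you say the dependence on $\sW$ alone ``forces'' the loci to be unions of the subtori $(\k^\times)^{\sW}$ and subspaces $\k^{\sW}$. The decomposition only computes $H_j(T_L,\k_\rho)$ for $\rho$ with support \emph{exactly} $\sW$, i.e., on the stratum $(\k^\times\setminus\{1\})^{\sW}\times\{1\}^{\sV\setminus\sW}$, which is not a subtorus. To promote this to the full subtorus $(\k^\times)^{\sW}$ you must also handle the points of strictly smaller support, i.e., show that if $\sW$ satisfies the link condition then so does every $\sW'\subseteq\sW$. This is not obvious combinatorially (links do not restrict monotonically), but it follows at once from the fact that $\VV^i$ and $\RR^i$ are Zariski closed together with the density of the open stratum in $(\k^\times)^{\sW}$ when $\k$ is infinite (the finite-field case then follows from the compatibility of these loci with field extensions, quoted in Section~2). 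Add that one line and the argument is complete.
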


In degree $1$, the resonance formula takes a simpler form, 
already noted in \cite{PS-mathann}. Clearly, $\RR^1(T_{L},\k)$ 
depends only on the $1$-skeleton $\Gamma=L^{(1)}$; 
moreover, $\RR^1(T_{L},\k) = \bigcup_{\sW}  \k^{\sW}$, 
where the union is taken over all maximal subsets 
$\sW\subseteq\sV$ for which the induced graph
$\G_{\sW}$ is disconnected. 

As a consequence of Theorem \ref{thm:jump toric}, we 
see that every toric complex is a straight space. 
Theorem \ref{thm:df straight}\eqref{s2}, then, allows us to 
determine the Dwyer--Fried invariants of such spaces.

\begin{corollary}[\cite{PS-plms}, \cite{Su-aspm}]
\label{cor:df toric}
Let $L$ be a simplicial complex on vertex set $\sV$. 
Then, for all $i, r\ge 1$, 
\begin{equation}
\label{eq:omegatl}
\Omega^i_r(T_L) = \Grass_r(\Q^{\sV})\setminus  
 \sigma_r(\RR^i(T_L,\Q)). 
\end{equation}
\end{corollary}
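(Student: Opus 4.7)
The plan is to deduce the formula as an immediate consequence of Theorem \ref{thm:df straight}\eqref{s2}, once we verify that every toric complex is a straight space in the sense of Definition \ref{def:straight}.

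First, I would read off the geometric structure of the characteristic varieties from Theorem \ref{thm:jump toric}. Over $\k = \C$, we have
\[
\VV^i(T_L,\C) \,=\, \bigcup_{\sW} (\C^{\times})^{\sW},
\]
a finite union of coordinate subtori of the ambient torus $(\C^{\times})^{\sV} = H^1(T_L,\C^{\times})$. Since this ambient torus is connected, $\WW^i(T_L) = \VV^i(T_L)$. Every coordinate subtorus $(\C^{\times})^{\sW}$ is a connected algebraic subtorus through the identity $1$, so in particular every component of $\WW^i(T_L)$ is an algebraic subtorus containing the origin.

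Next, I would check that the tangent cone condition in Definition \ref{def:straight} holds. The tangent space at $1$ to the subtorus $(\C^{\times})^{\sW}$ is the coordinate subspace $\C^{\sW} \subset \C^{\sV}$. Since the tangent cone construction commutes with finite unions and the indexing set $\{\sW\}$ appearing in \eqref{eq:vtl} is the same for $\VV^i$ and $\RR^i$, Theorem \ref{thm:jump toric} gives
\[
\TC_1(\WW^i(T_L)) \,=\, \bigcup_{\sW} \C^{\sW} \,=\, \RR^i(T_L,\C).
\]
Combining this with the previous paragraph, the space $T_L$ is $k$-straight for every $k \ge 1$, hence straight.

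Now I would invoke Theorem \ref{thm:df straight}\eqref{s2}, which yields
\[
\Omega^i_r(T_L) \,=\, \Grass_r(H^1(T_L,\Q)) \setminus \sigma_r(\RR^i(T_L,\Q))
\]
for all $i, r \ge 1$. Using the chosen ordering of $\sV$ to identify $H^1(T_L,\Q) = \Q^{\sV}$ gives exactly \eqref{eq:omegatl}. There is no genuine obstacle here: the entire content lies in recognizing that the explicit description of $\VV^i(T_L)$ and $\RR^i(T_L)$ in Theorem \ref{thm:jump toric} verifies the straightness hypothesis of Definition \ref{def:straight}, after which Theorem \ref{thm:df straight}\eqref{s2} delivers the result.
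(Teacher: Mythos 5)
Your proposal is correct and follows exactly the paper's route: the text preceding the corollary observes that Theorem \ref{thm:jump toric} shows every toric complex is straight (all components of $\WW^i(T_L)$ are coordinate subtori through $1$, with tangent cone equal to $\RR^i(T_L)$), and then invokes Theorem \ref{thm:df straight}\eqref{s2}. Your verification of the straightness hypothesis fills in precisely the details the paper leaves implicit.
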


\subsection{$\Sigma$-invariants}
\label{subsec:raag}

In \cite{BB}, Bestvina and Brady considered 
the ``diagonal" homomorphism $\nu\colon G_L \to \Z$, 
$v \mapsto 1$, and the finiteness properties of the 
corresponding subgroup, $N_L=\ker(\nu)$. 
One of the main results of \cite{BB} determines the maximal 
integer $k$ for which $\nu$ belongs to $\Sigma^k(G_L,\Z)$. 

The picture was completed by Meier, Meinert, and 
VanWyk \cite{MMV} and by Bux and Gonzalez \cite{BG}, 
who computed explicitly the Bieri--Neumann--Strebel--Renz 
invariants of right-angled Artin groups.  

\begin{theorem}[\cite{MMV, BG}]
\label{thm:mmv bg}
Let $L$ be a simplicial complex, and let $\Delta$ be 
the associated flag complex. 
Let  $\chi\in S(G_L)$ be a non-zero homomorphism, 
with support $\sW=\set{v \in \sV \mid \chi(v)\ne 0}$, 
and let $\k=\Z$ or a field.  
Then, $\chi\in \Sigma^{i}(G_{L},\k)$ if and only if
\begin{equation}
\label{eq:hlkdelta}
\widetilde{H}_{j}(\lk_{\Delta_{\sW}} (\sigma),\k)=0, 
\end{equation}
for all $\sigma\in \Delta_{\sV\setminus \sW}$ 
and $-1\le j\le i-\dim(\sigma)-2$.
\end{theorem}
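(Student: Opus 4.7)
The plan is to use the Novikov-homology interpretation of the $\Sigma$-invariants (Theorem \ref{thm:bns novikov}) together with a Bestvina--Brady-style Morse-theoretic analysis on the universal cover of a classifying space for $G_L$. Since $T_{\Delta}$ is a $K(G_L,1)$, we have $\Sigma^i(G_L,\k)=\Sigma^i(T_{\Delta},\k)$, and the task becomes deciding whether $H_j(T_{\Delta},\widehat{\k G_L}_{-\chi})=0$ for all $j\le i$. The universal cover $\widetilde{T_{\Delta}}$ carries a canonical $\mathrm{CAT}(0)$ cube-complex structure, and the character $\chi\colon G_L\to \R$ extends canonically to an affine, $G_L$-equivariant height function $h\colon \widetilde{T_{\Delta}}\to \R$ that is linear on each cube, with slope $\chi_v$ along any edge in the $v$-direction.

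First I would invoke the standard Morse-theoretic dictionary that translates vanishing of Novikov homology up to degree $i$ into the vanishing of reduced homology, in appropriate degrees, of the \emph{ascending links} of $0$-cells of $\widetilde{T_{\Delta}}$ with respect to $h$. Since $G_L$ acts freely and transitively on the vertex set of $\widetilde{T_{\Delta}}$, there is essentially a single ascending link $\Lambda=\Lambda(\chi)$ to analyse. The full link of a vertex consists of cube-corners, each indexed by a simplex $\tau\in\Delta$ together with a sign $\epsilon_v\in\{+,-\}$ for each $v\in\tau$, and such a corner is ascending precisely when $\epsilon_v\,\chi_v\le 0$ for every $v\in\tau\cap\sW$, with no constraint on the signs at the vertices in $\tau\cap(\sV\setminus\sW)$.

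Second I would decompose $\Lambda$ according to its ``unconstrained'' strata. Each simplex $\sigma\in \Delta_{\sV\setminus \sW}$ contributes a subcomplex of $\Lambda$ consisting of those corners whose $(\sV\setminus\sW)$-coordinates form the vertex set of $\sigma$; peeling off the free sign choices at the vertices of $\sigma$ factor by factor, via iterated join/suspension identities of the form $\widetilde{H}_k(X*Y)\cong \widetilde{H}_{k-\dim Y-1}(X)$, expresses the contribution of $\sigma$ in terms of the reduced homology of $\lk_{\Delta_{\sW}}(\sigma)$, shifted by $\dim(\sigma)+1$. Requiring $\widetilde{H}_{j-1}(\Lambda,\k)=0$ for all $j\le i$ then yields exactly the condition that $\widetilde{H}_j(\lk_{\Delta_{\sW}}(\sigma),\k)=0$ for all $\sigma\in \Delta_{\sV\setminus\sW}$ and $-1\le j\le i-\dim(\sigma)-2$, which is \eqref{eq:hlkdelta}.

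The hard part will be the combinatorial identification above: making rigorous the join-style decomposition of $\Lambda$, and in particular showing that the ``partial'' ascending behaviour at $\sW$-vertices and the ``full'' ascending behaviour at $(\sV\setminus\sW)$-vertices fit together so that the contributions from distinct $\sigma$'s assemble cleanly---rather than via a delicate spectral sequence with possible cancellation---into the link-homology condition \eqref{eq:hlkdelta}. One also has to verify that the Morse-theoretic translation of Novikov-homology vanishing into ascending-link vanishing goes through uniformly for $\k=\Z$ and for any field $\k$; this relies on the local finiteness of $\widetilde{T_{\Delta}}$ and the fact that $G_L$ acts freely and cocompactly on it.
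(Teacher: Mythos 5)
This theorem is quoted in the paper from \cite{MMV, BG} and is not proved there, so there is no in-paper argument to compare against; your sketch is in the spirit of the geometric proof of Bux--Gonzalez (Bestvina--Brady Morse theory on the universal cover of $T_{\Delta}$), as opposed to the algebraic route of Meier--Meinert--VanWyk. Judged on its own terms, however, the sketch has a genuine gap at exactly the point you flag as ``the hard part,'' and the gap is not merely technical: the reduction to a \emph{single} ascending link $\Lambda$ of a vertex, followed by the claim that $\widetilde{H}_{j-1}(\Lambda,\k)=0$ for $j\le i$ is equivalent to condition \eqref{eq:hlkdelta}, is false. The inclusions $\lk_{\Delta_{\sW}}(\sigma)\hookrightarrow \Delta_{\sW}$ can induce nonzero maps on homology, and the resulting Mayer--Vietoris/spectral-sequence cancellation destroys the hoped-for direct-sum decomposition of $\widetilde{H}_*(\Lambda)$.

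Concretely, let $\Delta=w*C_5$ be the cone on a $5$-cycle (a flag complex), so $G_{\Delta}\cong G_{C_5}\times\Z$, and let $\chi$ be supported on $\sW=$ the vertices of $C_5$, with $\chi(w)=0$. Your ascending link is $\Lambda=\{w^+,w^-\}*C_5=\Sigma C_5\simeq S^2$, which satisfies $\widetilde{H}_{j-1}(\Lambda)=0$ for all $j\le 2$; your criterion would therefore place $\chi$ in $\Sigma^2(G_{\Delta},\Z)$. But condition \eqref{eq:hlkdelta} for $\sigma=\emptyset$ requires $\widetilde{H}_{1}(\Delta_{\sW})=\widetilde{H}_1(C_5)=0$, which fails; and indeed $\chi\notin\Sigma^2(G_{\Delta},\Z)$, as one checks independently from the product formula \eqref{eq:sigmaprod1} (an equality for $i\le 3$ by \cite{Sc, BiG}) together with the full-support case of the theorem applied to $G_{C_5}$. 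The source of the error is that when $\chi$ has dead vertices, $h$ is constant on entire positive-dimensional cubes, so it is not a Morse function in the Bestvina--Brady sense: each horizontal cube --- equivalently, each simplex $\sigma\in\Delta_{\sV\setminus\sW}$ --- must be treated as a separate critical cell with its own ascending link (essentially $\lk_{\Delta_{\sW}}(\sigma)$, with the degree shift $\dim(\sigma)+1$ coming from the dimension of the cube), which is precisely why the theorem's criterion is a conjunction indexed by $\sigma$ rather than a single acyclicity condition. To repair the argument you must either collapse the horizontal subcomplexes and run the Morse theory on the quotient, or follow \cite{BG} and establish essential acyclicity of the superlevel filtration cube-by-cube; in either case the ``only if'' direction additionally requires exhibiting nontrivial cycles whenever some $\widetilde{H}_{j}(\lk_{\Delta_{\sW}}(\sigma),\k)\neq 0$, which does not follow from ascending-link data alone.
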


We would like now to compare the BNSR invariants 
of a toric complex $T_L$ to the resonance varieties 
of $T_L$.  Using the fact that toric complexes 
are straight spaces, Corollary \ref{cor:bns straight} gives 
\begin{equation}
\label{eq:bns res tc}
\Sigma^i(T_L,\Z)\subseteq S(T_L)\setminus S(\RR^i(T_L,\R)).
\end{equation}

For right-angled Artin groups, we can say more.  
Comparing the description of the $\Sigma$-invariants 
of the group $G_L$ given in Theorem \ref{thm:mmv bg} 
to that of the resonance varieties of the space 
$T_{\Delta}=K(G_L,1)$  given in Theorem \ref{thm:jump toric}, 
yields the following result. 

\begin{corollary}[\cite{PS-plms}]
\label{cor:res bns raag}
Let $G_L$ be a right-angled Artin group.
For each $i\ge 0$, the following hold. 
\begin{enumerate}
\item \label{bns1}
$\Sigma^i(G_{L},\R) = S(\RR^i(G_{L},\R))^{\compl}$. 
\item \label{bns2}
$\Sigma^i(G_{L}, \Z)= S(\RR^i(G_{L}, \R))^{\compl}$, 
provided that, for every $\sigma \in \Delta$, and every 
$\sW\subseteq \sV$ with $\sigma \cap W=\emptyset$, 
the groups $\widetilde{H}_j(\lk_{\Delta_{\sW}}(\sigma),\Z)$ are  
torsion-free, for all  $j\le i-\dim(\sigma)-2$. 
\end{enumerate}
\end{corollary}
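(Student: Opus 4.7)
The plan is to prove both parts by directly comparing the two explicit combinatorial formulas already established: the description of $\RR^i(G_L, \k) = \RR^i(T_\Delta, \k)$ from Theorem \ref{thm:jump toric} (applied to the flag complex $\Delta$, since $T_\Delta = K(G_L, 1)$), and the MMV/BG description of $\Sigma^i(G_L, \k)$ from Theorem \ref{thm:mmv bg}. The bridge between the two is the elementary observation that a character $\chi \in H^1(G_L, \R) = \R^\sV$, with support $\sW_\chi = \{v \in \sV : \chi(v) \ne 0\}$, lies in a coordinate subspace $\R^\sW$ if and only if $\sW_\chi \subseteq \sW$.

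For part \eqref{bns1}, I would first invoke Corollary \ref{cor:bns straight}, applicable because toric complexes are straight, to obtain the upper bound $\Sigma^i(G_L, \R) \subseteq S(\RR^i(G_L, \R))^{\compl}$. For the reverse inclusion, combining Theorem \ref{thm:jump toric} with the observation above yields: $\chi \in \RR^i(G_L, \R)$ if and only if some $\sW \supseteq \sW_\chi$ satisfies the homology condition of Theorem \ref{thm:jump toric}. Taking $\sW = \sW_\chi$, the contrapositive reads: if $\chi \notin \RR^i(G_L, \R)$, then $\sW_\chi$ itself fails the homology condition, and Theorem \ref{thm:mmv bg} with $\k = \R$ then concludes $\chi \in \Sigma^i(G_L, \R)$.

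For part \eqref{bns2}, I would reduce to part \eqref{bns1}. The inclusion $\Sigma^i(G_L, \Z) \subseteq \Sigma^i(G_L, \R)$ is standard and noted in Section \ref{subsec:sigmag}. For the reverse direction, under the stated torsion-freeness hypothesis the universal coefficient theorem yields $\widetilde H_j(\lk_{\Delta_\sW}(\sigma), \Z) = 0$ if and only if $\widetilde H_j(\lk_{\Delta_\sW}(\sigma), \R) = 0$, for all $\sigma \in \Delta$, all $\sW \subseteq \sV$ with $\sigma \cap \sW = \emptyset$, and all $j \le i - \dim\sigma - 2$. Applying Theorem \ref{thm:mmv bg} with both $\k = \Z$ and $\k = \R$ then gives $\Sigma^i(G_L, \Z) = \Sigma^i(G_L, \R)$, and part \eqref{bns1} finishes the proof.

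There is no deep obstacle—the argument is a direct comparison of the two characterizations, together with one invocation of the straight-space upper bound. The only point requiring care is the index translation: the substitution $p = j - 1 - |\sigma| = j - \dim\sigma - 2$ converts the range $j \le i$ in Theorem \ref{thm:jump toric} to the range $-1 \le p \le i - \dim\sigma - 2$ appearing in Theorem \ref{thm:mmv bg}, with discrepancies at the low end absorbed by the automatic vanishing of reduced homology below degree $-1$.
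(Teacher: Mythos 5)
Your overall strategy---a direct comparison of the combinatorial description of $\RR^i(T_\Delta,\k)$ in Theorem \ref{thm:jump toric} with the Meier--Meinert--VanWyk/Bux--Gonzalez description of $\Sigma^i(G_L,\k)$ in Theorem \ref{thm:mmv bg}---is exactly the route the paper takes, your index bookkeeping $p=j-1-\abs{\sigma}$ is right, and your reduction of part \eqref{bns2} to part \eqref{bns1} via universal coefficients under the torsion-freeness hypothesis is correct. The reverse inclusion in part \eqref{bns1} (taking $\sW=\sW_\chi$ and passing to the contrapositive) is also fine.

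There is, however, a genuine gap in the forward inclusion of part \eqref{bns1}. Corollary \ref{cor:bns straight} gives an upper bound for $\Sigma^i(X,\Z)$, not for $\Sigma^i(G_L,\R)$. Since $\Sigma^i(G,\Z)\subseteq\Sigma^i(G,\k)$ and this inclusion can be strict---Example \ref{ex:flag rp2} exhibits precisely a right-angled Artin group with $\nu\in\Sigma^2(G_\Delta,\R)\setminus\Sigma^2(G_\Delta,\Z)$---bounding the smaller set $\Sigma^i(G_L,\Z)$ says nothing about whether $\Sigma^i(G_L,\R)$ stays inside $S(\RR^i(G_L,\R))^{\compl}$. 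The inclusion $\Sigma^i(G_L,\R)\subseteq S(\RR^i(G_L,\R))^{\compl}$ must instead come from the same comparison of combinatorial data, and here lies the real content you have skipped: a character $\chi$ lies in $\RR^i(G_L,\R)$ as soon as \emph{some} $\sW\supseteq\sW_\chi$ satisfies the nonvanishing condition of Theorem \ref{thm:jump toric}, whereas the MMV/BG criterion for $\chi\notin\Sigma^i$ involves only $\sW=\sW_\chi$. So you must show that if $\sW_\chi$ is ``good'' (all the relevant link homology vanishes) then so is every superset $\sW\supseteq\sW_\chi$; equivalently, that the collection of ``bad'' $\sW$ in Theorem \ref{thm:jump toric} is closed under passing to nonempty subsets. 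This is not formal---nonvanishing homology of $\lk_{\Delta_{\sW}}(\sigma)$ does not automatically pass to the induced subcomplex $\lk_{\Delta_{\sW'}}(\sigma)$ for $\sW'\subseteq\sW$. It can be established either by a combinatorial argument (as is done in \cite{PS-plms}), or by observing that the MMV/BG criterion depends only on the support of $\chi$, that characters with support contained in $\sW$ are limits of characters with support exactly $\sW$, and that $\Sigma^i(G_L,\k)$ is open in $S(G_L)$, so its complement is closed. Either way, this step needs to be stated and justified; as written, half of part \eqref{bns1} is unproved.
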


The torsion-freeness condition from 
Corollary \ref{cor:res bns raag}\eqref{bns2} is always 
satisfied in degree $i=1$.  Thus,
\begin{equation}
\label{eq:sigma1res1}
\Sigma^1(G_L, \Z) =  S(\RR^1(G_{L}, \R))^{\compl},
\end{equation}
an equality already proved (by different methods) 
in \cite{PS-mathann}.
Nevertheless, the condition is not always satisfied 
in higher degrees, thus leading to situations where 
the equality from Corollary \ref{cor:res bns raag}\eqref{bns2} 
fails.  The next example (extracted from \cite{PS-plms}) 
illustrates this phenomenon, while also showing that the 
implication from Theorem \ref{thm:bns omega} cannot 
always be reversed, even for right-angled Artin groups. 

\begin{example}
\label{ex:flag rp2}
Let $\Delta$ be a flag triangulation of the real projective plane, 
$\RP^2$, and let $\nu\colon G_{\Delta}\to \Z$ be the diagonal 
homomorphism.  
Then $\nu\not\in \Sigma^2( G_{\Delta}, \Z)$, even though 
$\nu \in \Sigma^2( G_{\Delta}, \R)$. 
Consequently, 
\begin{equation}
\label{eq:flag rp2}
\Sigma^2( G_{\Delta}, \Z) \subsetneqq 
S(\RR^2(G_{\Delta}, \R))^{\compl},
\end{equation}
although $\Omega^2_r(G_{\Delta}) = 
\sigma_r(\RR^2(T_{\Delta},\Q))^{\compl}$, 
for all $r\ge 1$.  
\end{example}

\section{Quasi-projective varieties}
\label{sec:qproj}

We now discuss the cohomology jumping loci, the Dwyer--Fried 
invariants, and the Bieri--Neumann--Strebel--Renz invariants of 
smooth, complex projective and quasi-projective varieties. 

\subsection{Complex algebraic varieties}
\label{subsec:qproj}

A smooth, connected manifold $X$ is said to be a 
{\em (smooth) quasi-projective variety}\/ 
if there is a smooth, complex projective variety $\overline{X}$ and a 
normal-crossings divisor $D$ such that $X=\overline{X}\setminus D$. 
By a well-known result of Deligne, each cohomology group of a 
quasi-projective variety $X$ admits a mixed Hodge structure.   
This puts definite constraints on the topology of such varieties.   

For instance, if $X$ admits a non-singular compactification 
$\overline{X}$ with $b_1(\overline{X})=0$, the weight 
$1$ filtration on $H^1(X,\C)$ vanishes; in turn, by work 
of Morgan, this implies the $1$-formality of $X$.   
Thus, as noted by Kohno, if $X$ is the complement 
of a hypersurface in $\CP^n$, then $\pi_1(X)$ is $1$-formal.  
In general, though, quasi-projective varieties need not 
be $1$-formal.  

If $X$ is actually a (compact, smooth) projective variety, 
then a stronger statement holds:  as shown by Deligne, 
Griffiths, Morgan, and Sullivan, such a manifold 
(and, more generally, a compact K\"ahler manifold) 
is formal. In general, though, quasi-projective varieties 
are not formal, even if their fundamental groups are 
$1$-formal. 

\begin{example}
\label{ex:qp1f}
Let $T = E^{n}$ be the $n$-fold product of an elliptic 
curve $E=\C/\Z\oplus\Z$.  The closed form 
$\frac{1}{2} \sqrt{-1} \sum_{i=1}^n dz_i \wedge d\bar{z}_i = 
\sum_{i=1}^n dx_i \wedge dy_i$ defines a cohomology 
class $\omega$ in $H^{1,1}(T)\cap H^2 (T, \Z)$.  
By the Lefschetz theorem on $(1,1)$-classes 
(see~\cite[p.~163]{GH78}), $\omega$ can be 
realized as the first Chern class of an algebraic 
line bundle $L \to T$.   

Let $X$ be the complement of the zero-section of $L$; 
then $X$ is a connected, smooth, quasi-projective variety.  
Moreover, $X$ deform-retracts onto $N$, the total space of 
the circle bundle over the torus $T=(S^1)^{2n}$ with Euler 
class $\omega$.  Clearly, the Heisenberg-type nilmanifold 
$N$ is not a torus, and thus it is not formal.  In fact, as shown 
by M\u{a}cinic in \cite[Remark 5.4]{Mc}, the manifold $N$ is 
$(n-1)$-formal, but not $n$-formal.  Thus, if $n=1$, the variety 
$X$ is not $1$-formal, whereas if $n>1$, the variety $X$ is 
$1$-formal, but not formal. 
\end{example}

\subsection{Cohomology jump loci}
\label{subsec:proj jumps}
The existence of mixed Hodge structures on the cohomology 
groups of connected, smooth, complex quasi-projective varieties 
also puts definite constraints on the nature of their cohomology 
support loci.  

The structure of the characteristic varieties of such 
spaces (and, more generally, K\"{a}hler and 
quasi-K\"{a}hler manifolds) was determined 
through the work of Beauville,  Green and 
Lazarsfeld, Simpson, Campana, and 
Arapura in the 1990s.  Further improvements 
and refinements have come through the recent 
work of Budur, Libgober, Dimca, Artal-Bartolo, Cogolludo, 
and Matei.  We summarize these results,  
essentially in the form proved by Arapura, but  
in the simplified (and slightly updated) form we 
need them here.

\begin{theorem}[\cite{Ar}]
\label{thm:arapura}
Let $X=\overline{X}\setminus D$, where $\overline{X}$ 
is a smooth, projective variety and $D$ is a normal-crossings 
divisor. 
\begin{enumerate}
\item \label{a1}
If either $D=\emptyset$ or $b_1(\overline{X})=0$, 
then each characteristic variety $\VV^i(X)$ 
is a finite union of unitary translates of algebraic 
subtori of $H^1(X,\C^{\times})$. 
\item \label{a2}
In degree $i=1$, the condition that 
$b_1(\overline{X})=0$ if $D\ne \emptyset$ may be lifted.  
Furthermore, each positive-dimensional component 
of $\VV^1(X)$ is of the form $\rho \cdot T$, 
with $T$ an algebraic subtorus, and 
$\rho$ a {\em torsion}\/ character. 
\end{enumerate}
\end{theorem}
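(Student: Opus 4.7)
The plan is to follow the Hodge-theoretic strategy pioneered by Beauville, Green--Lazarsfeld, Simpson, and Arapura, in three stages of increasing geometric complexity. The guiding principle is that twisted cohomology of rank-one local systems on a (quasi-)K\"ahler manifold admits a Hodge-theoretic description that forces the jump loci to be cut out by \emph{holomorphic} conditions, and hence to be algebraic translates of subtori by torsion points.

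First I would treat the compact case ($D = \emptyset$). Identifying the identity component of $H^1(\overline{X},\C^{\times})$ with the dual torus $H^1(\overline{X},\mathcal{O})/H^1(\overline{X},\Z)$, each unitary character $\rho$ corresponds to a flat holomorphic line bundle $L_{\rho}$, and the Hodge decomposition together with degeneration of the Hodge-to-de-Rham spectral sequence gives
\[
H^i(\overline{X},\C_{\rho}) \;\cong\; \bigoplus_{p+q=i} H^q\!\bigl(\overline{X},\, L_{\rho} \otimes \Omega^p_{\overline{X}}\bigr).
\]
The key external input is the Green--Lazarsfeld/Simpson theorem, asserting that each locus $\{L \in \mathrm{Pic}^0(\overline{X}) \mid h^q(L \otimes \Omega^p_{\overline{X}}) \geq d\}$ is a finite union of torsion-translated subtori. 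Taking appropriate unions over pairs $(p,q)$ with $p+q \leq i$, and noting that the full character variety differs from its identity component only by a finite group, yields \eqref{a1} when $D = \emptyset$.

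When $D \neq \emptyset$ but $b_1(\overline{X}) = 0$, I would replace $\Omega^{\bullet}_{\overline{X}}$ by Deligne's logarithmic de Rham complex $\Omega^{\bullet}_{\overline{X}}(\log D)$ and use the Deligne canonical extension to identify $H^i(X,\C_{\rho})$ with the hypercohomology of a twisted logarithmic complex on $\overline{X}$. The hypothesis $b_1(\overline{X})=0$ ensures that $H^1(X,\C^{\times})$ is a pure algebraic torus (no unitary part coming from $\overline{X}$), so that the Green--Lazarsfeld-style jump-loci argument goes through, mutatis mutandis, to establish \eqref{a1} in this case.

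The heart of the matter, and the main obstacle, is part \eqref{a2}, where no restriction is placed on $b_1(\overline{X})$. The strategy, due to Arapura, is to show that every positive-dimensional component $\Sigma$ of $\VV^1(X)$ arises as the pullback of the character variety of a smooth orbicurve $C$ of negative orbifold Euler characteristic under an \emph{admissible map} $f \colon X \to C$ with connected generic fiber. To produce $f$ from $\Sigma$, I would run a Castelnuovo--de Franchis-type argument: extract from the tangent space to $\Sigma$ at a smooth unitary point an isotropic subspace of $H^0(X,\Omega^1(\log D))$ of dimension at least two on which all wedge products vanish, and invoke the quasi-projective version of Castelnuovo--de Franchis to produce the fibration. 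The torsion-translation factor in $\rho \cdot T$ then arises from the orbifold structure on $C$: the monodromies around the multiple fibers of $f$ are roots of unity whose orders divide the corresponding fiber multiplicities, and these account for the finite translation $\rho$. The delicate step will be controlling these orbifold multiplicities and verifying that the necessarily finite family of admissible maps $X \to C$ exhausts \emph{all} positive-dimensional components of $\VV^1(X)$.
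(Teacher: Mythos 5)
The paper offers no proof of Theorem \ref{thm:arapura}: it is stated as a citation of Arapura's work (with later refinements), so there is no internal argument to compare yours against. Your three-stage outline is a faithful road map of how the result is established in the literature, but it papers over the two points where the real work lies.

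First, the Hodge decomposition you write down computes $H^i(\overline{X},\C_{\rho})$ only for \emph{unitary} characters $\rho$, and the Green--Lazarsfeld locus lives in $\operatorname{Pic}^0(\overline{X})$, a complex torus of dimension $q=h^{0,1}$, whereas $\VV^i(X)$ sits in the character torus of complex dimension $b_1=2q$. Knowing the unitary jump loci does not by itself determine the jump loci for arbitrary (non-unitary) local systems, to which the Hodge decomposition does not apply; closing this gap is a genuine step, handled in the compact case by Simpson's comparison of the Betti and Dolbeault moduli spaces (the missing $H^0(\Omega^1)$-direction is the Higgs field), or by Arapura's lemma that a component of the jump locus containing a Zariski-dense family of unitary translates of a fixed subtorus is itself such a translate. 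Second, the torsion assertion in part~(2) is not in Arapura's paper at all---he obtains unitary translates---and is due to the later work of Budur, Dimca, and Artal Bartolo--Cogolludo--Matei that the surrounding text alludes to. The mechanism you name (monodromy around multiple fibers of an orbifold pencil) is indeed how the torsion arises, but to run it one must first show that \emph{every} positive-dimensional component is pulled back from an orbicurve of negative orbifold Euler characteristic, and the classical Castelnuovo--de Franchis argument you invoke requires an isotropic subspace of dimension at least two: as stated it cannot detect a one-dimensional translated component such as the one in the deleted $\operatorname{B}_3$ arrangement of Example \ref{ex:deleted B3}, whose underlying curve is $\C^{\times}$ with $\chi=0$ and which becomes hyperbolic only after the orbifold points are taken into account. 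One needs the orbifold (logarithmic) version of the isotropic subspace theorem precisely at that point.
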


For instance, if $C$ is a connected, smooth complex curve 
with $\chi(C)<0$, then $\VV^1(C)=H^1(C,\C^{\times})$.  
More generally, if $X$ is a smooth, quasi-projective variety, 
then every positive-dimensional component of $\VV^1(X)$ 
arises by pullback along a suitable pencil.   
More precisely, if $\rho \cdot T$ is such a component, 
then $T=f^*(H^1(C,\C^{\times}))$, for some curve $C$, 
and some holomorphic, surjective map 
$f\colon X \to C$ with connected generic fiber.  

In the presence of $1$-formality, the quasi-projectivity  
of $X$ also imposes stringent conditions on the degree~$1$ 
resonance varieties. Theorems \ref{thm:arapura} and \ref{thm:tcone} 
yield the following characterization of these varieties. 

\begin{corollary}[\cite{DPS-duke}]
\label{cor:res qp} 
Let $X$ be a $1$-formal, smooth, quasi-projective variety. 
Then $\RR^1(X)$ is a finite union of rationally defined 
linear subspaces of $H^1(X,\C)$. 
\end{corollary}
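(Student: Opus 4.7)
The plan is to combine two ingredients from the excerpt: the tangent cone formula for $1$-formal spaces, Theorem \ref{thm:tcone}, and the general rationality property of exponential tangent cones, Theorem \ref{thm:tau1}\eqref{t1}. Neither of these requires the quasi-projectivity hypothesis.

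The first step is to use $1$-formality to reduce the question from resonance varieties to characteristic varieties. Since $\tau_1$ depends only on the germ at the identity, only components of $\VV^1(X)$ passing through $1$ contribute, so $\tau_1(\VV^1(X)) = \tau_1(\WW^1(X))$ where $\WW^1(X) = \VV^1(X) \cap \wG^\circ \subset (\C^\times)^n$ with $n=b_1(X)$. Applying Theorem \ref{thm:tcone} in depth $d=1$ gives $\RR^1_1(X) = \tau_1(\VV^1_1(X))$. Combining this with the trivial identity $\RR^0_1(X) = \{0\} = \tau_1(\VV^0_1(X))$ and the fact that $\tau_1$ commutes with finite unions, I would obtain
\[
\RR^1(X) \,=\, \RR^0_1(X) \cup \RR^1_1(X) \,=\, \tau_1(\VV^1(X)) \,=\, \tau_1(\WW^1(X)).
\]

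The second step is a direct appeal to Theorem \ref{thm:tau1}\eqref{t1}: for any Zariski closed subset $W \subset (\C^\times)^n$, the exponential tangent cone $\tau_1(W)$ is a finite union of rationally defined linear subspaces of $\C^n$. Applied to $W = \WW^1(X)$, this exhibits $\RR^1(X)$ as a finite union of rationally defined linear subspaces of $H^1(X,\C) = \C^n$, completing the proof.

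There is no serious obstacle to executing this outline; the heavy lifting has already been done inside Theorem \ref{thm:tcone}, whose proof in \cite{DPS-duke} requires a substantial analysis of the Malcev Lie algebra of $\pi_1(X)$. What the quasi-projectivity hypothesis would add, via Arapura's Theorem \ref{thm:arapura}\eqref{a2}, is extra geometric content that the plan above does not exploit: each linear component of $\RR^1(X)$ would then be identified as the tangent space at $1$ to a subtorus $T \subset H^1(X,\C^\times)$ whose torsion translate $\rho \cdot T$ is a positive-dimensional irreducible component of $\VV^1(X)$.
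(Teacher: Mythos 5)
Your argument is correct, and it is worth noting that it deliberately takes a different route from the one the paper indicates. The paper derives Corollary \ref{cor:res qp} by combining Theorem \ref{thm:tcone} with Arapura's structure theorem (Theorem \ref{thm:arapura}): since every component of $\VV^1(X)$ is a torsion-translated subtorus, only the honest subtori through $1$ survive the passage to the tangent cone, and each contributes its (rationally defined, linear) tangent space at the identity, which by the tangent cone formula exhausts $\RR^1_1(X)$. You instead pair Theorem \ref{thm:tcone} with the purely general rationality statement Theorem \ref{thm:tau1}\eqref{t1}, writing $\RR^1(X)=\tau_1(\VV^1(X))=\tau_1(\WW^1(X))$ and then quoting the fact that the exponential tangent cone of \emph{any} Zariski closed subset of $(\C^{\times})^n$ is a finite union of rationally defined linear subspaces. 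The bookkeeping in your first step ($\RR^1(X)=\RR^0_1(X)\cup\RR^1_1(X)$, $\tau_1$ commuting with finite unions, and $\tau_1$ seeing only the germ at $1$) is all consistent with the definitions in Section \ref{sec:cvs}. What your route buys is generality: the conclusion holds for every $1$-formal space with finite $1$-skeleton, with no quasi-projectivity assumed --- a point the paper itself makes in the sentence immediately following Theorem \ref{thm:tcone}. What the paper's route buys, as you correctly observe at the end, is the finer geometric description available in the quasi-projective case (each subspace arises as the tangent space to a subtorus coming from a pencil, the subspaces pairwise intersect only at $0$, etc.), which is used elsewhere in Section \ref{sec:qproj} but is not part of the literal statement being proved.
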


In fact, much more is proved in \cite{DPS-duke} about those 
subspaces. For instance, any two of them intersect 
only at $0$, and the restriction of the cup-product map 
$H^1(X,\C)\wedge H^1(X,\C) \to H^2(X,\C)$ to any 
one of them has rank equal to either $0$ or $1$. 

\begin{corollary}[\cite{Su-aspm}]
\label{cor:straight qp}
If $X$ is a $1$-formal, smooth, quasi-projective 
variety, then $X$ is locally $1$-straight.  Moreover, 
$X$ is $1$-straight if and only if $\WW^1(X)$ contains no 
positive-dimensional translated subtori.
\end{corollary}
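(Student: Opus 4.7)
The plan is to verify locally $1$-straightness first, then deduce the ``moreover'' clause as a direct consequence of Arapura's classification.

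For local $1$-straightness I need to check two things: that every component of $\WW^1(X)$ passing through $1$ is an algebraic subtorus, and that $\TC_1(\WW^1(X))=\RR^1(X)$. For the first, I would invoke Theorem~\ref{thm:arapura}\eqref{a2}: every positive-dimensional component of $\VV^1(X)$ has the form $\rho\cdot T$ with $T$ an algebraic subtorus and $\rho$ a torsion character. Since $\wG^{\circ}$ is a clopen subgroup of $\wG$ (with finite quotient), any irreducible subvariety of $\wG$ is connected and therefore lies in a single coset of $\wG^{\circ}$. Hence the components of $\WW^1(X)=\VV^1(X)\cap\wG^{\circ}$ are exactly those components of $\VV^1(X)$ that happen to lie in $\wG^{\circ}$. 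If such a positive-dimensional component $\rho\cdot T$ contains the identity, then $\rho\in T\subseteq\wG^{\circ}$, forcing $\rho\cdot T=T$ to be a subtorus; the $0$-dimensional case $\{1\}$ is trivial.

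For the tangent-cone equality, I would exploit germ-invariance. Because $\wG^{\circ}$ is open in $\wG$ and every component of $\VV^1(X)$ outside $\wG^{\circ}$ lies in a coset disjoint from $\wG^{\circ}$, the analytic germs of $\WW^1(X)$ and $\VV^1(X)$ at $1$ coincide, so $\TC_1(\WW^1(X))=\TC_1(\VV^1(X))$. The $1$-formality hypothesis together with Theorem~\ref{thm:tcone} (applied in depth $d=1$, and absorbing the trivial piece $\VV^0_1(X)=\{1\}$ with $\RR^0_1(X)=\{0\}$) then identifies $\TC_1(\VV^1(X))$ with $\RR^1(X)$. Chaining these equalities finishes the verification of local $1$-straightness.

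For the ``moreover'' clause, note that once local $1$-straightness is known, being $1$-straight is equivalent to requiring that every positive-dimensional component of $\WW^1(X)$ pass through $1$. By Theorem~\ref{thm:arapura}\eqref{a2} each such component has the form $\rho\cdot T$, and it contains $1$ if and only if $\rho\in T$, i.e., if and only if $\rho\cdot T=T$ is the untranslated subtorus. Hence $X$ is $1$-straight if and only if $\WW^1(X)$ contains no positive-dimensional translated subtori. The step requiring the most care is the germ-level identification $\TC_1(\WW^1(X))=\TC_1(\VV^1(X))$, which hinges on the clopenness of $\wG^{\circ}\subseteq\wG$ and the fact that irreducible varieties are connected; everything else is a bookkeeping exercise combining the Arapura structure of $\VV^1$ with the tangent-cone formula for $1$-formal spaces.
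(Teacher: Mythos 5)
Your argument is correct and follows essentially the same route the paper intends: Arapura's structure theorem (Theorem~\ref{thm:arapura}\eqref{a2}) supplies the torsion-translated-subtorus form of the positive-dimensional components, and the tangent cone formula for $1$-formal spaces (Theorem~\ref{thm:tcone}) gives $\TC_1(\WW^1(X))=\RR^1(X)$, with the germ-invariance of $\TC_1$ handling the passage from $\VV^1$ to $\WW^1$. The only remark worth adding is that the first clause does not actually need quasi-projectivity: as noted in \S\ref{subsec:straight}, every $1$-formal space is locally $1$-straight by \cite{DPS-duke}, so Arapura's theorem is really only needed for the ``moreover'' equivalence.
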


\subsection{$\Omega$-invariants}
\label{subsec:proj omega}

The aforementioned structural results regarding the 
cohomology jump loci of smooth, quasi-projective varieties 
inform on the Dwyer--Fried sets of such  varieties.  For instance, 
Theorem \ref{thm:arapura} together with  Proposition 
\ref{prop:tau schubert} yield the following corollary. 

\begin{corollary}
\label{cor:df quasiproj}
Let $X=\overline{X}\setminus D$ be a smooth, quasi-projective 
variety with $D=\emptyset$ or $b_1(\overline{X})=0$. 
If $\WW^i(X)$ contains no positive-dimensional 
translated subtori, then 
$\Omega^i_r(X) = \sigma_r(\tau^{\Q}_1(\WW^i(X)))^{\compl}$, 
for all $r\ge 1$. 
\end{corollary}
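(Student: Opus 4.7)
The plan is to reduce the statement directly to Proposition \ref{prop:tau schubert}, whose conclusion is exactly the desired equality $\Omega^i_r(X)=\sigma_r(\tau^{\Q}_1(\WW^i(X)))^{\compl}$. To apply that proposition, I must verify its single hypothesis: that every positive-dimensional component of $\WW^i(X)$ is an algebraic subtorus of $\wG^{\circ}=H^1(X,\C^{\times})^{\circ}$.

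First, I would invoke Theorem \ref{thm:arapura}\eqref{a1}, which under either standing hypothesis ($D=\emptyset$ or $b_1(\overline{X})=0$) asserts that every component of the characteristic variety $\VV^i(X)$ is a unitary translate $\rho\cdot T$ of an algebraic subtorus $T\subseteq H^1(X,\C^{\times})$. Intersecting with the identity component, the positive-dimensional components of $\WW^i(X)=\VV^i(X)\cap\wG^{\circ}$ are precisely those cosets $\rho\cdot T$ with $\rho\in \wG^{\circ}$; they remain unitary translates of algebraic subtori, now viewed inside $\wG^{\circ}$. Combining this with the standing hypothesis that $\WW^i(X)$ contains no positive-dimensional \emph{translated} subtori (read in the usual sense: a non-trivial coset), each such component $\rho\cdot T$ must satisfy $\rho\in T$, hence coincides with $T$ itself, an algebraic subtorus through the origin.

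With the hypothesis of Proposition \ref{prop:tau schubert} thereby verified, its conclusion yields the desired equality for all $r\ge 1$. There is no real obstacle in this argument: all of the substantive content is stored in Arapura's structure theorem on one side and in Proposition \ref{prop:tau schubert} on the other, and the corollary merely records how these two inputs interact in the quasi-projective setting. The only minor subtlety worth flagging is the convention on ``translated subtorus'', without which the hypothesis would preclude all positive-dimensional components of $\WW^i(X)$ outright and trivialize the statement.
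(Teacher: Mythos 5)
Your proof is correct and matches the paper's intended argument exactly: the paper derives this corollary by combining Theorem \ref{thm:arapura} with Proposition \ref{prop:tau schubert}, precisely as you do. Your reading of ``translated subtorus'' as a nontrivial coset is also the convention the paper uses (compare Corollary \ref{cor:straight qp}), so the hypothesis verification is sound.
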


Likewise, Corollary \ref{cor:straight qp} together with 
Theorem \ref{thm:df straight} yield the following corollary. 

\begin{corollary}[\cite{Su-aspm}]
\label{cor:df 1fqp}
Let $X$ be a $1$-formal, smooth, quasi-projective variety.  
Then:
\begin{enumerate}
\item \label{qk1}
$\Omega^1_1(X) =\bP(\RR^1(X,\Q))^{\compl}$ 
and 
$\Omega^1_r(X) \subseteq \sigma_r(\RR^1(X,\Q))^{\compl}$, 
for $r\ge 2$. 
\item \label{qk2}
If $\WW^1(X)$ contains no positive-dimensional 
translated subtori, then 
$\Omega^1_r(X) = \sigma_r(\RR^1(X,\Q))^{\compl}$, 
for all $r\ge 1$. 
\end{enumerate} 
\end{corollary}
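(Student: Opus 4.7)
The plan is to derive both parts of the corollary as consequences of the straightness results already established in the excerpt, bundling together Corollary \ref{cor:straight qp}, Theorem \ref{thm:df straight}, and Proposition \ref{prop:df1}.

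First I would invoke Corollary \ref{cor:straight qp}: since $X$ is $1$-formal, smooth, and quasi-projective, $X$ is automatically locally $1$-straight. This is the only place in the argument where the hypothesis that $X$ is quasi-projective (as opposed to merely $1$-formal) gets used, and it is the conceptual heart of the proof, since it is what rules out resonance components that do not lift to subtori of $\WW^1(X)$.

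For part \eqref{qk1}, the inclusion $\Omega^1_r(X) \subseteq \sigma_r(\RR^1(X,\Q))^{\compl}$ for all $r\ge 1$ is then immediate from Theorem \ref{thm:df straight}\eqref{s1}. To upgrade this to an equality when $r=1$, I would combine Proposition \ref{prop:df1}, which gives $\Omega^1_1(X) = \bP(H^1(X,\Q))\setminus \bigcup_{L\in \CC_1(X)} \bP(L)$, with Theorem \ref{thm:rat res}\eqref{rs2}, which identifies $\RR^1(X,\Q)$ as the union $\bigcup_{L\in \CC_1(X)} L$ of the rational linear subspaces in the characteristic arrangement. Projectivizing this union and taking complements in $\bP(H^1(X,\Q))$ yields the desired equality $\Omega^1_1(X) = \bP(\RR^1(X,\Q))^{\compl}$. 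Note that $\sigma_1(V) = \bP(V)$ for any homogeneous subvariety $V$, so this is consistent with the general Schubert-variety formulation.

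For part \eqref{qk2}, the additional hypothesis that $\WW^1(X)$ contains no positive-dimensional translated subtori is exactly what upgrades $X$ from locally $1$-straight to $1$-straight, again by the second half of Corollary \ref{cor:straight qp}. The equality $\Omega^1_r(X) = \sigma_r(\RR^1(X,\Q))^{\compl}$ for all $r\ge 1$ then follows directly from Theorem \ref{thm:df straight}\eqref{s2}.

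There is no serious obstacle here: the work has been done in the cited structural results, and the proof is essentially a bookkeeping exercise stringing them together. The only mild subtlety is keeping straight the distinction between local $1$-straightness (which yields only an inclusion in the Grassmannian for $r\ge 2$) and full $1$-straightness (which yields equality), and recognizing that in projective space ($r=1$) the inclusion is automatically an equality thanks to Proposition \ref{prop:df1}, independently of whether translated components are present.
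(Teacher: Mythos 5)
Your proposal is correct and follows essentially the same route the paper indicates: Corollary \ref{cor:straight qp} supplies (local) $1$-straightness, Theorem \ref{thm:df straight} gives the inclusion and (under the no-translated-subtori hypothesis) the equality, and the $r=1$ equality is correctly filled in via Proposition \ref{prop:df1} combined with Theorem \ref{thm:rat res}\eqref{rs2}. Nothing is missing.
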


If the characteristic variety $\WW^1(X)$ contains positive-dimensional 
translated components, the resonance variety $\RR^1(X,\Q)$ 
may fail to determine all the Dwyer--Fried sets $\Omega^1_r(X)$.  
This phenomenon is made concrete by the following result. 

\begin{theorem}[\cite{Su-aspm}]
\label{thm:tt}
Let $X$ be a $1$-formal, smooth, quasi-projective variety. 
Suppose  $\WW^1(X)$ has a $1$-dimensional component 
not passing through $1$, while 
$\RR^1(X)$ has no codimension-$1$ components.
Then $\Omega^1_{2}(X)$ is 
strictly contained in $\Grass_2(H^1(X,\Q))\setminus 
\sigma_{2}(\RR^1(X,\Q))$. 
\end{theorem}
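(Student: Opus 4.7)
The plan is to produce a single rational $2$-plane $P\in\Grass_2(H^1(X,\Q))$ witnessing the strict inclusion, that is, $P$ meets $\RR^1(X,\Q)$ only at $0$ but $\exp(P\otimes\C)$ meets $\WW^1(X)$ in infinitely many points. By Theorem \ref{thm:df cv}, the latter condition is exactly $P\notin\Omega^1_2(X)$, and by definition of the incidence variety the former condition is exactly $P\notin\sigma_2(\RR^1(X,\Q))$.

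By hypothesis, $\WW^1(X)$ has a $1$-dimensional irreducible component $C$ that avoids $1$. Invoking Theorem \ref{thm:arapura}\eqref{a2}, I may write $C=\rho T$, with $T$ a $1$-dimensional algebraic subtorus of $H^1(X,\C^{\times})^{\circ}$ and $\rho$ a torsion character. Let $L=\tau_1(T)\subset H^1(X,\Q)$ be the rational line tangent to $T$, and pick a rational lift $w\in\Q^n$ of $\rho$, i.e., $\exp(2\pi i w)=\rho$; such a $w$ exists since $\rho$ is torsion. Because $\rho T$ misses $1$, we have $\rho\notin T$, so $w\notin L+\Z^n$. The $2$-plane I will construct is
\[
P := L \,+\, \Q\langle w'\rangle, \qquad w' = w+k \text{ for a well-chosen } k\in\Z^n.
\]
For any such $P$, the torus $\exp(P\otimes\C)$ contains both $T=\exp(L\otimes\C)$ and $\rho=\exp(2\pi i w')$, hence contains the full translated torus $\rho T$. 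Consequently $\exp(P\otimes\C)\cap\WW^1(X)\supset\rho T$ is $1$-dimensional, so $P\notin\Omega^1_2(X)$.

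It remains to choose $k\in\Z^n$ so that $P\cap\RR^1(X,\Q)=\{0\}$. Because $X$ is $1$-formal (Theorem \ref{thm:tcone}) and smooth quasi-projective (Corollary \ref{cor:res qp}), $\RR^1(X,\C)=\bigcup_j L_j$ is a finite union of rational linear subspaces, each of codimension at least $2$ by the codimension hypothesis. A direct check shows that, provided $L\not\subset L_j$, the intersection $P\cap L_j$ is $\{0\}$ precisely when $w'\notin L_j+L$. Since each $L_j+L$ has dimension at most $n-1$, the set $\bigcup_j(L_j+L-w)$ is a proper union of affine subspaces of $\Q^n$. The lattice $\Z^n$ is Zariski dense in $\Q^n$, so it is not contained in any proper algebraic subset; this lets me pick $k\in\Z^n$ outside every $L_j+L-w$, so that the resulting $w'=w+k$ satisfies $w'\notin L_j+L$ for all $j$, as required.

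The main obstacle is the transversality assumption $L\not\subset L_j$ for every resonance component $L_j$. If this failed, then by $1$-formality we would have $T\subset\WW^1(X)$, forcing $T$ to sit inside a non-translated positive-dimensional component of $\WW^1(X)$ through $1$; in that case $P\supset L$ would automatically lie in $\sigma_2(\RR^1(X,\Q))$, and the construction above would have to be replaced by one using a different choice of lift. The no-codim-$1$ hypothesis on $\RR^1(X)$ is exactly what guarantees enough room in $\Grass_2(H^1(X,\Q))$ to accommodate $P$: it forces each $L_j+L$ to be a proper subspace of $H^1(X,\Q)$, so the Zariski-density argument above can find an integer translate $w+k$ avoiding all of them simultaneously.
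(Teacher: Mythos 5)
Your construction of the witness plane $P=L+\Q\langle w+k\rangle$ is the right one, and most of the steps are sound: the reduction via Theorem \ref{thm:df cv}, the observation that $\exp(P\otimes\C)\supseteq\rho T$ forces $P\notin\Omega^1_2(X)$, and the avoidance argument (each $L_j+L$ has dimension at most $n-1$ because $\codim L_j\ge 2$, and $\Z^n$ cannot lie in a finite union of proper affine subspaces). But there is a genuine gap exactly where you flag "the main obstacle": you never rule out the possibility that the line $L=\tau_1(T)$ is contained in some component $L_j$ of $\RR^1(X)$. If that happens, your argument does not merely need "a different choice of lift" --- it fails irreparably along these lines. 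Indeed, any $2$-dimensional subtorus whose intersection with the translated circle $\rho T$ is infinite must contain a full coset of $T$, hence contain $T$ itself, hence correspond to a plane $P\supseteq L$; and every such $P$ meets $L_j$ in at least $L$, so lies in $\sigma_2(\RR^1(X,\Q))$. So either the case $L\subseteq\RR^1(X,\Q)$ is shown to be impossible, or the strategy has to change entirely; your write-up does neither.

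The missing ingredient is Arapura's structure theory (Theorem \ref{thm:arapura} and its refinements), not just $1$-formality. The component $\rho T$ and any positive-dimensional component of $\WW^1(X)$ through the origin, say $T_j=\exp(L_j\otimes\C)$, arise by pullback along non-equivalent pencils, and direction tori of distinct components intersect in only finitely many points; since $L\subseteq L_j$ would give $T\subseteq T_j$, an infinite intersection, this cannot occur. Equivalently, $\tau_1^{\Q}(T)\cap\RR^1(X,\Q)=\{0\}$, which is precisely the transversality your construction needs. (Your heuristic in the closing paragraph --- that the no-codimension-one hypothesis "guarantees enough room" --- only handles the generic-position step for the second generator $w'$; it says nothing about whether $L$ itself already sits inside $\RR^1(X)$, which is a statement about the geometry of quasi-projective varieties, not about codimensions.) With that lemma supplied, your argument closes up and coincides with the proof in the cited source.
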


\subsection{$\Sigma$-invariants}
\label{subsec:proj sigma}
The cohomology jump loci of smooth, quasi-projective varieties 
also inform on the Bieri--Neumann--Strebel sets of such varieties. 
For instance, using Corollary \ref{cor:bns rv 1f}, we may identify 
a class of $1$-formal, quasi-projective varieties for which  
inclusion \eqref{eq:bns res bound} is strict.

\begin{corollary}
\label{cor:tt}
Let $X$ be a $1$-formal, smooth, quasi-projective variety. 
Suppose  $\WW^1(X)$ has a $1$-dimensional component 
not passing through $1$, while 
$\RR^1(X)$ has no codimension-$1$ components.
Then $\Sigma^1(X)$ is 
strictly contained in $S(X) \setminus S(\RR^1(X,\R))$. 
\end{corollary}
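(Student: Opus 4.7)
The plan is to derive this corollary by combining three results already on the table: the upper bound for the BNS invariant of a $1$-formal space, the strictness result for the Dwyer--Fried invariant under the stated translated-component hypothesis, and the contrapositive of the implication linking the $\Sigma$- and $\Omega$-pictures in the $1$-formal setting.

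First I would recall that since $X$ is $1$-formal, Corollary \ref{cor:bns 1f} gives the inclusion
\[
\Sigma^1(X) \subseteq S(X) \setminus S(\RR^1(X,\R)),
\]
so the only thing to verify is that this inclusion is proper. Next, I would invoke Theorem \ref{thm:tt}: the hypotheses on $\WW^1(X)$ (a $1$-dimensional component off the identity) and on $\RR^1(X)$ (no codimension-$1$ components) are precisely those of that theorem, and thus yield
\[
\Omega^1_2(X) \subsetneqq \Grass_2(H^1(X,\Q)) \setminus \sigma_2(\RR^1(X,\Q)).
\]

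The key step is then to run Corollary \ref{cor:bns rv 1f} in reverse. That corollary asserts that, for a $1$-formal space $X$, the equality $\Sigma^1(X,\Z) = S(X)\setminus S(\RR^1(X,\R))$ forces $\Omega^1_r(X) = \Grass_r(H^1(X,\Q)) \setminus \sigma_r(\RR^1(X,\Q))$ for every $r\ge 1$. Taking the contrapositive at $r=2$, the strictness of the $\Omega$-inclusion supplied by Theorem \ref{thm:tt} forbids the equality $\Sigma^1(X,\Z) = S(X)\setminus S(\RR^1(X,\R))$. Combined with the inclusion from Corollary \ref{cor:bns 1f}, this is precisely the strict containment asserted.

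There is no real obstacle: the work has been done in Theorem \ref{thm:tt} (establishing strictness on the Grassmannian side) and in Theorem \ref{thm:bns omega} / Corollary \ref{cor:bns rv 1f} (the transfer mechanism from the BNSR side to the Dwyer--Fried side in the $1$-formal setting). The only subtlety worth flagging is that Corollary \ref{cor:bns rv 1f} is an implication, not an equivalence, so one must apply it contrapositively; the hypotheses of Theorem \ref{thm:tt} supply exactly the failure of the $\Omega$-equality needed to activate that contrapositive.
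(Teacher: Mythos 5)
Your proposal is correct and follows exactly the paper's intended derivation: the inclusion comes from Corollary \ref{cor:bns 1f} (via $1$-formality), the strictness on the Grassmannian side from Theorem \ref{thm:tt}, and the transfer back to the $\Sigma$-side from the contrapositive of Corollary \ref{cor:bns rv 1f} at $r=2$. Nothing is missing.
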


We shall see in Example \ref{ex:deleted B3} a concrete variety 
to which this corollary applies. 

In the case of smooth, complex projective varieties (or, more 
generally, compact K\"{a}hler manifolds), a different approach 
is needed in order to show that inclusion \eqref{eq:bns res bound} 
may be strict.  Indeed, by Theorem \ref{thm:arapura}, all 
components of $\WW^1(\overline{X})$ are even-dimensional, 
so Corollary \ref{cor:tt} does not apply. 

On the other hand, as shown by Delzant in \cite{De10}, the 
BNS invariant of a compact K\"{a}hler manifold $M$ is 
determined by the pencils supported by $M$.    

\begin{theorem}[\cite{De10}] 
\label{thm:delzant}
Let $M$ be a compact K\"{a}hler manifold.   
Then 
\begin{equation}
\label{eq:sigma1m}
\Sigma^1(M)= S(M) \setminus\bigcup\nolimits_{\alpha} S( f_{\alpha}^* 
( H^1(C_{\alpha}, \R)) ),
\end{equation}
where the union is taken 
over those pencils $f_{\alpha}\colon M\to C_{\alpha}$
with the property that either $\chi(C_{\alpha})<0$, or 
$\chi(C_{\alpha})=0$ and $f_{\alpha}$ has some multiple fiber. 
\end{theorem}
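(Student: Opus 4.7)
The plan is to prove the two inclusions $\subseteq$ and $\supseteq$ separately. The first is essentially a consequence of the general upper bound of Corollary~\ref{cor:bns tau} combined with Arapura's structure theorem, while the second is the genuinely deep direction and requires Kähler-geometric input that goes beyond the cohomological machinery developed in the excerpt.

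For the inclusion $\Sigma^1(M) \subseteq S(M) \setminus \bigcup_\alpha S(f_\alpha^*(H^1(C_\alpha,\R)))$, I would start from Corollary~\ref{cor:bns tau}, which gives $\Sigma^1(M) \subseteq S(M) \setminus S(\tau_1^\R(\WW^1(M)))$. The task then reduces to identifying $\tau_1^\R(\WW^1(M))$ with $\bigcup_\alpha f_\alpha^*(H^1(C_\alpha,\R))$ for the pencils appearing in the statement. For this, I would invoke Theorem~\ref{thm:arapura}\eqref{a2}, in its Kähler extension, which asserts that every positive-dimensional irreducible component of $\WW^1(M)$ has the form $\rho \cdot T$, with $\rho$ a torsion character and $T = f^*(H^1(C,\C^\times))$ for a holomorphic pencil $f \colon M \to C$ onto a smooth curve. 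Since $\tau_1$ of a torsion translate $\rho \cdot T$ is the tangent space to $T$, the real exponential tangent cone is the union of the corresponding real subspaces $f^*(H^1(C,\R))$. One then verifies that the pencils actually producing such components are exactly those with $\chi(C) < 0$ (pencils onto hyperbolic curves, which yield subtori through the identity) together with those of elliptic type ($\chi(C) = 0$) admitting a multiple fiber (which yield nontrivial torsion translates of $2$-dimensional subtori).

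For the reverse inclusion, suppose that $\chi \in S(M)$ lies in none of the subspheres $S(f_\alpha^*(H^1(C_\alpha,\R)))$, yet $\chi \notin \Sigma^1(M)$. By Theorem~\ref{thm:bns novikov}, some Novikov homology group $H_i(M, \widehat{\Z{G}}_{-\chi})$ with $i\le 1$ is nonzero. Following Delzant, this non-vanishing, together with a Sikorav-type argument, produces a minimal, nontrivial isometric action of $G = \pi_1(M)$ on an $\R$-tree whose translation length function is proportional to $\chi$. The Gromov--Schoen theorem on equivariant harmonic maps then furnishes a $G$-equivariant pluriharmonic map $u \colon \widetilde{M} \to \cT$, whose Kähler rigidity forces it to factor through a holomorphic map $f \colon M \to C$ to a Riemann surface. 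If $C$ is hyperbolic, one obtains $\chi \in f^*(H^1(C,\R))$ with $\chi(C) < 0$; if $C$ is elliptic, the minimality of the tree action, together with the assumption that $\chi$ avoids all of the permitted pencil subspheres, forces $f$ to have a multiple fiber. Either alternative contradicts the choice of $\chi$.

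The hard part is clearly this second inclusion, and within it the passage from the $\R$-tree action to a holomorphic pencil. That step sits entirely outside the cohomological framework of the paper: it relies on the analytic Gromov--Schoen rigidity theorem for equivariant harmonic maps into non-positively curved targets, together with its Kähler refinement asserting that such maps factor through curves. The elliptic case is especially delicate, since one must distinguish pencils onto elliptic curves with multiple fibers (which contribute to the complement of $\Sigma^1$, via genuine torsion-translated components of $\WW^1(M)$) from pencils onto elliptic curves without multiple fibers (whose pullback subtori do not meet $\WW^1(M)$ and whose associated characters accordingly lie in $\Sigma^1(M)$).
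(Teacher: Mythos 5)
First, a point of order: the paper does not prove this theorem --- it is quoted from Delzant \cite{De10} as an external input, so there is no internal proof to compare against. Judged on its own merits, your sketch of the reverse inclusion (Novikov non-vanishing $\Rightarrow$ action on an $\R$-tree $\Rightarrow$ Gromov--Schoen equivariant pluriharmonic map $\Rightarrow$ factorization through a pencil) is the right outline of Delzant's actual argument, modulo the substantial analytic details you acknowledge omitting.

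Your forward inclusion, however, contains a genuine error. You claim that for a torsion-translated component $\rho\cdot T$ of $\WW^1(M)$, the exponential tangent cone $\tau_1(\rho\cdot T)$ is the tangent space to $T$. This is false when $\rho\notin T$: by Definition \ref{def:exp tcone}, $z\in\tau_1(W)$ requires $\exp(\lambda z)\in W$ for \emph{all} $\lambda$, and $\lambda=0$ already forces $1\in W$; hence $\tau_1(\rho\cdot T)=\emptyset$ for any component not passing through the identity. Consequently $\tau_1^{\R}(\WW^1(M))$ only records the subtori through $1$, i.e.\ (essentially) the pencils with $\chi(C_\alpha)<0$, and Corollary \ref{cor:bns tau} does \emph{not} exclude the subspheres $S(f_\alpha^*(H^1(C_\alpha,\R)))$ attached to elliptic pencils with multiple fibers --- for an elliptic base, $H^1(E,\C_\rho)=0$ for $\rho\ne 1$, so the associated components of $\WW^1(M)$ are all genuinely translated and invisible to $\tau_1$. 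This is not a repairable detail within the rank-type framework: Theorem \ref{thm:nov betti} shows that the Novikov \emph{Betti numbers} vanish in those directions, so the obstruction to membership in $\Sigma^1(M,\Z)$ coming from a multiple-fiber elliptic pencil is torsion in Novikov homology, which none of the tangent-cone/resonance bounds in the paper detect. Indeed, the whole point of Theorem \ref{thm:kahler} and Example \ref{ex:omega kahler} is that $\Sigma^1(M)$ can be strictly smaller than the bound of Corollary \ref{cor:bns tau} precisely because of such pencils. The easy direction of Delzant's theorem therefore also needs a geometric argument (passing to the orbifold fundamental group of the base and showing the relevant kernel fails to be finitely generated), not just the cohomological upper bound.
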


This theorem, together with results from \cite{DPS-duke}, 
yields the following characterization of those compact 
K\"{a}hler manifolds $M$ for which the inclusion from 
Corollary \ref{cor:bns 1f} holds as equality.  

\begin{theorem}[\cite{PS-plms}]
\label{thm:kahler}
Let  $M$ be a compact K\"{a}hler manifold. 
Then $\Sigma^1(M)=S(\RR^1(M, \R))^{\compl}$ 
if and only if there is no pencil $f\colon M\to E$ onto an elliptic 
curve $E$ such that $f$ has multiple fibers.  
\end{theorem}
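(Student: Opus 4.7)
The plan is to combine Delzant's explicit description of $\Sigma^1(M)$ from Theorem \ref{thm:delzant} with the detailed structure of $\RR^1(M,\R)$ for compact K\"ahler manifolds established in \cite{DPS-duke}. Since $M$ is K\"ahler, it is $1$-formal, so Corollary \ref{cor:bns 1f} already provides the inclusion $\Sigma^1(M) \subseteq S(\RR^1(M,\R))^{\compl}$, and the task reduces to deciding precisely when this is an equality. On one hand, Delzant's theorem writes
\[
\Sigma^1(M)^{\compl} \;=\; \bigcup_\alpha S\bigl(f_\alpha^*(H^1(C_\alpha,\R))\bigr)
\]
over pencils $f_\alpha\colon M\to C_\alpha$ with either $\chi(C_\alpha)<0$, or $C_\alpha$ elliptic with some multiple fiber. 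On the other hand, Arapura's structure theorem (Theorem \ref{thm:arapura}) together with the tangent cone identification $\tau_1(\VV^1(M))=\RR^1(M)$ of Theorem \ref{thm:tcone} yields, by the analysis in \cite{DPS-duke}, that the linear components of $\RR^1(M,\R)$ are exactly the subspaces $g^*(H^1(D,\R))$ for pencils $g\colon M\to D$ onto smooth curves of genus at least $2$. The decisive structural fact is that an elliptic pencil $f\colon M\to E$ with multiple fibers is ``resonant from the characteristic-variety side'' but contributes to $\VV^1(M)$ only through translated components $\rho\cdot f^*(H^1(E,\C^{\times}))$, with $\rho$ a non-trivial torsion character; these components do not pass through $1$, and therefore drop out of the exponential tangent cone $\tau_1^{\R}(\VV^1(M))=\RR^1(M,\R)$.

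For the ``if'' direction, suppose no elliptic pencil with multiple fibers exists. Delzant's union then ranges only over pencils with $\chi(C_\alpha)<0$, and the subspaces $f_\alpha^*(H^1(C_\alpha,\R))$ appearing there are precisely the components of $\RR^1(M,\R)$ by the preceding paragraph. Hence $\Sigma^1(M)^{\compl}=S(\RR^1(M,\R))$, as desired. For the ``only if'' direction, argue by contrapositive: if $f\colon M\to E$ is a pencil onto an elliptic curve with a multiple fiber, Delzant places $S(f^*(H^1(E,\R)))$ inside $\Sigma^1(M)^{\compl}$; but, by the structural statement just recalled, the rationally defined plane $f^*(H^1(E,\R))$ is \emph{not} contained in $\RR^1(M,\R)$. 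Thus $\Sigma^1(M)^{\compl}\supsetneq S(\RR^1(M,\R))$, breaking the putative equality.

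The most delicate step, and the main obstacle, is the structural claim that an elliptic pencil with multiple fibers produces only genuinely translated components of $\VV^1(M)$, so contributes nothing to $\RR^1(M,\R)$. Establishing this rigorously is the core content of \cite{DPS-duke}: one passes to the orbifold pencil $\tilde f\colon M\to \tilde E$, notes that $\chi^{\mathrm{orb}}(\tilde E)<0$ makes the pencil admissible in the sense of Arapura, and then isolates \emph{which} characters of $\pi_1^{\mathrm{orb}}(\tilde E)$ actually pull back into $\VV^1(M)$. A Leray computation for $f$ shows that a generic character on the identity component $f^*(H^1(E,\C^{\times}))$ has vanishing twisted $H^1$ on $M$, whereas a suitable torsion translate (picked up from the orbifold torsion in $H_1(\tilde E)$) does not. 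Granting this technical point, the two directions above close the argument.
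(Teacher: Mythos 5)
Your argument is correct and is exactly the route the paper indicates for this cited result: combine Delzant's description of $\Sigma^1(M)^{\compl}$ as a union of spheres $S(f_\alpha^*(H^1(C_\alpha,\R)))$ with the [DPS-duke]/Arapura structure theory identifying $\RR^1(M,\R)=\tau_1^{\R}(\VV^1(M))$ with the union of the pullbacks along pencils onto curves of genus at least $2$, the elliptic pencils with multiple fibers contributing only torsion-translated components that miss the origin. You correctly isolate the one delicate point (that such an elliptic pencil's pullback plane is not swallowed by $\RR^1(M,\R)$, e.g.\ because it cannot factor through a genus-$\ge 2$ pencil with connected fibers), which is precisely the content supplied by [DPS-duke].
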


\subsection{Examples and discussion}
\label{subsec:exdis}
As noted in \cite[Remark 16.6]{PS-plms}, a general construction 
due to Beauville  \cite{Be} shows that equality does not always 
hold in Theorem \ref{thm:kahler}.  More precisely, if $N$ is a 
compact K\"{a}hler manifold on which a finite group $\pi$ acts 
freely, and  $p\colon C \to E$ is a ramified, regular $\pi$-cover 
over an elliptic curve, with at least one ramification point, 
then the quotient $M=(C \times N)/\pi$ is 
a compact K\"{a}hler manifold admitting a pencil 
$f\colon M\to E$ with multiple fibers.  

An example of this construction is given in \cite{DP}.    
Let $C$ be a Fermat quartic in $\CP^2$, viewed as a $2$-fold branched 
cover of $E$, and let $N$ be a simply-connected compact K\"{a}hler 
manifold admitting a fixed-point free involution, for instance, a 
Fermat quartic in $\CP^3$, viewed as a $2$-fold unramified 
cover of the Enriques surface.  
Then, the  K\"{a}hler manifold $M=(C\times N)/\Z_2$  
admits a pencil with base $E$, having four multiple 
fibers, each of multiplicity $2$; thus, $\Sigma^1(M,\Z)=\emptyset$. 
Moreover, direct computation shows that $\RR^1(M)=\{0\}$, and so  
$\Sigma^1(M,\Z) \subsetneqq S(\RR^1(M, \R))^{\compl}$. 

We provide here another example, in the lowest possible dimension, 
using a complex surface studied by Catanese, Ciliberto, and 
Mendes Lopes in \cite{CCM}.  
For this manifold $M$, the resonance variety does not vanish, 
yet $\Sigma^1(M,\Z)$ is strictly contained in the complement of 
$S(\RR^1(M, \R))$.  We give an alternate explanation of this fact 
which is independent of Theorems \ref{thm:delzant} and \ref{thm:kahler}, 
but relies instead on results from \cite{Su11} and on Corollary \ref{cor:bns rv 1f}.

\begin{example}
\label{ex:omega kahler}
Let $C_1$ be a (smooth, complex) curve of genus $2$ 
with an elliptic involution $\sigma_1$ and let $C_2$ be 
a curve of genus $3$ with a free involution $\sigma_2$. 
Then $\Sigma_1=C_1/\sigma_1$ is a curve of genus $1$, 
and $\Sigma_2=C_2/\sigma_2$ is a curve of genus $2$. 

Now let $M=(C_1 \times C_2)/\Z_2$, 
where $\Z_2$ acts via the involution $\sigma_1 \times \sigma_2$.  
Then $M$ is a smooth, complex projective surface.  
Projection onto the first coordinate yields a pencil 
$f_1 \colon M\to \Sigma_1$ with two multiple fibers, 
each of multiplicity $2$, while 
projection onto the second coordinate 
defines a smooth fibration $f_2\colon M\to \Sigma_2$. 
By Theorem \ref{thm:kahler}, we have that 
$\Sigma^1(M,\Z) \subsetneqq S(\RR^1(M, \R))^{\compl}$.

Here is an alternate explanation. Using the fact that $H_1(M,\Z)=\Z^6$, 
we may identify $H^1(M,\C^{\times}) = (\C^{\times})^6$.   In  \cite{Su11}, 
we showed that   
\begin{equation}
\label{eq:v1m}
\VV^1(M)=
\{t_4=t_5=t_6=1, \, t_3=-1\}\cup 
\{ t_1=t_2=1\}, 
\end{equation}
with the two components corresponding to the pencils 
$f_1$ and $f_2$, respectively, from which we inferred 
that the set $\Omega^1_2(M)$ is not open, 
not even in the usual topology on $\Grass_2(\Q^6)$. 
Now, from \eqref{eq:v1m}, we also see that $\RR^1(M)=\{ x_1=x_2=0\}$. 
Since $\Omega^1_2(M)$ is not open, it must be a proper subset 
of $\sigma_2( \RR^1(M,\Q) )^{\compl}$.  
In view of Corollary \ref{cor:bns rv 1f}, we conclude once again that 
$\Sigma^1(M,\Z) \subsetneqq S(\RR^1(M, \R))^{\compl}$. 
\end{example}

\section{Configuration spaces}
\label{sect:config}

We now consider in more detail a particularly interesting class 
of quasi-projective varieties, obtained by deleting 
the ``fat diagonal" from the $n$-fold Cartesian product of 
a smooth, complex algebraic curve. 

\subsection{Ordered configurations on algebraic varieties}
\label{subsec:fxn}

A construction due to Fadell and Neuwirth associates 
to a space $X$ and a positive integer $n$ the space of 
ordered configurations of $n$ points in $X$, 
\begin{equation}
\label{eq:conf}
F(X,n) = \{ (x_1, \dots , x_n) \in X^{n} 
\mid x_i \ne x_j \text{ for } i\ne j\}.
\end{equation}

The most basic example is the configuration space 
of $n$ ordered points in $\C$, which is a classifying 
space for $P_n$, the pure braid group on $n$ strings, 
whose cohomology ring was computed by Arnol'd in the 
late 1960s.  

The $E_2$-term of the Leray spectral sequence for 
the inclusion $F(X,n)\inj X^n$ was described concretely 
by Cohen and Taylor in the late 1970s. 
If $X$ is a smooth, complex projective variety 
of dimension $m$, then, as shown by Totaro in \cite{To96}, 
the Cohen--Taylor spectral sequence collapses at the 
$E_{m+1}$-term, and $H^*(F(X,n),\C)=E_{m+1}$, 
as graded algebras.  

Particularly interesting is the case of a Riemann surface 
$\Sigma_g$.  The ordered configuration space $F(\Sigma_g,n)$ 
is a classifying space for $P_{n}(\Sigma_g)$, the pure braid group 
on $n$ strings of the underlying surface.   
In \cite{Be94}, Bezrukavnikov gave explicit presentations for the 
Malcev Lie algebras $\m_{g,n}=\m(P_{n}(\Sigma_g))$, from which 
he concluded that the pure braid groups on surfaces are $1$-formal 
for $g>1$ or $g=1$ and $n\le 2$, but not $1$-formal for $g=1$ 
and $n\ge 3$.   The non-$1$-formality of the groups 
$P_{n}(\Sigma_1)$, $n\ge 3$, is also established  
in \cite{DPS-duke}, by showing that the tangent cone 
formula  \eqref{eq:tcone} fails in this situation (see 
Example \ref{ex:conf spaces} below for the case $n=3$).

\begin{remark}
\label{rem:cee}
In \cite[Proposition 5]{CEE}, Calaque, Enriquez, and Etingof prove 
that $P_{n}(\Sigma_1)$ is formal, for all $n\ge 1$.  But the notion 
of formality that these authors use is weaker than the usual notion 
of $1$-formality:  their result is that $\m_{1,n}$ is isomorphic 
as a filtered Lie algebra with the completion (with respect to the 
bracket length filtration) of the associated graded Lie algebra, 
$\gr(\m_{1,n})$.  The failure of $1$-formality comes from the 
fact that $\gr(\m_{1,n})$ is {\em not}\/ a quadratic Lie algebra, 
for $n\ge 3$.
\end{remark}

\subsection{Ordered configurations on the torus}
\label{subsec:ft}
For the reasons outlined above, it makes sense 
to look more carefully at the configuration  
spaces of an elliptic curve $\Sigma_1$.  
The resonance varieties $\RR^1(F(\Sigma_1,n))$ were computed 
in \cite{DPS-duke}, while the positive-dimensional components 
of  $\VV^1(F(\Sigma_1,n))$ were determined  in \cite{Di10}. 

Since $\Sigma_1=S^1\times S^1$ is a topological group, the  
space $F(\Sigma_1,n)$ splits up to homeomorphism as a direct product, 
$F(\Sigma'_1, n-1)\times \Sigma_1$, where $\Sigma'_1$ 
denotes $\Sigma_1$ with the identity removed.  
Thus, for all practical purposes, it 
is enough to consider the space $F(\Sigma'_1, n-1)$. 
For the sake of concreteness, we will work out in 
detail the case $n=3$; the general case may be treated similarly.  

\begin{example}
\label{ex:conf spaces}
Let $X=F(\Sigma'_1,2)$ be the configuration space of $2$ 
labeled points on a punctured torus.  
The cohomology ring of $X$ is the exterior algebra on 
generators $a_1,a_2, b_1,b_2$ in degree $1$, modulo 
the ideal spanned by the forms 
$a_1 b_2+a_2 b_1$, $a_1 b_1$, and $a_2 b_2$. 
The first resonance variety is an 
irreducible quadric hypersurface in $\C^4$, given by 
\[
\RR^1(X)=\{x_1y_2-x_2y_1=0\}.
\]
Corollary \ref{cor:res qp}, then, shows that $X$ is not $1$-formal. 

The first characteristic variety of $X$ consists of three 
$2$-dimensional algebraic subtori of $(\C^{\times})^4$:
\[
\VV^1(X)=\{ t_1=t_2=1\} \cup \{s_1=s_2=1\} \cup 
\{ t_1s_1=t_2s_2=1\}.  
\]
These three subtori arise from the fibrations 
$F(\Sigma'_1,2) \to \Sigma'_1$ 
obtained by sending a point $(z_1,z_2)$ to $z_1$, $z_2$, 
and $z_1z_2^{-1}$, respectively.  It follows that 
$\tau_1(\VV^1(X))=\TC_1(\VV^1(X))$, but both  
types of tangent cones are properly contained in 
the resonance variety $\RR^1(X)$.  Moreover, 
the characteristic subspace arrangement $\CC_1(X)$ 
consists of three, pairwise transverse planes in $\Q^4$, 
namely, 
\[
L_1=\{ x_1=x_2=0\}, \ L_2 =\{y_1=y_2=0\}, 
\ L_3=\{ x_1+y_1=x_2+y_2=0\}.
\]

By Proposition \ref{prop:tau schubert}, the Dwyer--Fried sets  
$\Omega^1_r(X)$ are obtained by removing from $\Grass_r(\Q^4)$  
the Schubert varieties $\sigma_r(L_1)$, $\sigma_r(L_2)$, and 
$\sigma_r(L_3)$. We treat each rank $r$ separately.

\begin{itemize}
\item
 When $r=1$, the set  
$\Omega^1_1(X)$ is the complement in $\QP^3$ 
of the three projective lines defined by $L_1$, $L_2$, and $L_3$.  
\\[-8pt]
\item
When $r=2$, the Grassmannian $\Grass_2(\Q^4)$ 
is the quadric hypersurface in $\QP^5$ given in Pl\"{u}cker 
coordinates by the equation
$p_{12}p_{34}-p_{13}p_{24}+p_{23}p_{14}=0$.
The set $\Omega^1_2(X)$, then, is the complement 
in $\Grass_2(\Q^4)$ of the variety cut out by the 
hyperplanes $p_{12}=0$, $p_{34}=0$, and  
$p_{12}-p_{23}+p_{14}+p_{34}=0$.
\\[-8pt]
\item
When $r\ge 3$, the set $\Omega^1_r(X)$ is empty.
\end{itemize}

Finally, by Corollary \ref{cor:bns tau}, the BNS set 
$\Sigma^1(X,\Z)$ is included in the complement in $S^3$ of 
the three great circles cut out by the real planes spanned 
by $L_1$, $L_2$, and $L_3$, respectively.  It would be 
interesting to know whether this inclusion is actually an equality. 
\end{example}

\section{Hyperplane arrangements}
\label{sect:arrs}

We conclude with another interesting class 
of quasi-projective varieties, obtained by deleting 
finitely many hyperplanes from a complex affine 
space.

\subsection{Complement and intersection lattice}
\label{subsec:arr stuff}

A (central) hyperplane arrangement $\A$ is a finite collection 
of codimension $1$ linear subspaces in a complex affine 
space $\C^{\ell}$.   A defining polynomial for 
$\A$ is the product $Q(\A)=\prod_{H\in \A} \alpha_H$, where 
$\alpha_H\colon \C^{\ell} \to \C$ is a linear form whose kernel 
is $H$.

The main topological object associated 
to an arrangement is its complement, 
$X(\A)=\C^{\ell}\setminus\bigcup_{H\in \A}H$. 
This is a connected, smooth, quasi-projective variety, 
whose homotopy-type invariants are intimately tied  
to the combinatorics of the arrangement.  The latter 
is encoded in the intersection lattice, $L(\A)$, which 
is the poset of all (non-empty) intersections of 
$\A$, ordered by reverse inclusion. The rank 
of the arrangement, denoted $\rk( \A )$, is the 
codimension of $\bigcap_{H\in \A} H$. 

\begin{example}
\label{ex:br}
A familiar example is the rank $\ell-1$ braid arrangement, 
consisting of the diagonal hyperplanes $H_{ij}=\{z_i-z_j=0\}$ 
in $\C^{\ell}$. The complement is the configuration 
space $F(\C,\ell)$, while the intersection lattice is the lattice 
of partitions of $\set{1,\dots,\ell}$, ordered by refinement.  
\end{example}

For a general arrangement $\A$, the cohomology ring of the 
complement was computed by Brieskorn in the early 1970s, 
building on work of Arnol'd on the cohomology ring of the braid 
arrangement.  It follows from Brieskorn's work that the space 
$X(\A)$ is formal.  In 1980, Orlik and Solomon gave a simple 
combinatorial description of the ring $H^*(X(\A),\Z)$:  it is the 
quotient of the exterior algebra on degree-one classes $e_H$ 
dual to the meridians around the hyperplanes $H\in \A$, 
modulo a certain ideal (generated in degrees greater than one) 
determined by the intersection lattice. 

Let $\overline{\A}=\{ \bP(H)\}_{H\in \A}$ be the 
projectivization of $\A$, and let 
$X(\overline{\A})$ be its complement in $\CP^{\ell-1}$. 
The standard $\C^{\times}$-action on $\C^{\ell}$ restricts 
to a free action on $X(\A)$; the resulting 
fiber bundle, $\C^{\times} \to X(\A) \to X(\overline{\A})$,  
is readily seen to be trivial. Under the resulting identification,  
$X(\A)= X(\overline{\A}) \times \C^{\times}$, the 
group $H^1(\C^{\times},\Z)=\Z$ is spanned by the vector  
$\sum_{H\in \A} e_H\in H^1(X(\A),\Z)$.

\subsection{Cohomology jump loci}
\label{subsec:cjl arr}

The resonance varieties $\RR^i(\A):=\RR^i(X(\A),\C)$ 
were first defined and studied by Falk in \cite{Fa97}.  
Clearly, these varieties depend only on the graded 
ring $H^*(X(\A),\C)$, and thus, only on the intersection 
lattice $L(\A)$.  

Now fix a linear ordering on the hyperplanes of $\A$, and 
identify $H^1(X(\A),\C)=\C^n$, where $n=\abs{\A}$.  
From the product formula \eqref{eq:rprod} for resonance 
varieties (or from an old result of Yuzvinsky \cite{Yu95}), 
we see that $\RR^i(\A)$ is isomorphic to $\RR^i(\overline{\A})$, 
and lies in the hyperplane $x_1+\cdots +x_n=0$ inside 
$\C^n$.  
Similarly, the characteristic varieties $\VV^i(\A):=\VV^i(X(\A),\C)$ 
lie in the subtorus $t_1\cdots t_n=1$ inside the complex 
algebraic torus $H^1(X(\A),\C^{\times})=(\C^{\times})^n$. 

In view of the Lefschetz-type theorem of Hamm and L\^{e}, 
taking a generic two-dimen\-sional section does not 
change the fundamental group of the complement. 
Thus, in order to describe the variety $\RR^1(\A)$, 
we may assume $\A$ is a affine arrangement of $n$ lines 
in $\C^2$, for which no two lines are parallel.  

The structure of the first resonance variety of an arrangement 
was worked out in great detail in work of  Cohen, Denham, Falk, 
Libgober, Pereira, Suciu, Yuzvinsky, and many others.  It is known 
that each component of $\RR^1(\A)$ is a linear subspace in $\C^n$, 
while any  two distinct components meet only at $0$. The simplest 
components of  the resonance variety are those corresponding to multiple 
points  of $\A$:  if $m$ lines meet at a point, then $\RR^1(\A)$ 
acquires an $(m-1)$-dimensional linear subspace.  The remaining 
components (of dimension either $2$ or $3$), correspond to 
certain ``neighborly partitions" of sub-arrange\-ments of $\A$. 

\begin{example}
\label{ex:braid arr}
Let $\A$ be a generic $3$-slice of the braid 
arrangement of rank $3$, with defining polynomial 
$Q(\A)=z_0z_1z_2(z_0-z_1)(z_0-z_2)(z_1-z_2)$. 
Take a generic plane section, and label the 
corresponding lines as $1,\dots ,6$. 
Then, the variety $\RR^{1}(\A)\subset \C^6$ has $4$ 
local components, corresponding to the triple 
points $124, 135, 236, 456$, and one non-local 
component, corresponding to the neighborly 
partition $(16| 25 | 34)$.  
\end{example}

From Theorem \ref{thm:arapura}, we know that 
$\VV^1(\A)$ consists of subtori in 
$(\C^{\times})^n$, possibly translated by roots of unity, 
together with a finite number of torsion points.   
By Theorem \ref{thm:tcone}---first proved in the context 
of hyperplane arrangements by Cohen--Suciu and Libgober---%
we have that $\TC_1(\VV^1(\A))=\RR^1(\A)$.  Thus, the components 
of $\VV^1(\A)$ passing through the origin are completely determined 
by $\RR^1(\A)$, and hence, by $L(\A)$:  
to each linear subspace $L$ in $\RR^1(\A)$ 
there corresponds an algebraic subtorus, $T=\exp(L)$,  
in $\VV^1(\A)$.  

As pointed out in \cite{Su02}, though, the characteristic variety 
$\VV^1(\A)$ may contain translated subtori---that is, components 
not passing through $1$.  Despite much work since then, it is still 
not known whether such components are combinatorially determined.

\subsection[Upper bounds for the Omega- and Sigma-sets]
{Upper bounds for the $\Omega$- and $\Sigma$-sets}
\label{subsec:omsig arr}
We are now ready to consider the Dwyer--Fried and 
the Bieri--Neumann--Strebel--Renz invariants associated 
to a hyperplane arrangement $\A$.  For simplicity of notation, 
we will write
\begin{equation}
\label{eq:omega arr}
\Omega^i_r(\A):=\Omega^i_r(X(\A)), 
\end{equation}
and view this set as lying in the Grassmannian 
$\Grass_r(\A):=\Grass_r(H^1(X(\A),\Q))$ of $r$-planes 
in a rational vector space of dimension $n=\abs{\A}$, 
with a fixed basis given by the meridians around the 
hyperplanes.  Similarly, we will write
\begin{equation}
\label{eq:sigma arr}
\Sigma^i(\A):=\Sigma^i(X(\A),\Z), 
\end{equation}
and view this set as an open subset inside the $(n-1)$-dimensional 
sphere $S(\A)=S(H^1(X(\A),\R))$.

As noted in \cite{PS-plms, Su-aspm}, it follows from work 
of Arapura \cite{Ar} and Esnault, Schechtman and 
Viehweg \cite{ESV} that every arrangement complement 
is locally straight.  In view of Theorem \ref{thm:df straight}\eqref{s1} 
and Corollary \ref{cor:bns straight}, then, we have the 
following corollaries.

\begin{corollary}[\cite{Su-aspm}]
\label{cor:dfarr bound}
For all $i\ge 1$ and $r\ge 1$, 
\begin{equation}
\label{eq:dfarr bound}
\Omega^i_r(\A) \subseteq \Grass_r(\A)\setminus  
\sigma_r(\RR^i(X(\A),\Q)).
\end{equation}
\end{corollary}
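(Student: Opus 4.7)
The plan is to deduce this upper bound as a one-step application of Theorem \ref{thm:df straight}\eqref{s1}, once we know that the complement $X(\A)$ is locally $k$-straight for every $k\ge 1$. In view of Definition \ref{def:straight}, what must be checked is two-fold: that all components of $\WW^i(X(\A))$ passing through the identity are algebraic subtori, and that $\TC_1(\WW^i(X(\A)))=\RR^i(X(\A))$, for all $i\ge 1$.

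First, I would invoke Arapura's structure theorem in the form of Theorem \ref{thm:arapura}\eqref{a1}. Since $X(\A)$ admits a good compactification $\overline{X}$ (e.g.\ a suitable blowup of $\CP^\ell$) with $b_1(\overline{X})=0$, every characteristic variety $\VV^i(X(\A))$ is a finite union of unitary translates of algebraic subtori of the character torus. Any such translate which contains the identity must be an honest algebraic subtorus, so the first straightness condition is satisfied in all degrees. Second, I would appeal to the tangent cone theorem of Esnault, Schechtman, and Viehweg \cite{ESV} (and its subsequent extensions), which establishes $\TC_1(\WW^i(X(\A)))=\RR^i(X(\A))$ for every $i\ge 1$ and every arrangement $\A$. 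Together these two inputs yield that $X(\A)$ is locally $k$-straight for every $k\ge 1$, that is, locally straight in the sense of Definition \ref{def:straight}.

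Having established local straightness, the corollary is now immediate: Theorem \ref{thm:df straight}\eqref{s1} applied with $k=i$ gives exactly
\[
\Omega^i_r(\A)=\Omega^i_r(X(\A)) \subseteq \Grass_r(H^1(X(\A),\Q)) \setminus \sigma_r(\RR^i(X(\A),\Q)),
\]
which, in the notation \eqref{eq:omega arr} and with $\Grass_r(\A)=\Grass_r(H^1(X(\A),\Q))$, is the stated inclusion.

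In this sense there is no real obstacle beyond the invocation of local straightness; the only subtle point is that local $k$-straightness must be available for \emph{all} $k$, not just $k=1$. For $i=1$ the $1$-formality of arrangement complements already suffices (by the results of Section \ref{subsec:straight}), but for $i\ge 2$ one genuinely needs the Arapura--ESV input cited above. Once this is in hand, the inclusion \eqref{eq:dfarr bound} follows with no further calculation.
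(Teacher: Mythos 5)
Your proposal is correct and follows the same route as the paper: the text immediately preceding the corollary notes that, by the work of Arapura \cite{Ar} and Esnault--Schechtman--Viehweg \cite{ESV}, every arrangement complement is locally straight, and then invokes Theorem \ref{thm:df straight}\eqref{s1}. Your unpacking of the two conditions in Definition \ref{def:straight} (components through $1$ are subtori, and the tangent cone equals the resonance variety) matches the intended argument.
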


\begin{corollary}[\cite{PS-plms}]
\label{cor:sigmaarr bound}
For all $i\ge 1$, 
\begin{equation}
\label{eq:sigmaarr bound}
\Sigma^i(\A) \subseteq S(\A) \setminus  
S(\RR^i(X(\A),\R)).
\end{equation}
\end{corollary}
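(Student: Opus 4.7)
The plan is to reduce the statement to a direct application of the general upper bound for $\Sigma$-invariants of locally straight spaces, namely Corollary \ref{cor:bns straight}. Thus the entire proof amounts to verifying that every arrangement complement is locally $k$-straight, for every $k\ge 1$.

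First, I would establish that the components of $\WW^i(X(\A))$ passing through $1$ are algebraic subtori. This is a direct consequence of Theorem \ref{thm:arapura}\eqref{a1}: write $X(\A)$ as the complement of a (projective) normal-crossings divisor in a smooth projective compactification obtained, say, from a suitable blow-up of $\CP^\ell$ with $b_1=0$; then every component of $\VV^i(X(\A))$ is a torsion-translate of a subtorus of $H^1(X(\A),\C^\times)$, and the ones passing through $1$ must themselves be subtori. Intersecting with $\wG^\circ$ gives the same statement for $\WW^i(X(\A))$.

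Second, I need the tangent-cone-equals-resonance identity $\TC_1(\WW^i(X(\A))) = \RR^i(X(\A))$ in every degree $i\ge 1$. In view of Libgober's inclusion (Theorem \ref{thm:lib}), only the reverse inclusion requires argument. This is precisely the content of the Esnault--Schechtman--Viehweg vanishing theorem for local systems on arrangement complements, which furnishes, for each $a\in \RR^i(X(\A))$, a rank-one local system $\exp(\lambda a)$ on the line through $a$ with nontrivial $H^i$, implying that the line lies in $\WW^i(X(\A))$ (up to a finite exceptional locus), so that $a\in \TC_1(\WW^i(X(\A)))$. These two ingredients together are exactly what Definition \ref{def:straight} demands of a locally $k$-straight space; hence $X(\A)$ is locally $k$-straight for every $k$.

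Having established local straightness, Corollary \ref{cor:bns straight} applies and yields, for all $i\ge 1$,
\[
\Sigma^i(X(\A),\Z) \subseteq S(X(\A)) \setminus S(\RR^i(X(\A),\R)),
\]
which is the desired inclusion \eqref{eq:sigmaarr bound} in the notation of \eqref{eq:sigma arr}. The only genuine obstacle in this outline is the tangent cone equality in higher degrees; once that ingredient is cited from the arrangement-theoretic literature (ESV plus the straightness formalism of \cite{Su-aspm}), the corollary is a one-line specialization of Corollary \ref{cor:bns straight}.
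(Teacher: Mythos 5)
Your proposal is correct and follows essentially the same route as the paper: the paper likewise observes that arrangement complements are locally straight (citing Arapura for the subtorus structure of the components through $1$ and Esnault--Schechtman--Viehweg for the tangent-cone equality $\TC_1(\WW^i)=\RR^i$), and then applies Corollary \ref{cor:bns straight} verbatim. Your write-up merely makes explicit the two ingredients of local straightness that the paper cites from \cite{Ar}, \cite{ESV}, and \cite{Su-aspm} without elaboration.
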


Since the resonance varieties of $\A$ depend only 
on its intersection lattice, these upper bounds 
for the $\Omega$- and $\Sigma$-invariants are 
combinatorially determined. Furthermore, 
Corollary \ref{cor:bns rv locstraight} yields the 
following. 

\begin{corollary}
\label{cor:dfarr equal}
Suppose $\Sigma^i(\A) = S(\A) \setminus  
S(\RR^i(X(\A),\R))$.  Then 
$\Omega^i_r(\A)= \Grass_r(\A)\setminus  
\sigma_r(\RR^i(X(\A),\Q))$, for all $r\ge 1$.
\end{corollary}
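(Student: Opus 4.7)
The plan is to obtain this corollary as an immediate specialization of Corollary \ref{cor:bns rv locstraight} to the arrangement setting. The only substantive input required is local straightness of the complement $X(\A)$, which has already been recorded in the discussion preceding Corollary \ref{cor:dfarr bound}: combining Arapura's structure theorem (Theorem \ref{thm:arapura}) with the Esnault--Schechtman--Viehweg identification of the tangent cone at $1$ to $\WW^i(X(\A))$ with $\RR^i(X(\A),\C)$, one concludes that every arrangement complement is a locally straight space, and in particular locally $i$-straight for every $i\ge 1$.

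With local $i$-straightness in hand, the argument is a one-line application. Assuming the hypothesis
\[
\Sigma^i(\A) \;=\; S(\A)\setminus S(\RR^i(X(\A),\R)),
\]
and unwinding the shorthand $\Sigma^i(\A)=\Sigma^i(X(\A),\Z)$ from \eqref{eq:sigma arr} and $S(\A)=S(X(\A))$, we are precisely in the hypothesis of Corollary \ref{cor:bns rv locstraight} with $X = X(\A)$ and $k = i$. The conclusion of that corollary, after substituting $\Omega^i_r(\A)=\Omega^i_r(X(\A))$ as in \eqref{eq:omega arr} and $\Grass_r(\A)=\Grass_r(H^1(X(\A),\Q))$, is exactly the desired equality $\Omega^i_r(\A) = \Grass_r(\A)\setminus \sigma_r(\RR^i(X(\A),\Q))$, valid for all $r\ge 1$.

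There is no real obstacle, since Corollary \ref{cor:bns rv locstraight} itself is a formal consequence of Theorem \ref{thm:bns omega} (replacing the exponential tangent cone $\tau^{\R}_1(\WW^i(X))$ by $\RR^i(X,\R)$ on the sphere side) together with Theorem \ref{thm:rat res} (which, under local $i$-straightness, supplies both $\tau_1(\WW^i(X))=\RR^i(X)$ and the identification $\RR^i(X,\Q)=\bigcup_{L\in \CC_i(X)} L$, so that the two right-hand sides in \eqref{eq:bns omega} can be rewritten with resonance in place of exponential tangent cone). The only thing worth verifying in passing is notational: that $\RR^i(X(\A),\Q)$ and $\RR^i(X(\A),\R)$ are the rational and real points of the $\C$-variety $\RR^i(X(\A),\C)=\RR^i(X(\A))$, which holds because resonance varieties of arrangements are defined over $\Z$ (indeed, the Orlik--Solomon algebra is integral), so the requisite field-extension behavior from Subsection \ref{subsec:rv} applies.
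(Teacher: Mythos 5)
Your proposal is correct and follows exactly the paper's route: the paper likewise deduces this corollary by noting (via Arapura and Esnault--Schechtman--Viehweg) that every arrangement complement is locally straight, and then applying Corollary \ref{cor:bns rv locstraight} to $X(\A)$ with $k=i$. The extra remarks you add about unwinding the notation and the rationality of the resonance varieties are harmless and consistent with the paper's setup.
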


\subsection{Lower bounds for the $\Sigma$-sets}
\label{subsec:kp}
In this context, the following recent result of Kohno and 
Pajitnov \cite{KP} is relevant.    

\begin{theorem}[\cite{KP}]
\label{thm:kp}
Let $\A$ be an arrangement of $n$ hyperplanes, 
and let $\chi=(\chi_1,\dots , \chi_n)$ be a 
vector in $S^{n-1}=S(\A)$, with all components $\chi_j$ strictly 
positive.  Then $H_{i}(X, \widehat{\Z{G}}_{\chi})=0$, 
for all $i< \rk (\A)$. 
\end{theorem}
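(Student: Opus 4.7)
The plan is to apply Morse--Novikov theory to a closed $1$-form on $X(\A)$ constructed from the master function with exponents $\chi_H$. After reducing to the essential case---if $\rk(\A)=\ell<\ell_0=\dim_\C X(\A)$, then $X(\A)=X(\A_{\mathrm{ess}})\times\C^{\ell_0-\ell}$ and the contractible factor contributes nothing to $H^1$ or to Novikov homology in positive degrees---one may assume $\rk(\A)=\ell=\dim_\C X(\A)$. Consider the multivalued master function $F=\sum_{H\in\A}\chi_H\log\alpha_H$ and the real closed $1$-form $\omega=\sum_{H\in\A}\chi_H\,d\arg\alpha_H$ on $X(\A)$, which is the imaginary part of the single-valued holomorphic $1$-form $dF=\sum_H\chi_H\,d\alpha_H/\alpha_H$; its de Rham class is a positive scalar multiple of $\chi\in H^1(X(\A),\R)$.

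A direct local coordinate computation shows that the zeros of $\omega$ coincide with the critical points of the holomorphic function $F$, and at each non-degenerate such critical point the harmonic local potential $G=\operatorname{Im} F$ (with $dG=\omega$ locally) has Hessian of signature $(\ell,\ell)$---a standard consequence of the fact that $G$ is the imaginary part of a holomorphic function with non-degenerate complex Hessian---so its Morse index is exactly $\ell$. If $F$ happens to have degenerate critical points for the given $\chi$, perturb $F$ by adding a generic globally defined holomorphic function $g$ on $X(\A)$; the new $1$-form $\operatorname{Im}(d(F+g))=\omega+d(\operatorname{Im} g)$ still represents the class of $\chi$, and for generic $g$ the critical points of $F+g$ become non-degenerate and hence of Morse index $\ell$. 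The strict positivity $\chi_H>0$ ensures that the conjugate pluriharmonic function $\operatorname{Re} F=\sum_H\chi_H\log|\alpha_H|$ tends to $-\infty$ at every hyperplane of $\A$, preventing critical points from escaping to $\bigcup_H H$, and a coercivity estimate in $\C^\ell$ keeps them within a compact region.

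Finally, by the Morse--Novikov theorem (see Farber \cite{Far}), the Novikov chain complex computing $H_*(X(\A),\widehat{\Z{G}}_{[\omega]})$ is freely generated over $\widehat{\Z{G}}_{[\omega]}$ in each degree $k$ by the zeros of $\omega$ of Morse index $k$. Since every such zero sits in degree $\ell$, this complex is concentrated in the single degree $\ell$, yielding $H_i(X(\A),\widehat{\Z{G}}_\chi)=0$ for all $i<\ell=\rk(\A)$. The principal obstacle is the non-compactness of $X(\A)$, which forces a careful analysis at infinity to rule out the escape of critical points toward the hyperplanes or toward $\infty\in\C^\ell$ and to ensure the resulting Novikov complex is finitely generated in each degree; it is precisely here that the strict positivity of every $\chi_H$ is used in an essential way.
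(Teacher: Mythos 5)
First, a remark on what you are comparing against: the paper does not prove this statement at all --- it is quoted from Kohno--Pajitnov \cite{KP}. Your strategy (reduce to the essential case; take the logarithmic form $\omega=\operatorname{Im}\sum_{H}\chi_H\,d\alpha_H/\alpha_H$; note that its zeros are exactly the critical points of the multivalued holomorphic master function $F$, and that at a nondegenerate critical point the local primitive $\operatorname{Im}F$ has Hessian of signature $(\ell,\ell)$, hence Morse index $\ell=\rk(\A)$; conclude via a Novikov complex concentrated in degree $\ell$) is in fact the strategy of \cite{KP}, and the two local computations you give are correct. Note also that the $(\ell,\ell)$ signature makes the conclusion insensitive to the sign convention relating $[\omega]$ to the completion $\widehat{\Z{G}}_{\chi}$, which is a pleasant robustness of the argument.

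The gap is that you have displaced, rather than closed, the step that carries essentially all of the difficulty. The Morse--Novikov theorem you invoke from Farber's book is a statement about \emph{closed} manifolds; $X(\A)$ is open, and the claim that the Novikov complex of $\omega$ is freely generated by its zeros and computes $H_*(X(\A),\widehat{\Z{G}}_{\chi})$ is precisely what has to be proved here. That requires showing the zeros are finite in number and confined to a compact set, that gradient trajectories of the local primitives of $\omega$ neither run into the hyperplanes nor escape to infinity in $\C^{\ell}$, and that one can still build the handle/cellular model on the noncompact space. Your ``coercivity estimate'' is asserted, not proved, and the observation that $\operatorname{Re}F=\sum_H\chi_H\log\abs{\alpha_H}$ tends to $-\infty$ along the hyperplanes says nothing by itself about where $dF$ can vanish or where descending trajectories accumulate. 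The perturbation step is also delicate as stated: a ``generic globally defined holomorphic function $g$ on $X(\A)$'' may have poles along the hyperplanes or grow at infinity in a way that destroys exactly the tameness at infinity you need, so the perturbation must be drawn from a restricted class (generic linear forms, or a generic small motion of the weights or defining equations), with the behavior at infinity re-verified afterwards. In \cite{KP} this control at infinity --- their regularity at infinity for the map $\prod_H\alpha_H^{m_H}\colon X(\A)\to\C^{\times}$ --- is the main technical content of the paper, so what you have is a correct outline whose central step is missing.
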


Let $\chi$ be a positive vector as above.  Making use of 
Theorem \ref{thm:bns novikov}, we infer that 
$-\chi \in  \Sigma^{i}(\A)$, for all $i<\rk(\A)$.  
On the other hand, since $\sum_{j=1}^{n} \chi_j\ne 0$, 
we also have that $-\chi \notin S(\RR^{i}(X(\A),\R))$, 
a fact predicted by Corollary \ref{cor:sigmaarr bound}.

Denote by $S^{-}(\A)=S^{n-1}\cap (\R_{<0})^n$ 
the negative octant in the unit sphere $S(\A)$.
In view of the above discussion, Theorem \ref{thm:bns novikov} 
provides the following lower bound for the BNSR invariants 
of arrangements.  

\begin{corollary}
\label{cor:kp bound}
Let $\A$ be a (central) hyperplane arrangement.  Then 
$S^{-}(\A)\subset \Sigma^i(\A)$, for all $i<\rk(\A)$. 
In particular, $S^{-}(\A)\subset \Sigma^1(\A)$.  
\end{corollary}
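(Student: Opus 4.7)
My plan is to derive Corollary~\ref{cor:kp bound} as a short combination of Theorem~\ref{thm:kp} (Kohno--Pajitnov) and the Novikov homology description of the BNSR invariants given in Theorem~\ref{thm:bns novikov} (Farber--Geoghegan--Sch\"utz). All the analytic content has been packaged into those two results; what remains is to match sign conventions and ranges of indices.

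I would fix $\chi\in S^{-}(\A)$. By definition, every coordinate of $\chi$, in the basis of $H^{1}(X(\A),\R)$ dual to the meridians, is strictly negative; hence every coordinate of $-\chi$ is strictly positive, so $-\chi$ lies in the positive octant of $S^{n-1}=S(\A)$. Applying Theorem~\ref{thm:kp} to $-\chi$ yields
\[
H_{j}\bigl(X(\A),\widehat{\Z G}_{-\chi}\bigr) = 0 \quad\text{for all } j<\rk(\A),
\]
where $G=\pi_{1}(X(\A))$. Fix now any integer $i$ with $1\le i<\rk(\A)$. For every $j\le i$ we have $j\le i<\rk(\A)$, so the display above shows that $H_{j}(X(\A),\widehat{\Z G}_{-\chi})$ vanishes in every degree $j\le i$. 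Theorem~\ref{thm:bns novikov} is precisely the statement that this vanishing is equivalent to $\chi\in\Sigma^{i}(X(\A),\Z)=\Sigma^{i}(\A)$, which gives the first assertion. The ``in particular'' clause is the case $i=1$; in the degenerate rank-$1$ case this still holds, as $X(\A)\simeq S^{1}$ then has $\Sigma^{1}(\A)=S(\A)$.

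I do not anticipate a genuine obstacle, since both named theorems do all the real work. The only point requiring care is the sign convention: the Novikov completion in Theorem~\ref{thm:bns novikov} is indexed by $-\chi$, which is precisely why the \emph{positive}-octant hypothesis in Theorem~\ref{thm:kp} produces the \emph{negative} octant $S^{-}(\A)$ in the conclusion of Corollary~\ref{cor:kp bound}.
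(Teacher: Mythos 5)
Your proposal is correct and is essentially the paper's own argument: the paper likewise combines Theorem~\ref{thm:kp} with the Novikov-homology characterization of Theorem~\ref{thm:bns novikov}, phrased with the positive vector $\chi$ and the conclusion $-\chi\in\Sigma^i(\A)$, which is your argument up to relabeling. Your sign bookkeeping and the remark on the degenerate rank-one case are both fine.
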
 

The above lower bound for the BNS invariant of an 
arrangement $\A$ can be improved quite a bit, by 
considering the projectivized arrangement $\overline{\A}$.  
Set $n=\abs{\A}$, and identify the unit sphere 
$S(\overline{\A})=S(H^1(X(\overline{\A}),\R))$ with the 
great sphere $S^{n-2}=\{\chi \in S^{n-1} \mid \sum_{i=1}^n \chi_j =0\}$ 
inside $S^{n-1}=S(\A)$.  Clearly, 
$S^{-}(\A)\subset S(\A)\setminus S(\overline{\A})$. 

\begin{prop}
\label{prop:cone decone}
For any central arrangement $\A$, 
\begin{equation}
\label{eq:sigma1 arr}
\Sigma^1(\A)=S(\A)\setminus 
(S(\overline{\A})\setminus \Sigma^{1}(\overline{\A})). 
\end{equation}
In particular, $S(\A)\setminus S(\overline{\A}) \subseteq \Sigma^1(\A)$. 
\end{prop}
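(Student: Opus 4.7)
The plan is to invoke the product formula \eqref{eq:sigmaprod2} for the BNS invariants of a direct product, applied to the splitting $X(\A)=X(\overline{\A}) \times \C^{\times}$ recalled in Subsection~\ref{subsec:arr stuff}.

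First I would record the translations. The splitting induces $\pi_1(X(\A)) \cong \pi_1(X(\overline{\A})) \times \Z$, and since the universal cover is always simply connected, Proposition~\ref{prop:sigma xg} lets us identify $\Sigma^1(\A)$ with the group-theoretic invariant of $\pi_1(X(\A))$, and similarly for $\overline{\A}$.  Under the ensuing identification $S(\A) = S(\overline{\A}) * S^0$, the equatorial subsphere $S(\overline{\A})$ consists of exactly those characters $\chi$ that annihilate the image of $H_1(\C^{\times},\Z)$ in $H_1(X(\A),\Z)$; since $H^1(\C^{\times},\Z)$ is generated by the class $\sum_{H\in \A} e_H$, this image is the $\Z$-span of $(1,\dots,1)$, so $S(\overline{\A})$ is cut out by $\sum_{H\in \A}\chi_H = 0$.

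Next I would verify that $\Sigma^1(\Z,\Z) = S(\Z) = S^0$: for any non-zero $\chi\colon \Z\to \R$ the kernel is trivial, so the monoid $\Z_\chi$ is generated by a single element and both $\pm\chi$ lie in $\Sigma^1(\Z,\Z)$; in particular $\Sigma^1(\Z,\Z)^{\compl} = \emptyset$.  Applying \eqref{eq:sigmaprod2} with $G_1=\pi_1(X(\overline{\A}))$ and $G_2=\Z$ then yields
\[
\Sigma^1(\A)^{\compl} \;=\; \Sigma^1(\overline{\A})^{\compl} \cup \Sigma^1(\Z,\Z)^{\compl} \;=\; \Sigma^1(\overline{\A})^{\compl},
\]
where, using the convention $A*\emptyset=A$ noted after \eqref{eq:sigmaprod1}, the right-hand side is interpreted as a subset of the equatorial subsphere $S(\overline{\A})\subset S(\A)$.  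Taking complements in $S(\A)$ produces \eqref{eq:sigma1 arr}.  The ``In particular'' clause then follows immediately: $S(\overline{\A}) \setminus \Sigma^1(\overline{\A})$ sits inside $S(\overline{\A})$ and is therefore disjoint from $S(\A) \setminus S(\overline{\A})$, forcing the latter to lie in $\Sigma^1(\A)$.

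The principal obstacle is really just the join bookkeeping: one must confirm that the convention $A*\emptyset=A$ places $\Sigma^1(\overline{\A})^{\compl}$ precisely on the equatorial $S(\overline{\A})$, and not as a larger cone-like subset of $S(\A)$.  Once that is settled, the result is a direct application of the product formula \eqref{eq:sigmaprod2} and requires nothing more delicate.
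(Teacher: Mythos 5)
Your proposal is correct and follows the same route as the paper: the paper's proof consists precisely of applying the product formula \eqref{eq:sigmaprod2} to the splitting $X(\A)=X(\overline{\A})\times\C^{\times}$ to get $\Sigma^1(\A)^{\compl}=\Sigma^1(\overline{\A})^{\compl}$, from which the statement follows. Your additional verifications (that $\Sigma^1(\Z,\Z)=S^0$ and that the convention $A*\emptyset=A$ places $\Sigma^1(\overline{\A})^{\compl}$ on the equatorial subsphere cut out by $\sum_{H\in\A}\chi_H=0$) are exactly the details the paper leaves implicit.
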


\begin{proof}
By the product formula \eqref{eq:sigmaprod2} for the 
BNS invariants, we have that 
$\Sigma^1(\A)^{\compl}=\Sigma^1(\overline{\A})^{\compl}$. 
The desired conclusion follows.
\end{proof}

\subsection{Discussion and examples}
\label{subsec:disc arr}
In simple situations, the Dwyer--Fried and Bieri--Neumann--Strebel 
invariants of an arrangement can be computed explicitly, and the 
answers agree with those predicted by the upper bounds from 
\S\ref{subsec:omsig arr}.  

\begin{example}
\label{ex:pencil}
Let $\A$ be a pencil of $n\ge 3$ lines through the 
origin of $\C^2$.  Then $X(\A)$ is diffeomorphic to 
$\C^{\times} \times (\C \setminus \set{n-1 \text{ points}})$, 
which in turn is homotopy equivalent to the toric complex $T_L$, 
where $L=K_{1,n-1}$ is the bipartite graph obtained by 
coning a discrete set of $n-1$ points.  Thus, the $\Omega$- 
and $\Sigma$-invariants of $\A$ can be computed using 
the formulas from \S\ref{sect:toric}.   For instance, 
$\RR^1(\A)=\C^{n-1}\subset \C^{n}$.  Therefore,  
$\Omega^1_1(\A)=\QP^{n-1} \setminus \QP^{n-2}$ and 
$\Omega^1_2(\A)=\emptyset$.  Moreover,  
$\Sigma^1(\A)=S^{n-1} \setminus S^{n-2}$, 
which is the same as the lower bound from 
Proposition \ref{prop:cone decone}. 
\end{example}

For arbitrary arrangements, the computation of the 
$\Omega$- and $\Sigma$-invariants is far from being 
done, even in degree $i=1$. 
A more detailed analysis of the $\Omega$-invariants of 
arrangements is given in \cite{Su-aspm, Su11}.  Here is a 
sample result. 

\begin{prop}[\cite{Su-aspm}]
\label{prop:df 12}
Let $\A$ be an arrangement of $n$ lines in $\C^{2}$. 
Suppose $\A$ has $1$ or $2$ lines which contain all 
the intersection points of multiplicity $3$ and higher.  Then 
$\Omega^1_r(\A) =\Grass_r(\Q^n)\setminus  
\sigma_r(\RR^1(X(\A),\Q))$, 
for all $r\ge 1$.
\end{prop}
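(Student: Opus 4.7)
The plan is to derive the proposition from the stronger assertion that, under the stated combinatorial hypothesis, the complement $X(\A)$ is in fact \emph{$1$-straight}. Since every arrangement complement is $1$-formal (by Brieskorn--Orlik--Solomon) and locally $1$-straight (as recalled just before \eqref{eq:dfarr bound}), Corollary \ref{cor:straight qp} says that $1$-straightness is equivalent to the non-existence of positive-dimensional translated subtori in $\WW^1(X(\A))$. Once this is established, Theorem \ref{thm:df straight}\eqref{s2} (equivalently Corollary \ref{cor:df 1fqp}\eqref{qk2}) produces the desired equality $\Omega^1_r(\A) = \Grass_r(\Q^n) \setminus \sigma_r(\RR^1(X(\A),\Q))$ for every $r\ge 1$. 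So the whole task reduces to the following statement: under the hypothesis, every positive-dimensional component of $\VV^1(X(\A))$ passes through the identity.

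I would prove this by means of Theorem \ref{thm:arapura}\eqref{a2}, which says that any positive-dimensional component of $\VV^1(X(\A))$ has the form $\rho\cdot T$ with $\rho$ a torsion character and $T=f^*(H^1(C,\C^{\times}))$ for some surjective holomorphic map $f\colon X(\A)\to C$ onto a connected curve with $\chi(C)\le 0$; moreover, the component is genuinely translated (i.e.\ does not pass through $1$) precisely when $f$ admits a multiple fiber. After compactifying and projectivizing, such a map corresponds to a \emph{multinet} on the projectivized arrangement $\overline{\A}$ in the sense of Falk--Yuzvinsky: a partition of the lines of $\overline{\A}$ into three or more classes of equal total multiplicity, subject to precise incidence conditions at the base locus of the pencil. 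The goal is to show the hypothesis forbids any multinet giving rise to a multiple-fiber pencil.

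I would split the analysis into the two cases permitted by the hypothesis. In Case~1, where a single line $\ell_0$ carries every intersection of multiplicity $\ge 3$, the arrangement is fiber-type with respect to $\ell_0$: $\ell_0$ is modular in $L(\overline{\A})$, and the Falk--Randell construction realizes $X(\A)$ as the total space of a locally trivial fiber bundle whose base and fiber are both complements of finitely many points in $\C$. All surjections from $X(\A)$ onto curves of non-positive Euler characteristic factor through this bundle projection, and no multiple fibers occur, so every positive-dimensional component of $\VV^1$ is a subtorus through $1$. In Case~2, with two lines $\ell_1,\ell_2$ jointly carrying all multiple points of multiplicity $\ge 3$ and $\A' = \A\setminus\{\ell_1,\ell_2\}$ having only ordinary double points off $\ell_1\cup\ell_2$, I would enumerate the placements of $\ell_1$ and $\ell_2$ inside a hypothetical multinet (same class, different classes, or absent from the multinet), and in each sub-case use the multinet axiom that every base point is incident to equally many lines from each class to extract a point of multiplicity $\ge 3$ on some line of $\A'$ lying outside $\ell_1\cup\ell_2$, contradicting the hypothesis.

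The main obstacle is the combinatorial bookkeeping in Case~2: one must genuinely exclude \emph{every} multinet whose associated pencil has a multiple fiber. The difficulty is sharpened by the fact that translated components really do occur in slightly more permissive combinatorial settings, such as the deleted $B_3$ arrangement alluded to in Example~\ref{ex:deleted B3}, so the ``only one or two special lines'' restriction is used essentially. Case~1 is by comparison routine, since supersolvability (via $\ell_0$) already forces all positive-dimensional components of $\VV^1$ through $1$; the delicate work is the multinet enumeration of Case~2, after which Corollary~\ref{cor:straight qp} and Theorem \ref{thm:df straight}\eqref{s2} complete the proof.
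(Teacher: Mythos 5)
Your overall reduction is exactly the paper's: show that $\WW^1(X(\A))$ contains no positive-dimensional translated subtori, conclude via Corollary \ref{cor:straight qp} that $X(\A)$ is $1$-straight, and then apply Theorem \ref{thm:df straight}\eqref{s2}. The paper, however, obtains the key step by citing the theorem of Nazir and Raza (arrangements with all points of multiplicity $\ge 3$ on at most two lines admit no translated components), whereas you attempt to prove that step yourself, and this is where the genuine gaps are.

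The central problem is that you are excluding the wrong combinatorial objects. Multinets in the Falk--Yuzvinsky sense classify the essential components of $\RR^1(\A)$, equivalently the positive-dimensional components of $\VV^1(\A)$ passing \emph{through} the identity: they encode pencils with at least three completely reducible fibers fully supported on $\overline{\A}$. A translated component $\rho\cdot T$ arises instead from a pencil with a multiple fiber whose special fibers need \emph{not} be supported on the arrangement (in the deleted $\operatorname{B}_3$ case the pencil maps onto $\CP^1$ minus two points and $\overline{\A}$ carries only part of each special fiber), so ruling out multinets satisfying the equal-incidence axiom does not rule out translated components. The paper itself notes in \S\ref{subsec:cjl arr} that it is still unknown whether translated components are combinatorially determined, which already signals that a naive multinet enumeration cannot close Case~2. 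Your Case~1 is also unsound as stated: an arrangement with a single triple point $p$ on $\ell_0$ plus otherwise generic lines (or a generic arrangement, which satisfies the hypothesis vacuously) has no modular point and is not fiber-type; and even for genuinely fiber-type arrangements such as the braid arrangement, not every pencil onto a curve of negative Euler characteristic factors through the bundle projection---the non-local braid component comes from a Ceva-type pencil that does not. If you want to avoid citing Nazir--Raza, you must reproduce their actual argument, which analyzes arbitrary pencils (not multinets) on such arrangements and shows directly that every positive-dimensional component of $\VV^1(\A)$ passes through $1$.
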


The reason is that, by a result of Nazir and Raza, the 
first characteristic variety of such an arrangement 
has no translated components, and so $X(\A)$ is $1$-straight. 
In general, though, translated tori in the characteristic variety 
may affect both the $\Omega$-sets and the $\Sigma$-sets 
of the arrangement.

\begin{example}
\label{ex:deleted B3}
Let $\A$ be the deleted $\operatorname{B}_3$ 
arrangement, with defining polynomial 
$Q(\A)=z_0z_1(z_0^2-z_1^2)(z_0^2-z_2^2)(z_1^2-z_2^2)$. 
The  jump loci of this arrangement were computed 
in \cite{Su02}.  The resonance variety 
$\RR^1(\A)\subset \C^8$ contains $7$ 
local components, corresponding to $6$ triple points and one 
quadruple point, and $5$ other components, corresponding 
to braid sub-arrangements.  In particular, $\codim \RR^1(\A)=5$.  
In addition to the $12$ subtori arising from the subspaces  
in $\RR^1(\A)$, the characteristic variety $\VV^1(\A)\subset 
(\C^{\times})^8$ also contains a component of the form 
$\rho\cdot T$, where $T$ is a $1$-dimensional algebraic 
subtorus, and $\rho$ is a root of unity of order $2$.  

Of course, the complement of $\A$ is a formal, 
smooth, quasi-projective variety.  From Theorem \ref{thm:tt}, 
we deduce that the Dwyer--Fried set $\Omega^1_2(\A)$ 
is strictly contained in $\sigma_2(\RR^1(X(\A),\Q))^{\compl}$. 
Using Corollary \ref{cor:dfarr equal}, we conclude that the BNS set 
$\Sigma^1(\A)$ is strictly contained in $S(\RR^1(X(\A),\R))^{\compl}$.  
\end{example}

This example answers in the negative Question 9.18(ii) 
from \cite{Su-conm}.  It would be interesting to compute 
explicitly the $\Omega$-invariants and $\Sigma$-invariants 
of wider classes of arrangements, and see whether these 
invariants depend only on the intersection lattice, or also 
on other, more subtle data. 

\section{Acknowledgements}
\label{sec:ack}

A preliminary version of this paper was presented at the 
Centro di Ricerca Matematica Ennio De Giorgi in Pisa, 
Italy, in May--June, 2010.  I  wish to thank the organizers of 
the Intensive Research Period on {\em Configuration Spaces: 
Geometry, Combinatorics and Topology}\/ for their friendly 
hospitality.

Most of the work was done during the author's visit at the 
Universit\'{e} de Caen, France in June, 2011. Likewise, 
I wish to thank the Laboratoire de Math\'ematiques 
Nicolas Oresme for its support and hospitality. 

Finally, I wish to thank the referee for helpful comments 
and suggestions.

\newcommand{\arxiv}[1]
{\texttt{\href{http://arxiv.org/abs/#1}{arXiv:#1}}}
\newcommand{\arx}[1]
{\texttt{\href{http://arxiv.org/abs/#1}{arXiv:}}
\texttt{\href{http://arxiv.org/abs/#1}{#1}}}
\newcommand{\doi}[1]
{\texttt{\href{http://dx.doi.org/#1}{doi:#1}}}
\renewcommand{\MR}[1]
{\href{http://www.ams.org/mathscinet-getitem?mr=#1}{MR#1}}

\end{document}